\setlist[enumerate]{leftmargin=.5in}
\setlist[itemize]{leftmargin=.5in}
\DeclareMathAlphabet{\mathb}{OML}{cmm}{b}{it}
\newcommand{\N}{\mathbb{N}}		
\newcommand{\R}{\mathbb{R}}		
\renewcommand{\H}{\mathcal{H}}
\newcommand{\X}{\mathcal{X}}
\newcommand\innerprod[2]{\left\langle #1\,|\, #2 \right\rangle}
\newcommand{\fonc}[3]{#1:  #2  \rightarrow  #3}					
\newcommand{\syst}[1]{\left \{ \begin{array}{l} #1 \end{array} \right. \kern-\nulldelimiterspace}	
\newcommand{\prox}{\text{\normalfont prox}}
\newcommand{\dist}{\operatorname{dist}}
\DeclareMathOperator*{\argmin}{argmin}
\newcommand{\dom}{\text{\normalfont dom}\,}
\newcommand{\minimize}[2]{\ensuremath{\underset{\substack{{#1}}}{\mathrm{minimize}}\;\;#2 }}
\newcommand{\proj}{\ensuremath{\text{\rm proj}}}
\newlength{\algorithmboxrule}
\newcommand{\algorithmboxcolor}{white}
\xpatchcmd*{\algocf@caption@boxruled}{0.0pt}{2\algorithmboxrule}{}{}
\xpatchcmd*{\algocf@caption@boxruled}{\vrule}{\vrule width \algorithmboxrule}{}{}
\xpatchcmd{\algocf@caption@boxruled}{\hrule}{\hrule height \algorithmboxrule}{}{}
\xpretocmd{\algocf@caption@boxruled}{\color{\algorithmboxcolor}}{}{}
\newtheorem{proposition}{Proposition}
\newtheorem{lemma}{Lemma}
\newtheorem{theorem}{Theorem}
\newtheorem{corollary}{Corollary}
\newtheorem{example}{Example}
\theoremstyle{definition}
\newtheorem{definition}{Definition}
\newtheorem{remark}{Remark}
\newtheorem{assumption}{Assumption}
\definecolor{shadecolor}{rgb}{0.8,0.8,0.8}
\title{Forward-Backward algorithms for weakly convex problems}
\author{Ewa Bednarczuk\thanks{Warsaw University of Technology,  00-662 Warsaw, Koszykowa 75, Poland} \thanks{Systems Research Institute, PAS,  01-447 Warsaw, Newelska 6, Poland}
\and Giovanni Bruccola\footnotemark[2] \and Gabriele Scrivanti\thanks{Université Paris-Saclay, Inria, CentraleSupélec, CVN, 3 Rue Joliot Curie, 91190, Gif-Sur-Yvette, France.} 
\and The Hung Tran\footnotemark[2]}
\date{}
\begin{document}
\maketitle

\begin{abstract} 
We investigate the convergence properties of exact and inexact forward-backward algorithms to minimise the sum of two weakly convex functions defined on a Hilbert space, where one has a Lipschitz-continuous gradient. We show that the exact forward-backward algorithm converges strongly to a global solution, provided that the objective function satisfies a sharpness condition. For the inexact forward-backward algorithm, the same condition ensures that the distance from the iterates to the solution set approaches a positive threshold depending on the accuracy level of the proximal computations. As an application of the considered setting, we provide numerical experiments related to  discrete tomography.
\end{abstract}
\vspace{0.4cm}

\textbf{Keywords:} {
weakly convex functions, sharpness condition, forward-backward algorithm, inexact forward-backward algorithm, $\rho$-criticality, proximal subgradient, proximal operator}

\textbf{MSC codes:}
90C30, 90C26,  90C51, 65K10, 52A01

\section{Scope of this Work}

Given a Hilbert space $\X$, we consider a problem of the form
\begin{equation}
\label{eq: ourproblem}
    \minimize{x\in\X}{f(x) + g(x)}
\end{equation}
where function $\fonc{f}{\X}{(-\infty,+\infty]}$ is weakly convex with modulus $\rho_f\geq 0$ ($\rho_f$-weakly convex) and ${\fonc{g}{\X}{\R}}$ is convex and {Fr\'echet} differentiable with a $L_g$-Lipschitz continuous {(Fr\'echet) gradient $\nabla g$}. 

We highlight that assuming convexity of $g$ in \eqref{eq: ourproblem} is not restrictive: 
whenever a  problem is to minimize the sum of a function $\fonc{F}{\X}{(-\infty,+\infty]}$ that is $\rho_F$-weakly convex and a function ${\fonc{G}{\X}{\R}}$ that is {Fr\'echet} 
 differentiable with a $L_G$-Lipschitz continuous (Fr\'echet) gradient $\nabla G$, 
 we can 
 adopt the following reformulation
 \begin{equation}
\label{eq: ourproblem0}
\begin{split}
 \minimize{x\in\X}  \underbrace{ {\left(F(x) -\frac{L_G}{2}\|x\|^2\right)}}_{f(x)}+\underbrace{\left(\frac{L_G}{2}\|x\|^2+\ G(x)\right)}_{g(x)},
\end{split}
\end{equation}
where $\fonc{f}{\X}{(-\infty,+\infty]}$ is $\rho_f$-weakly convex  with $\rho_f=\left(\rho_F+L_G\right)$, $\fonc{ g}{\X}{\R}$ is convex and differentiable with a $L_g$-Lipschitz continuous gradient with $L_g=2L_G$. This stems from the fact that function $G$ is  $L_G$-weakly convex (see \Cref{prop:subdiff}-c). 

 From a theoretical viewpoint, weakly convex functions have been investigated on different levels of generality, \emph{e.g.\ } \cite{jourani1996Subdifferentiability, rolewicz1993globalization, vial1983strong}. In applications, weakly convex functions appear \emph{e.g.\ }in robust phase retrieval and matrix factorization. More examples can be found in  \cite{Davis2018SubgradientMethodsSharpWeaklyConvex}.  Structured problems of the form presented in \eqref{eq: ourproblem} frequently appear in variational sparse signal and image processing, where function $g$ corresponds to the data-fidelity term and function $f$ is a weakly convex penalty function approximating the $\ell_0$ pseudonorm \cite{fan2001variable,
soubies2015continuous,zhang2010nearly}.
In the finite-dimensional setting, there exist algorithms which are devoted to solving problems that involve weakly convex functions, see \emph{e.g.\ }\cite{Atenas2023Unified, bayram2015convergence, davis2019stochastic,Davis2018SubgradientMethodsSharpWeaklyConvex}. {In particular, the authors in \cite{bayram2015convergence} analyse the convergence of the FB splitting under the assumption that the smooth function $g$ is $\rho$-strongly convex, implying that the overall objective $f+g$ is convex.}

The main scope of this paper is to analyse the convergence properties of the  following inexact forward-backward splitting {when applied to problem \eqref{eq: ourproblem}}, with step size $\alpha_t>0$ and accuracy level $\varepsilon_t\ge0$
     \begin{equation}
    \label{eq: FB upate : 2}
    x_{t+1} \in \varepsilon_{t}\text{-}\prox_{\alpha_t f}(x_t - \alpha_t \nabla g(x_t)).
    \end{equation}
    
{In order to investigate the strong convergence of the scheme  in  \eqref{eq: 
FB upate : 2} (see \Cref{theo:Et constant convergence} and \Cref{prop:convergence_convex:new}), we exploit a sharpness condition (see \Cref{def:sharpness} below}). There has been a rising number of works which use sharpness to discuss the global convergence of splitting algorithms, such as \cite{Li2021_WeaklyCOnvexOptimization,li2019incremental}. 
We take into account a possibly inexact proximal step for the weakly convex function (the exact proximal step is considered in {\cite[Section 5]{bohm2021variable}} to infer a complexity bound for the exact forward-backward algorithm in a similar framework). {We use the concept of $\varepsilon$-proximal operator introduced in \cite{bednarczuk2022calculus} (see \Cref{def:eps_prox} below). Its properties, expressed in terms of global proximal ${\varepsilon\text{-subdifferential}}$s, are investigated in {\cite[Proposition 4, Proposition 5]{bednarczuk2022calculus}}, where a comparison with other concepts of $\varepsilon$-proximal operator in the convex case is provided.
The concept of an inexact proximal operator and its computational tractability lie at the core of many proximal methods referring to fully convex case, }where both $f$ and $g$ are convex \cite{barre2022principled,Millan2019InexactProximal,  salzo2012inexact}.\\

The main contribution of this work can be summarised in the following points.   \begin{enumerate}
 \item We exploit the proximal subdifferentials -- which are defined globally for the class of weakly convex functions -- to analyse the convergence of \eqref{eq: FB upate : 2} to global minima for the exact case ($\varepsilon=0$) and, for the inexact case, with a constant inexactness level ($\varepsilon>0$). 

 \item The convergence  properties of \eqref{eq: FB upate : 2} to global minima, are investigated  under the assumption that the objective function is sharp, Definition \ref{def:sharpness}. An important consequence of  sharpness of the objective  is that,  in the exact case, the iterates converge to global solution  of \eqref{eq: ourproblem} provided the starting point is sufficiently close to the solution set, Proposition \ref{prop:convergence_convex:new}.
\item We provide a supporting application on binary tomography to demonstrate the performance of  \eqref{eq: FB upate : 2}.\\

\end{enumerate}

{All the results are formulated in a Hilbert space. However, we are not using any dimensionality-dependent arguments.  The structure of the underlying space is exploited exclusively via calculus rules of the  inner product and the respective norm. Consequently, the results remain valid without any change in  Euclidean spaces. For applications of the considered setting in Hilbert spaces, see e.g. \cite{azmi2023nonmonotone}.}

\subsection{Related Works}
\paragraph{The FB Algorithm}The FB algorithm (or proximal gradient algorithm) belongs to the class of splitting methods \cite{han1988parallel,lions1979splitting} whose aim is to minimise the sum of a smooth function and a non-smooth one. 
This kind of algorithm has been well studied and understood in Hilbert spaces for the convex case \cite{han1988parallel, combettes2005signal}. 
Many variants of FB have been proposed recently \emph{e.g.\ }inertial FB \cite{bello2022fista,lorenz2015inertial,bonettini2023abstract}, to accelerate the algorithm, variable metric FB  \cite{combettes2014variable} to improve convergence, or FB in Banach spaces, \cite{guan2022forward}.

When the convexity hypothesis is relaxed, two main drawbacks arise. Firstly, the convergence to global minimiser cannot be easily guaranteed any longer. Secondly, the lack of convexity of the non-smooth function invalidates the fact that the corresponding proximal operator is single-valued.
 In particular, in \cite{Bonettini_2017}, a line search-based inexact proximal gradient scheme is designed for problems where function $g$ is not necessarily convex, whereas $f$ is convex and its proximal map can be approximated using the strategy proposed in \cite{salzo2012inexact}. Therefore, we highlight that in 
\Cref{alg:epsFB} we take the proximal step on the weakly convex function $f$, while the function $g$ is Fr\'echet differentiable with a $L_g$-Lipschitz continuous gradient and also convex, by the manipulation in \eqref{eq: ourproblem0}.\\

\paragraph{Convergence under error bound conditions} In the general non-convex case, the seminal works \cite{attouch2013convergence, liu2017further} illustrate that the convergence to a local minimum can be ensured provided that the objective function satisfies the Kurdyka-{\L}ojasiewicz (KL) inequality \cite{Kurdyka1998GradientsOfFunctions,Lojasiewicz1963ProprieteTopologique,Lojasiewicz1993GeometrieSemiEtSousAnalytique}. 
 The attractiveness of the KL inequality comes from the fact that, in finite-dimensional spaces, it is satisfied by, among others, subanalytic and semialgebraic functions, which are wide classes covering both convex and non-convex functions appearing in most applications.
  Given its considerable impact on several fields of applied mathematics, the authors in \cite{bolte2010characterizations} proposed a characterisation of the KL property in a non-smooth and infinite dimensional setting.

The KL inequality is a sufficient condition for non-linear error bounds conditions, in the sense of 
sufficient (and necessary) conditions for error bounds are discussed in many papers, see e.g. \cite{cuong2022error}, or 
 \cite{Bai2022Equivalence}  in the finite-dimensional case.

In a recent work \cite{Atenas2023Unified}, the authors assume the subdifferential error bound to obtain the convergence of a FB scheme for the sum of a convex and a smooth functions to a stationary point.
Meanwhile, in our setting of a general Hilbert space, we consider the sharpness condition with respect to the set of global minimisers of the sum of a weakly convex and a smooth function. Many innovative contributions of forward-backward splitting methods in non-convex settings, such as \cite{attouch2013convergence, themelis2018forward, li2015global}, study global convergence to a critical point of $f+g$ by using limiting subdifferentials. However, it has already been observed that proximal subgradient is a suitably powerful tool for analysing non-differentiable weakly convex functions \cite{clarke2008nonsmooth,RockWets1998Variational,jourani1996Subdifferentiability}.\\

\paragraph{Proximal Subdifferential}  
     Focusing on the class of weakly convex functions allows us to adopt a specific form of the general Fréchet subdifferential, namely
     \textit{proximal subdifferential},  
     see {\em e.g.}\  in the finite-dimensional case, the monograph by Rockafellar and Wets \cite{RockWets1998Variational}, in Hilbert spaces the work by Bernard and Thibault \cite{Bernard2005_ProxRegular} and the book by Clarke \textit{et al.}\ \cite{clarke2008nonsmooth}. 
    In particular, we make use of the fact that weakly convex functions 
    enjoy a so-called \emph{globalisation property} \cite{jourani1996Subdifferentiability, rolewicz1993globalization}, which states that the proximal subgradient inequality holds globally (see \Cref{def:paraconvexity} and \Cref{def:eps:prox_subdiff}). 
    \\

\paragraph{Outline} The work is organised as follows.
In \Cref{sec:Preliminaries}, we provide some preliminary facts on weakly convex functions and proximal $\varepsilon$-subdifferentials. In \Cref{sec:inexact}, we show the relation between proximal operators and proximal subdifferentials of weakly convex functions. In \Cref{sec:sharpness}, we introduce the concept of sharpness and investigate its relation with critical points and with points satisfying the KL property for weakly convex functions. In 
    \Cref{sec:inexact FB convergence} we present in \Cref{alg:epsFB} the inexact FB algorithm and prove the descending behaviour of the objective values for iterates generated by \Cref{alg:epsFB}. In \Cref{sec:convergence:analysis}, we show the monotonic decrease of the distance from the iterates to the solution set in the case of inexact proximal computations (see \Cref{theo:Et constant convergence}) and, in \Cref{sec:convergence:exact}, we show the local strong convergence to a global solution of the whole sequence with linear rate in the case of exact proximal computations (see \Cref{prop:convergence_convex:new}).
In \Cref{sec:feasibility}, we apply the considered setting to model binary-constrained problems and report on numerical simulations for the proposed FB scheme in the context of binary tomography.

\section{Preliminaries. Basic Definitions and Notation}\label{sec:Preliminaries}

{ \sloppy In this work, the notation $B(x,\delta)$ denotes the open ball with center in $x\in\X$ and radius $\delta>0$ ($\overline B(x,\delta)$ denotes its closure). For any 
set $C\subset \X$, and  $\delta>0$, we have $B(C,\delta) = \bigcup_{x\in C} B(x,\delta)$ and $\overline{B}(C,\delta)$ its closure. For a function ${\fonc{h}{\X}{\R}}$ and given real values $a, b \in \R$ we use the notation $[ a \le h \le b]$ to denote the level set ${[ a \le h \le b] = \{x\in\X \mid a\leq  h(x) \leq b\} }$. For a function ${\fonc{h}{\X}{(-\infty,+\infty]}}$, the effective domain, $\dom h$,  is defined as ${\dom h := \{x\in \X\,|\, h(x) < +\infty\}}$ and  we  say that $h$ is \textit{proper} whenever $\dom h \neq \emptyset $. For a set set-valued mapping $M: \X \rightrightarrows \X$, 
   ${ \dom M := \{x\in \X\,|\, M(x) \neq \emptyset\}.}$\\
}

 We start with the  definition of $\rho$-weak convexity (also known as $\rho$-paraconvexity, see \cite{jourani1996Subdifferentiability,Rolewicz1979paraconvex} or $\rho$-semiconvexity, see \cite{cannarsa2004semiconcave}).

\begin{definition}[\textit{$\rho$-weak convexity}]\label{def:paraconvexity}
Let $\mathcal{X}$ be a Hilbert space. A function $\fonc{h}{\mathcal{X}}{(-\infty,+\infty]}$ is 
$\rho$-weakly convex if there exists a constant $\rho\ge0$ such that for $\lambda\in[0,1]$ the following inequality holds:
\begin{equation}
 (\forall (x,y)\in \mathcal{X}^2) \quad h(\lambda x + (1-\lambda)y) \leq  \lambda h(x) + (1-\lambda) h(y) + \lambda(1-\lambda)\frac{\rho}{2}\|x-y\|^{2}.
\end{equation}
We refer to $\rho$ as the \textit{modulus of weak convexity} of the function $h$. 
\end{definition}

{According to {\cite[Proposition 1.1.3]{cannarsa2004semiconcave}}, when $\mathcal{X}$ is a Hilbert space, a $\rho$-weakly convex function $\fonc{h}{\mathcal{X}}{(-\infty,+\infty]}$  is equivalently characterised by the fact that $h + \frac{\rho}{2}\|\cdot\|^2$ is a convex function.}
 Further characterisations of weak convexity can be found in \cite{bednarczuk2022calculus} and the references therein.

{\begin{example}\label{ex: weak convexity}
   \sloppy The $\fonc{f}{\R}{(-\infty,+\infty)}$  defined as $f(x): = |(x-a)(x-b)|$, where ${(a,b)\in\{(a,b)\in \R^2\,|\,a<b\}}$, is $\rho$-weakly convex with $\rho=2.$ 
 For every $x\in \R$, it is easy to verify that ${f(x)+ x^2 = \max\{2x^2 -(a+b)x + ab, (a+b)x - ab\}}$
is the point-wise maximum of convex functions; hence, it is convex itself. 
\end{example}

Function $f$ from \Cref{ex: weak convexity}, 
despite its simplicity,  
can be used 
in various applications, {\em e.g.} in
 variational problems as 
a penalty function to promote integer solutions, 
 see \Cref{sec:feasibility}.

{
Below, we summarize some basic  facts 
on weak convexity of Fr\'echet differentiable functions.
\begin{proposition} 
\label{prop:subdiff}
Let $\X$ be a Hilbert space and let $h:\X\rightarrow(-\infty,+\infty]$ be  Fr\'echet differentiable on an open and convex set $U\subset\X$. Then, the following holds:

\begin{enumerate}[label=(\alph*)]
\item The following facts are equivalent:
\begin{enumerate}[label=(a.\roman*)]
\item   $h$ is $\rho$-weakly convex on $U$;
\item  for every $x\in U$
\begin{equation} 
\label{eq:descent_ineq}
(\forall\ y\in \X) \quad h(y)\ge h(x)+\langle \nabla h(x)\ |\ y-x\rangle-\frac{\rho}{2}\|x-y\|^{2}.
\end{equation}
\end{enumerate}
\item Let $\nabla h$ be $L_h$-Lipschitz continuous on $U$. Let $x$ and $y$ be in $U$. The following holds.
\begin{enumerate}[label=(b.\roman*)]
\item $|h(x) - h(y) - \langle x-y|\nabla h(y)\rangle |\leq \frac{L_h}{2}\|x-y\|^2$.
    \item $|\langle x-y|\nabla h(x)-\nabla h(y)\rangle |\leq {L_h}\|x-y\|^2$.
\end{enumerate}
\item Let $\nabla h$ be $L_h$-Lipschitz continuous on $\X$. Then $h$ is $L_{h}$-weakly convex on $\X$.
\end{enumerate}
\end{proposition}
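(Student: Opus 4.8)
The plan is to handle the three items in turn, with item (a) furnishing the conceptual backbone that item (c) reuses.

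For (a), I would reduce weak convexity to ordinary convexity through the auxiliary function $\phi := h + \tfrac{\rho}{2}\|\cdot\|^2$. By the characterisation recalled right after \Cref{def:paraconvexity}, statement (a.i) is equivalent to convexity of $\phi$ on $U$. Since $h$ is Fréchet differentiable on $U$, so is $\phi$, with $\nabla\phi(x) = \nabla h(x) + \rho x$. For a differentiable function on an open convex set, convexity is equivalent to the first-order (gradient) inequality $\phi(y) \ge \phi(x) + \langle \nabla\phi(x)\,|\,y-x\rangle$. The remaining step is purely algebraic: substitute $\phi = h + \tfrac{\rho}{2}\|\cdot\|^2$ and $\nabla\phi(x) = \nabla h(x) + \rho x$ into this inequality and collect the quadratic terms via the identity $\tfrac{\rho}{2}\|y\|^2 - \tfrac{\rho}{2}\|x\|^2 - \rho\langle x\,|\,y-x\rangle = \tfrac{\rho}{2}\|x-y\|^2$, which converts the gradient inequality for $\phi$ into exactly \eqref{eq:descent_ineq}. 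Reading the argument in both directions gives (a.i) $\Leftrightarrow$ (a.ii).

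For (b.i), I would integrate the gradient along the segment $[y,x]$, which lies in $U$ because $U$ is convex. Setting $\psi(t) := h(y + t(x-y))$, the chain rule gives $\psi'(t) = \langle \nabla h(y+t(x-y))\,|\,x-y\rangle$, so that $h(x) - h(y) - \langle x-y\,|\,\nabla h(y)\rangle = \int_0^1 \langle \nabla h(y+t(x-y)) - \nabla h(y)\,|\,x-y\rangle\,dt$. Bounding the integrand by Cauchy--Schwarz and then by $L_h$-Lipschitz continuity of $\nabla h$ (which contributes a factor $t\|x-y\|$), and using $\int_0^1 t\,dt = \tfrac12$, yields the stated bound on the absolute value. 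Item (b.ii) is immediate: apply Cauchy--Schwarz to $\langle x-y\,|\,\nabla h(x) - \nabla h(y)\rangle$ and then Lipschitz continuity of $\nabla h$.

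Finally, (c) follows by specialising (b.i) to $U = \X$ and combining it with (a). Indeed, (b.i) gives $h(x) \ge h(y) + \langle \nabla h(y)\,|\,x-y\rangle - \tfrac{L_h}{2}\|x-y\|^2$ for all $x,y\in\X$, which is precisely the first-order inequality \eqref{eq:descent_ineq} with modulus $\rho = L_h$; by the implication (a.ii) $\Rightarrow$ (a.i), $h$ is $L_h$-weakly convex on $\X$. I expect the main obstacle to be the bookkeeping in (a): the algebraic rearrangement linking the gradient inequality for $\phi$ to \eqref{eq:descent_ineq}, together with the care needed so that the quantifier ``for every $y\in\X$'' is handled correctly when $h$ may equal $+\infty$ off $U$ (the inequality being trivial whenever $h(y)=+\infty$). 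The remaining parts are standard smooth-calculus estimates.
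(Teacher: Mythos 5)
Your proposal is correct and follows essentially the same route as the paper: part (a) via the convexity of $h+\tfrac{\rho}{2}\|\cdot\|^2$ combined with the first-order characterisation of differentiable convex functions and the same algebraic rearrangement, part (b) being the standard Descent Lemma (which the paper simply cites while you reprove it by integrating along the segment), and part (c) obtained by feeding the lower bound from (b.i) into the equivalence of (a). No substantive differences or gaps.
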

\begin{proof}
\textit{(b)} is a well-known result that is generally referred to as \emph{Descent Lemma} (see {\cite[Lemma 2.64]{Bauschke2017}}). We provide the proofs for \textit{(a)} and \textit{(c)}.
    \begin{enumerate}[label={(\alph*)}]
        \item[\textit{(a)}] By {\cite[Proposition 1.1.3]{cannarsa2004semiconcave}} and {\cite[Proposition 17.7]{Bauschke2017}}, for all $x\in U$
\begin{equation*} 
	(a.i) \text{ holds}\begin{array}[t]{l} 
	\Leftrightarrow \ (\forall\ y\in \X) \quad h(y)+\frac{\rho}{2}\|y\|^{2}\ge h(x)+\frac{\rho}{2}\|x\|^{2}+\langle \nabla h(x)+\rho x\ |\ y-x\rangle\\
			\Leftrightarrow \ (\forall\ y\in \X)\quad h(y)\ge h(x)+\langle \nabla h(x)\ |\ y-x\rangle-\frac{\rho}{2}\|x\|^{2}+\langle \rho x\ |\ y\rangle-\frac{\rho}{2}\|y\|^{2}\\
			\Leftrightarrow \ (\forall\ y\in \X)\quad h(y)\ge h(x)+\langle \nabla h(x)\ |\ y-x\rangle-\frac{\rho}{2}\|y-x\|^{2}\\
				\Leftrightarrow\ (a.ii) \text{  holds}.
	\end{array}
\end{equation*}

\item[\textit{(c)}] By \textit{(b)}, for all $x,y\in\X$ 
    $$
    -\frac{L_{h}}{2}\|x-y\|^{2}\le h(y)-h(x)-\langle y-x\ |\ \nabla h(x)\rangle,
    $$
    which, by \textit{(a)}, is equivalent to  $L_{h}$-weak convexity of $h$ on $\X$.
    \end{enumerate}
\end{proof}
We highlight that fact \textit{(c)} in the above proposition proves that in problem \eqref{eq: ourproblem} we are minimizing the sum of two weakly convex functions.
}
Our analysis is based on the concept of \textit{global proximal {$\varepsilon$-subdifferential}}.


\begin{definition}[Global proximal ${\varepsilon\text{-subdifferential}}$ \cite{clarke2008nonsmooth,RockWets1998Variational}]\label{def:eps:prox_subdiff}
 Let $\varepsilon\ge0$. Let $\mathcal{X}$ be a Hilbert space. The global proximal ${\varepsilon\text{-subdifferential}}$ of a function $\fonc{h}{\X}{(-\infty,+\infty]}$ at $x_{0}\in\dom\,h$ for $C \geq 0$ is defined as 
 \begin{equation}
 \label{eq:def_prox_subdiff_eps}
     \partial^{\,\varepsilon}_{C} h (x_0) = \left\{v\in\X\,|\,  h(x)-h(x_{0})\ge\langle v\ |\  x-x_{0}\rangle-C\|x-x_{0}\|^{2}-\varepsilon\ \ \forall x\in\X\right\}.
 \end{equation}
 For $\varepsilon=0$, we have
 \begin{equation}
\label{eq:def_prox_subdiff_0}
    \partial_{C} h(x_0) = \{v\in\mathcal{X} \,|\, h(x)\geq h(x_0) + \langle v\ |\ x-x_0\rangle  - C\|x-x_0\|^{2},\;\forall x\in \mathcal{X}\}.
\end{equation}

The elements of $\partial^{\,\varepsilon}_{C} h(x)$ are called proximal $\varepsilon$-subgradients. 
\end{definition}

{In analogy to the convex case, for any $\rho\ge 0$, we  use the notation $\Gamma_\rho (\X)$ to indicate the class of proper lower-semicontinuous $\rho$-weakly convex functions defined on $\X$ with values in $(-\infty, + \infty]$. In particular, $\Gamma_0 (\X)$ denotes  the class of proper lower-semicontinuous convex functions defined on $\X$ with values in $(-\infty, + \infty]$.} {By {\cite[Proposition 3.1]{jourani1996Subdifferentiability}}, for a function $h\in \Gamma_\rho(\X)$, the global proximal subdifferential $\partial_{{\rho}/{2}} h(x_0)$ defined by \eqref{eq:def_prox_subdiff_0} coincides with the set of local proximal subgradients which satisfy the proximal subgradient inequality
\eqref{eq:def_prox_subdiff_0}  locally in a neighbourhood of $x_{0}$. 
An extensive study of local proximal subdifferentials in Hilbert spaces can be found in 
{\cite[Chapter 2]{clarke2008nonsmooth}} and in \cite{ Bernard2005_ProxRegular}.
}
By \cite[Proposition 2]{bednarczuk2022calculus}, for $h\in\Gamma_{\rho}(\X)$ with $\rho\geq 0$, we have ${\dom \partial_{\rho/2} h \subset \dom h}$
and for every $\varepsilon> 0$ {the equality ${\dom \partial_{\rho/2}^{\,\varepsilon} h = \dom h}$ holds.}
When $\varepsilon=0$, the global proximal subdifferential for functions in $\Gamma_\rho(\X)$ can be seen as a natural generalisation of the Moreau's subdifferential for functions in $\Gamma_0(\X)$: more specifically, for every $x\in \dom \partial _{\rho/2}h$ the existence of a global affine tangent minorant function to $h$ at $x$ is replaced by the existence of a global quadratic concave tangent minorant function.

\section{Proximal Operators}\label{sec:inexact}

We take into account a possibly inexact proximal step for the weakly convex function in algorithm  \eqref{eq: FB upate : 2}. The exact proximal step is considered in {\cite[Section 5]{bohm2021variable}} to infer a complexity bound for the exact forward-backward algorithm. {We use the concept of $\varepsilon$-proximal operator introduced in \cite{bednarczuk2022calculus}, see also \Cref{def:eps_prox} below. Its properties, expressed in terms of global proximal ${\varepsilon\text{-subdifferential}}$s, are investigated in {\cite[Proposition 4, Proposition 5]{bednarczuk2022calculus}}, where a comparison with other concepts of $\varepsilon$-proximal operator in the convex case is provided. The concept of an inexact proximal operator and its computational tractability lie at the core of many proximal methods referring to fully convex case, }where both $f$ and $g$ are convex \cite{barre2022principled,Millan2019InexactProximal,  salzo2012inexact}.
{Proximal operators for nonconvex functions are investigated, e.g. in \cite{hare2009computing}, where an implementable algorithm is proposed for lower-$C^{2}$ functions that are defined as the pointwise maximum of a finite collection of
quadratic function.
}
\begin{definition}[Proximal Map]\label{def:prox} Let $\X$ be a Hilbert space and $\fonc{h}{\X}{(-\infty,+\infty]}$ be a proper $\rho$-weakly convex function on $\X$. For any $\alpha>0$, the set-valued mapping
$\prox_{\alpha h}:  \X  \rightrightarrows  \X $
defined as
\begin{equation}\label{eq:proximal_map}
 \left(\forall y \in \X \right) \quad   \prox_{\alpha h}(y) := \underset{x\in \X}{\argmin}\left\{h(x) + \frac{1}{2\alpha}\|x-y\|^2\right\}
\end{equation}
is called \textit{proximal map} of $h$ at $x$ with respect to parameter $\alpha$.
In general, $\dom \prox_{\alpha h}$ could be empty.
When $1/\alpha> \rho$, or when $h$ is bounded from below,   $\dom \prox_{\alpha h}:= \{x\in \X\,|\, \prox_{\alpha h}(x) \neq \emptyset\}=\X$. And, if $1/\alpha> \rho$,  $\prox_{\alpha h}$ is  single-valued.
\end{definition}

{The proximal point mapping is locally Lipschitz continuous
and its set of fixed points coincides with the critical points of the original function.}
It is not always possible to rely on a closed-form expression for the proximity operator of a function (we refer to the web page \cite{proxpage} for a list of examples where these explicit forms are available). 
In general, the computation of the proximal map is an independent optimisation problem.
 It follows that the proximal map might be known up to a certain accuracy only, and 
 in our convergence analysis, we take this fact into account. 
 
 In order to deal with the inexact case in \eqref{eq: FB upate : 2} we recall the concepts of $\varepsilon$-solution for an optimisation problem and $\varepsilon$-proximal point.

\begin{definition}[$\varepsilon$-solution]\label{def:epssol} Let $\X$ be a Hilbert space and $\fonc{h}{\X}{(-\infty,+\infty]}$ be a proper function  bounded from below,  $\varepsilon\geq0$ The element $x_\varepsilon$ is  an $\varepsilon$-\textit{solution} to the minimisation problem
 \begin{equation}
     \minimize{x\in \X}{h(x)}
 \end{equation}
 if $h(x_\varepsilon) \leq h(x) + \varepsilon$ for all $x \in \X.$
\end{definition}


\begin{definition}[$\varepsilon$-proximal point]\label{def:eps_prox} Let $\X$ be a Hilbert space, $h:\X\rightarrow(-\infty,+\infty]$ be proper and bounded from below, $y\in \X$ and $\alpha>0$, $\varepsilon \geq 0$. Any $\varepsilon$-solution to the proximal minimisation problem 
\begin{equation} 
\label{eq:eps_prox}
\minimize{x\in\X}{ h(x)+\frac{1}{2\alpha}\|x-y\|^{2}}
\end{equation}
 is  a $\varepsilon$-proximal point for $h$ at $y$ with respect to $\alpha$.  The set of all $\varepsilon$-proximal points of $h$ at $y$ is denoted as
\begin{equation}
    \varepsilon\text{-}\prox_{\alpha h}(y) := \left\{x\in \X\,|\, x\  \text{is\;a\;}\varepsilon\text{-solution\;of\;\eqref{eq:eps_prox}} \right\}.
\end{equation}
\end{definition}
When  $h$ is convex, the above definition of an inexact proximal point coincides with the definition in {\cite[Equation (2.15)]{villa2013accelerated}} and the convergence analysis of an inexact forward-backward algorithm  {\cite[Equation (2.15)]{villa2013accelerated}} is proposed in {\cite[Appendix A]{villa2013accelerated}}.\\


\begin{definition}[$\varepsilon$-critical point]\label{def:crit} Let $\X$ be a Hilbert space, ${\fonc{h}{\X}{ (-\infty,+\infty]}}$  be a proper function and $\varepsilon\ge0$, $C\geq 0$.
 A point  $x\in\dom \partial _C^{\,\varepsilon} h $ is  a $C\text{-}\varepsilon$-\textit{critical point} for $h$ if $0\in\partial^{\,\varepsilon}_C h(x)$. The set of $C\text{-}\varepsilon$-critical points is identified as 
\begin{equation}
    \varepsilon \text{-} \operatorname{crit}_C \, h := \{x\in\X\,|\,0\in \partial^{\,\varepsilon}_C h (x) \}.
\end{equation}
\end{definition}

For a function $h\in\Gamma_{\rho}(\X)$, we  consider $C=\rho/2$. 
In order to relate the inexact proximal points of $h\in \Gamma_\rho(\X)$ with $\partial_{\rho/2}^{\, \varepsilon} h$, we use the following lemma.

\begin{lemma}[{\cite[Corollary 1]{bednarczuk2022calculus}}]
\label{cor:proxsub:2}	Let $\X$ be a Hilbert space.
Let $h\in\Gamma_\rho(\X)$ and  $\varepsilon\ge 0$, $\alpha>0$.
If $x_{\varepsilon} \in \varepsilon\operatorname{-prox}_{\alpha h}(y)$, then for any  $e \in \X $ with $\frac{\|e\|^2}{2\alpha}\leq \varepsilon$, we have
\begin{equation}\label{eq:eps:prox_inclusion:2}
\frac{y - x_\varepsilon - e}{\alpha } \in \partial_{ \rho/2}^{\,\varepsilon} h(x_\varepsilon).
\end{equation}
\end{lemma}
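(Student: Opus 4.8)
The plan is to verify directly the defining inequality \eqref{eq:def_prox_subdiff_eps} of $\partial^{\,\varepsilon}_{\rho/2}h(x_\varepsilon)$, after reducing the weakly convex setting to a convex one. Since $h\in\Gamma_\rho(\X)$, the function $\tilde h:=h+\frac{\rho}{2}\|\cdot\|^2$ is proper, lower-semicontinuous and convex (by the characterisation recalled after \Cref{def:paraconvexity}). A one-line completion-of-squares computation gives the shift identity $v\in\partial^{\,\varepsilon}_{\rho/2}h(x_\varepsilon)\iff v+\rho x_\varepsilon\in\partial^{\,\varepsilon}_{0}\tilde h(x_\varepsilon)$, where $\partial^{\,\varepsilon}_{0}\tilde h$ is the classical Moreau $\varepsilon$-subdifferential of the convex function $\tilde h$. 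Hence the target inclusion \eqref{eq:eps:prox_inclusion:2} is equivalent to showing that $w:=\frac{y-x_\varepsilon-e}{\alpha}+\rho x_\varepsilon$ belongs to $\partial^{\,\varepsilon}_{0}\tilde h(x_\varepsilon)$.

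First I would unfold the hypothesis. By \Cref{def:eps_prox} and \Cref{def:epssol}, $x_\varepsilon\in\varepsilon\text{-}\prox_{\alpha h}(y)$ means $h(x_\varepsilon)+\frac{1}{2\alpha}\|x_\varepsilon-y\|^2\le h(x)+\frac{1}{2\alpha}\|x-y\|^2+\varepsilon$ for every $x\in\X$. Expanding $\|x-y\|^2=\|x-x_\varepsilon\|^2+2\langle x-x_\varepsilon\,|\,x_\varepsilon-y\rangle+\|x_\varepsilon-y\|^2$, rearranging, and adding $\frac{\rho}{2}(\|x\|^2-\|x_\varepsilon\|^2)$ to both sides to pass from $h$ to $\tilde h$, I obtain the global inequality
\[ \tilde h(x)-\tilde h(x_\varepsilon)\ge\Big\langle\tfrac{y-x_\varepsilon}{\alpha}+\rho x_\varepsilon\,\Big|\,x-x_\varepsilon\Big\rangle+\Big(\tfrac{\rho}{2}-\tfrac{1}{2\alpha}\Big)\|x-x_\varepsilon\|^2-\varepsilon\qquad\forall x\in\X. \]
This already exhibits $w_0:=\frac{y-x_\varepsilon}{\alpha}+\rho x_\varepsilon$ as an approximate subgradient of $\tilde h$ at $x_\varepsilon$ carrying the residual quadratic term $(\frac{\rho}{2}-\frac{1}{2\alpha})\|x-x_\varepsilon\|^2$.

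It then remains to pass from $w_0$ to $w=w_0-\frac{e}{\alpha}$ and to dispose of that residual term at the cost allowed by the error budget. Comparing the two inequalities, the claim follows once one controls the discrepancy $\frac{1}{\alpha}\langle e\,|\,x-x_\varepsilon\rangle+(\frac{\rho}{2}-\frac{1}{2\alpha})\|x-x_\varepsilon\|^2$, for which I would invoke Cauchy--Schwarz together with Young's inequality in the form $\frac{1}{\alpha}\langle e\,|\,x-x_\varepsilon\rangle\le\frac{\|e\|^2}{2\alpha}+\frac{1}{2\alpha}\|x-x_\varepsilon\|^2$ and then feed in the admissibility bound $\frac{\|e\|^2}{2\alpha}\le\varepsilon$. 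The main obstacle is precisely this reconciliation: the correction coming from $e$ is linear in $x-x_\varepsilon$ whereas the residual is quadratic, so the weak-convexity constant $\rho/2$ appearing in \eqref{eq:def_prox_subdiff_eps} and the budget $\frac{\|e\|^2}{2\alpha}\le\varepsilon$ must be balanced carefully to absorb the residual uniformly in $x$; checking that the quadratic coefficient and the $\varepsilon$-slack genuinely match for every admissible $e$ is the crux of the argument, and it is exactly here that the full strength of the $\rho$-weak convexity of $h$ must be exploited.
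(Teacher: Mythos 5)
Your convexification is sound as far as it goes: the shift identity $v\in\partial^{\,\varepsilon}_{\rho/2}h(x_\varepsilon)\iff v+\rho x_\varepsilon\in\partial^{\,\varepsilon}_{0}\tilde h(x_\varepsilon)$ is correct, and so is the global inequality you derive from $\varepsilon$-minimality (it is exactly what one gets by expanding $\|x-y\|^2$ and regrouping). But the proof stops at the step you yourself flag as ``the crux'', and that step cannot be carried out. After your displayed inequality, the membership $w_0-\frac{e}{\alpha}\in\partial^{\,\varepsilon}_0\tilde h(x_\varepsilon)$ would require, uniformly in $x$,
\begin{equation}
\Big(\frac{\rho}{2}-\frac{1}{2\alpha}\Big)\|x-x_\varepsilon\|^2+\frac{1}{\alpha}\langle e\ |\ x-x_\varepsilon\rangle\ \geq\ 0,
\end{equation}
because the $-\varepsilon$ slack in the target has already been spent by the $\varepsilon$-minimality. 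Your proposed Young estimate $\frac{1}{\alpha}\langle e\ |\ x-x_\varepsilon\rangle\geq-\frac{\|e\|^2}{2\alpha}-\frac{1}{2\alpha}\|x-x_\varepsilon\|^2$ then consumes the budget $\frac{\|e\|^2}{2\alpha}\leq\varepsilon$ a \emph{second} time and adds a further negative quadratic, so for $\alpha<1/\rho$ --- precisely the step-size regime in which the paper runs the algorithm, $\alpha<\min\{1/L_g,1/(\rho+1)\}$ --- nothing absorbs the residual. Weak convexity cannot rescue this: it has already been used in full in the passage to $\tilde h$.

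The deeper problem is that under your (and the lemma's literal) universal reading of the quantifier, the statement is false, so no proof can close the gap. Take $\X=\R$, $h\equiv0\in\Gamma_0(\X)$ (so $\rho=0$), $y=0$, $\varepsilon>0$: then $\varepsilon\text{-}\prox_{\alpha h}(0)=[-\sqrt{2\alpha\varepsilon},\sqrt{2\alpha\varepsilon}]$, while $\partial^{\,\varepsilon}_{0}h(x)=\{0\}$ at every $x$, since for $v\neq0$ the inequality $0\geq v\,t-\varepsilon$ fails as $t\to\infty$. Choosing $x_\varepsilon=e=\sqrt{2\alpha\varepsilon}$ gives $\frac{y-x_\varepsilon-e}{\alpha}=-\frac{2\sqrt{2\alpha\varepsilon}}{\alpha}\neq0$, contradicting \eqref{eq:eps:prox_inclusion:2}. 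Note that the present paper contains no internal proof to compare against --- the lemma is imported from \cite{bednarczuk2022calculus} --- but its only use, in the proof of \Cref{prop:eps:FB estimate g-convex}, is existential (``there exists $e_t$ with $\frac{\|e_t\|^2}{2\alpha_t}\leq\varepsilon_t$ such that\dots''), and that is the version to prove. A correct route along your own reduction: if $\alpha\rho\geq1$, the residual $(\frac{\rho}{2}-\frac{1}{2\alpha})\|x-x_\varepsilon\|^2$ in your inequality is non-negative and may be dropped, so $e=0$ works. If $\alpha\rho<1$, write $h+\frac{1}{2\alpha}\|\cdot-y\|^2=\tilde h+q$ with the \emph{convex} quadratic $q=\frac{1}{2\alpha}\|\cdot-y\|^2-\frac{\rho}{2}\|\cdot\|^2$; the hypothesis gives $0\in\partial^{\,\varepsilon}_0(\tilde h+q)(x_\varepsilon)$, the exact sum rule for Moreau $\varepsilon$-subdifferentials (valid since $q$ is everywhere continuous) yields $\varepsilon_1+\varepsilon_2=\varepsilon$ and $u\in\partial^{\,\varepsilon_1}_0\tilde h(x_\varepsilon)$, $-u\in\partial^{\,\varepsilon_2}_0 q(x_\varepsilon)$, and a completion of squares identifies $\partial^{\,\varepsilon_2}_0 q(x_\varepsilon)=\big\{\frac{1-\alpha\rho}{\alpha}x_\varepsilon-\frac{y}{\alpha}+d\ \big|\ \frac{\alpha\|d\|^2}{2(1-\alpha\rho)}\leq\varepsilon_2\big\}$; setting $e:=\alpha d$ gives $\frac{\|e\|^2}{2\alpha}\leq(1-\alpha\rho)\varepsilon_2\leq\varepsilon$ and, undoing your shift, $\frac{y-x_\varepsilon-e}{\alpha}\in\partial^{\,\varepsilon_1}_{\rho/2}h(x_\varepsilon)\subseteq\partial^{\,\varepsilon}_{\rho/2}h(x_\varepsilon)$. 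The moral: the admissible $e$ is \emph{constructed}, not arbitrary.
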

  
 \Cref{cor:proxsub:2}  is used in \Cref{sec:inexact FB convergence}, where we investigate the convergence of the inexact scheme.

\begin{remark}
 By   \Cref{def:crit} with $\varepsilon=0$ and {\cite[Theorem 2]{bednarczuk2022calculus}}, for any $x_0\in \prox_{\alpha h}(y) $, we obtain 
\begin{equation}
    0\in \partial_{\rho/2}(h + \frac{1}{2\alpha}\|\cdot - y\|^2)(x_0) = \partial_{\rho/2} h(x_0) + \frac{x_0-y}{\alpha},
\end{equation}
hence {${(y-x_0)}/{\alpha}\in \partial_{\rho/2} h(x_0).$}
\end{remark}

\section{Sharpness and criticality}\label{sec:sharpness}

To achieve convergence for the forward-backward algorithm 
\eqref{eq: FB upate : 2} applied to a nonconvex problem \eqref{eq: ourproblem}, additional assumptions are needed. For example, in  \cite{attouch2013convergence, themelis2018forward, li2015global}, it is assumed that  $f+g$ satisfies the KL inequality with respect to the set of critical points, while in \cite{Atenas2023Unified}  a subdifferential error bound condition is assumed for critical points. 

In this section, we discuss the concept of sharpness, which is one of the main ingredients of the convergence analysis in Section \ref{sec:convergence:analysis}.
{
Consider the  problem
\begin{equation}
\minimize{x\in \X}{} h(x)
\end{equation}
where the objective function $h$ satisfies the following sharpness condition.}

\begin{definition}[Sharpness, {\cite{Polyak1979,burke1993WeakSharpMinima}}] 
\label{def:sharpness}
Let $\X$ be a Hilbert space and $\fonc{h}{\X}{(-\infty,+\infty]}$ be a function with a nonempty solution set, ${S:=\argmin_{x\in\X}h(x)}\neq\emptyset$. Function $h$  satisfies the sharpness condition locally if there 
exist $\mu >0$ and $\delta>0$ such that
\begin{equation}
    \label{eq:sharpness:local:def}
    \left(\forall x\in B(S,\delta)\right) \quad h(x) - \inf_{x\in \X} h(x) \geq \mu \dist (x,S).
\end{equation}

If \eqref{eq:sharpness:local:def} is satisfied for every $\delta>0$, then the sharpness condition  holds globally and it reads as
\begin{equation}
    \label{eq:sharpness:def}
        \left(\forall x\in \X\right)\qquad h(x) - \inf_{x\in\X} h(x) \geq \mu \dist (x,S).
\end{equation}

\end{definition}

The sharpness defined above appears in the literature under different names, {\em e.g.} weak sharp solutions, error bound and was investigated on different levels of generality. For the detailed discussion see {\em e.g.} \cite{cuong2022error} and the references therein. The following elementary example shows that weakly convex globally sharp functions exist.

\begin{example}\label{ex:sharpness}
\sloppy The function $f$ from \Cref{ex: weak convexity} is globally sharp with solution set $S=\{a,b\}$. Take ${\delta = \left|\frac{a-b}{2} \right|}$ so that ${\overline{B}(a,\delta)}\cap {\overline{B}(b,\delta)} = \{\frac{a+b}{2}\}$. If $x = \frac{a+b}{2}$, 
then
$
   {f(x) =  |-\left(\frac{a-b}{2}\right)\left(\frac{a-b}{2}\right) | = |\frac{a-b}{2} | \dist(x,S).}
$
If $x\in B(a,\delta)$, 
then ${f(x) \geq \delta |x-a| = \delta \dist (x,S)}$. If $x\in B(b,\delta)$, then 
${ f(x) \geq \delta |x-b| = \delta\dist (x,S)}$. If $x\notin B(a,\delta)$ and $x\notin B(b,\delta)$, then $f(x) \geq  \delta |x-a|$ and $f(x) \geq \delta |x-b|$.
Hence, for every $x\in\R$m
  ${f(x) \geq \delta \min\{ |x-a|, |x-b|\} =  \delta \dist (x,S)}$, $f$ is {globally sharp} with constant  { $\delta>0$} for  $\delta \leq\left |\frac{a-b}{2}\right |$.
\end{example}

{
The concept of sharpness allows us to localize a neighbourhood around the solution set $S$, where there are no other $\varepsilon$-critical points than solutions. }

{ 
\begin{proposition}[Position of solutions relative to $C$-$\varepsilon$-critical points]
\label{prop:eps:statpoint}
Let $\X$ be a Hilbert space and ${\fonc{h}{\X}{(-\infty,+\infty]}}$ be a proper function on $\X$.  
Let 
$S = \argmin_{x\in\X} h(x)$ and
$h$ satisfy the sharpness condition \eqref{eq:sharpness:def} with a constant $\mu>0$. {
For any $0\leq \varepsilon< \mu^2/4C$ and any $x \in\varepsilon\operatorname{-crit}_C\;(h)\setminus S $, $C\geq 0$, 
\begin{equation} 
\label{eq:rings}
\text{either \ } \dist(x,S) \geq \tau_2  \text{  or  } 0< \dist(x,S) \leq \tau_1(\varepsilon),
\end{equation}
where \begin{equation}
\label{eq: t12}
    \tau_{1}(\varepsilon): = \frac{\mu - \sqrt{\mu^2 - 4 C\varepsilon}} {2C} \qquad  \text{and}\qquad\tau_{2}(\varepsilon): = \frac{\mu + \sqrt{\mu^2 - 4 C\varepsilon}} {2C}.
\end{equation}
}
\end{proposition}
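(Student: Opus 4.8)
The plan is to convert the defining inequality of a $C$-$\varepsilon$-critical point into an upper bound on $h(x)-\inf h$ in terms of $\dist(x,S)^2$, pair it with the sharpness lower bound, and read off \eqref{eq:rings} from the resulting quadratic inequality in the single scalar $d:=\dist(x,S)$.

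First I would unfold $0\in\partial^{\,\varepsilon}_C h(x)$ using \eqref{eq:def_prox_subdiff_eps} with $v=0$: for every $y\in\X$,
\[
h(y)-h(x)\ge -C\|y-x\|^{2}-\varepsilon.
\]
Writing $m:=\inf_{z\in\X}h(z)$ and testing this inequality against an arbitrary $s\in S$ (for which $h(s)=m$, since sharpness presupposes $S\neq\emptyset$, so the infimum is attained) yields $h(x)-m\le C\|x-s\|^{2}+\varepsilon$. As this holds for every $s\in S$ and the left-hand side does not depend on $s$, I would take the infimum over $s\in S$ and use $\inf_{s\in S}\|x-s\|^{2}=\dist(x,S)^{2}$ to obtain the upper bound
\[
h(x)-m\le C\,\dist(x,S)^{2}+\varepsilon.
\]

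Next I would invoke the global sharpness condition \eqref{eq:sharpness:def}, which gives the lower bound $h(x)-m\ge\mu\,\dist(x,S)$. Combining the two estimates and setting $d:=\dist(x,S)>0$ (strictly positive because $x\notin S$) produces the scalar inequality $\mu d\le Cd^{2}+\varepsilon$, i.e. $Cd^{2}-\mu d+\varepsilon\ge 0$. The hypothesis $\varepsilon<\mu^{2}/(4C)$ guarantees $\mu^{2}-4C\varepsilon>0$, so this upward-opening parabola (here $C>0$) has two distinct real roots, namely $\tau_{1}(\varepsilon)$ and $\tau_{2}(\varepsilon)$ from \eqref{eq: t12}, and is nonnegative precisely for $d\le\tau_{1}(\varepsilon)$ or $d\ge\tau_{2}(\varepsilon)$. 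Together with $d>0$ this is exactly the dichotomy \eqref{eq:rings}.

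The computation is short, so I do not expect a serious obstacle; the only points requiring a little care are the interchange of the infimum over $S$ with the squaring (valid since $t\mapsto t^{2}$ is increasing on $[0,+\infty)$, whence $\inf_{s}\|x-s\|^{2}=(\inf_{s}\|x-s\|)^{2}$) and the implicit assumption $C>0$, without which the thresholds in \eqref{eq: t12} are undefined and the argument instead collapses to the linear bound $d\le\varepsilon/\mu$.
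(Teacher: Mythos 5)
Your proof is correct and follows essentially the same route as the paper's: unfold $0\in\partial^{\,\varepsilon}_C h(x)$ at points of $S$, take the infimum over $S$, combine with sharpness to get the scalar inequality $\mu\,\dist(x,S)\le C\dist^2(x,S)+\varepsilon$, and solve the quadratic. Your added remarks on interchanging the infimum with squaring and on the implicit need for $C>0$ (the paper states $C\ge 0$ although $\tau_1,\tau_2$ are undefined at $C=0$) are valid points of care that the paper glosses over.
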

\begin{proof}
Let $x \in\varepsilon\operatorname{-crit}_C\;(h)\setminus S $. By \Cref{def:crit}, $0\in \partial_{C}^{\varepsilon} h(x)$,
which means that
\begin{equation}
\left(\forall y \in \X\right) \qquad \qquad h(x) - h(y) \leq C\|x-y\|^2 + \varepsilon.
\end{equation}
By the global sharpness of $h$, 
\begin{equation}
(\forall \overline{x}\in S) \qquad \mu \dist(x,S) \leq C\|x-\overline{x}\|^2 + \varepsilon.
\end{equation}
By taking the infimum in the right-hand side over all $\overline{x}\in S$ we obtain the inequality
\begin{equation}
    \label{eq:eps:quadratic_equation}
    \mu \operatorname{dist}(x,S) \leq C\operatorname{dist}^2(x,S) + \varepsilon.
\end{equation}
which is  quadratic  with respect to $\dist(x,S)$
and  either $ \dist(x,S) \geq \tau_2(\varepsilon)  $ or $0< \dist(x,S) \leq \tau_1(\varepsilon)$.
\end{proof}
}

\begin{remark} 
\label{rem:local_sharp}
When the global sharpness \eqref{eq:sharpness:def} is replaced in \Cref{prop:eps:statpoint} by the local sharpness \eqref{eq:sharpness:def}, then \eqref{eq:rings} remains unchanged provided $\delta>\tau_{2}(\varepsilon)$.
\end{remark}

In view of \Cref{prop:eps:statpoint}, given $0\le\varepsilon<\mu^2/C$, 
for any $C$-$\varepsilon$-critical point $x\in \X$ of $h$ which does not belong to $S$, either the point $x$ is  in a neighbourhood of $S$ of radius $\tau_1(\varepsilon)$ or its distance from $S$ is bounded from below by a positive quantity $\tau_2(\varepsilon)$. 

\begin{remark} 
\label{rmk:statpoint}
When $\varepsilon=0$, for a ${C}$-critical point $x$ that does not belong to $S$, by \cref{prop:eps:statpoint} we obtain
\begin{equation}\label{eq:DavisInequality1}
    \operatorname{crit}_C (h) \setminus S\subset \left\{ x\in \X\ |\ \frac{\mu}{C} \leq \dist(x,S)\right\},
\end{equation}
which coincides with {\cite[ Lemma 3.1]{Davis2018SubgradientMethodsSharpWeaklyConvex}}, that is stated for $\rho$-weakly convex functions and $C=\rho / 2$.
\end{remark}

\Cref{prop:eps:statpoint}, in the form considered in \Cref{rmk:statpoint}, is illustrated in the following example, 

{
\begin{example}\label{ex:critical_points}
Let us consider function $f$ from \Cref{ex: weak convexity}. We notice that $x = \frac{a+b}{2}$ is a $(\rho/2)$-$\varepsilon$-critical point in the sense of \Cref{def:crit} for $\rho = 2$ and $\varepsilon=0$. 
Observe that, for every $x\in\R$,
\begin{equation*}
\begin{aligned}
  f(x) &= \max \{(x-a)(x-b), -(x-a)(x-b)\} = \max \{x^2 - (a+b)x + ab, -x^2-ab + (a+b)x\}
\end{aligned}\end{equation*}
which implies that, for every $x\in\R$, $ f(x) \geq  -x^2-ab + (a+b)x$.
We now notice that, for any $(a,b)\in\R^2$, the identity $ {(a-b)^2}/{4} -  {(a+b)^2}/{4} = -ab $ holds. It follows that
\begin{equation*}
  (\forall x \in \R) \qquad f(x) \geq  -x^2 + (a+b)x + \frac{(a-b)^2}{4} -   \frac{(a+b)^2}{4}  = -\left( x - \frac{a+b}{2}\right)^2 + \frac{(a-b)^2}{4}.
\end{equation*}
Since $ f\left( \frac{a+b}{2}\right) = \frac{(a-b)^2}{4}$, we get $f(x) -  f\left( \frac{a+b}{2}\right)  \geq    -\left( x - \frac{a+b}{2}\right)^2$ for $x\in\R$, 
which, according to \Cref{def:eps_prox}, implies that $0 \in \partial_{\rho/2}^{\,\varepsilon}  f \left(\frac{a+b}{2}\right)$ for every $\varepsilon\geq 0 $ and  $\rho = 2$. In addition, considering the fact that function $ f$ satisfies the global sharpness condition with constant $\mu = \left| \frac{a-b}{2} \right|$ (as shown in \Cref{ex:sharpness}), we have 
\begin{equation*}
    \frac{2\mu}{\rho} = \frac{2 \left| \frac{a-b}{2} \right|}{2} = \left| \frac{a-b}{2} \right|  = \dist\left( \frac{a+b}{2},S\right).
\end{equation*}
\end{example}

For a weakly convex function $h\in \Gamma_\rho (\X)$, 
and the set $[h=0]=\{x\in\X\,|\, h(x) =0\}$, we  define the  Kurdyka-{\L}ojasiewicz inequality as in \cite{bolte2010characterizations}. For a given ${r_0}>0$, the set $\mathcal{K}(0,{r_0})$ represents the class of \emph{desingularisation functions}, which is defined as
\begin{equation}
    \mathcal{K}(0,{r_0}) = \{\varphi \in \mathcal{C}[0,{r_0})\cap\mathcal{C}^1(0,{r_0}) \,|\, \varphi(0) = 0\text{\;and\;}\phi'(r)>0 \;\forall r \in (0,{r_0})\}.
\end{equation}
\begin{definition}[Kurdyka-{\L}ojasiewicz Inequality]\label{def:KL:2}
Let $\delta_0, r_0>0$, {$x^* \in [h=0]$} and $\mathcal{K}(0,r_0)$ be a class of desingularisation functions. We say that the Kurdyka-{\L}ojasiewicz inequality holds at $x^*$ if there exist $r\in (0,r_0)$ and a function $\varphi\in \mathcal{K}(0,r_0)$ such that 
\begin{equation}
    \label{eq:KL:loc}
   \inf\{\|z\| \ |\ z\in \partial_{\rho/2}(\varphi\circ h)(x)\} \ge 1,\ \ \forall\,x\in \overline{B}(x^*, \delta_0)\cap [ 0 < h \le r],
\end{equation}
where $[ 0 < h \le r] = \{x\in\X\,| 0< h(x) \leq r\} $.
\end{definition}

We close this section by addressing the question of the relationship between \eqref{eq:KL:loc} and \eqref{eq:sharpness:local:def}. 

\begin{proposition} 
\label{cor:sh_versus_kl} 
Let $\X$ be a Hilbert space and $h\in \Gamma_{\rho}(\X)$, $h\geq 0$. Let $x^{*}\in S = \argmin_{x\in\X} h(x) = [h\leq 0]\neq\emptyset$.
Consider the following facts:
\begin{description} 
 \item \textit{(i)} there exist $r_1>0$, $\delta_1>0$, $\mu>0$ such that for every $x\in  B(x^*, \delta_1)\cap [ 0 < h(x) < r_1]$
 \begin{equation}\label{eq:prop:sh_loc}
     h(x) - \inf_{x\in \X} h(x) \geq \mu \dist (x,S);
 \end{equation}
 \item \textit{(ii)} \sloppy there exist $\overline r>0$, $r_2\in(0,\overline{r})$, $\delta_2>0$ and $\varphi\in\mathcal{K}(0,\overline{r})$ such that 
 for every
${x\in \overline{B}(x^*, \delta_2)\cap [0 < h(x) \le r_2]}$
 \begin{equation}\label{eq:prop:kl_loc}
     \inf\{\|z\| \ |\ z\in \partial_{\rho/2}(\varphi\circ h)(x)\} \ge 1.
 \end{equation}
\end{description}
Then $(i)\implies (ii)$. If $h$ is continuous and $\varphi = \frac{1}{\mu}\operatorname{Id} $ for some $\mu>0$ (where  $\operatorname{Id}$ is the identity operator over $\X$), then $(ii) \implies (i)$.
 \end{proposition}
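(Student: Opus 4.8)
The plan is to prove the two implications separately, using throughout that $h\ge 0$ together with $S=[h\le 0]$ forces $\inf_{x\in\X}h=0$ and $h|_{S}\equiv 0$. The unifying mechanism is the duality, for weakly convex functions, between sharp growth of $h$ off $S$ and a uniform lower bound on the norms of the proximal subgradients; since $\varphi$ is linear, $\partial_{\rho/2}(\varphi\circ h)$ is a rescaling of a proximal subdifferential of $h$, which makes the correspondence quantitative.

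For $(i)\Rightarrow(ii)$, I would look for a desingularising function of the simple form $\varphi=c\,\operatorname{Id}\in\mathcal{K}(0,\overline r)$, with slope $c>0$ to be tuned. Choosing $\delta_2<\delta_1$ and $r_2<r_1$ guarantees $\overline{B}(x^*,\delta_2)\cap[0<h\le r_2]\subseteq B(x^*,\delta_1)\cap[0<h<r_1]$, so that the sharpness inequality \eqref{eq:prop:sh_loc} is available on the smaller region. Fix $x$ there and $z\in\partial_{\rho/2}(c\,h)(x)$. Writing out \eqref{eq:def_prox_subdiff_0} for the function $c\,h$, testing it at an arbitrary $\overline x\in S$ (where $h(\overline x)=0$), and taking the infimum over $\overline x\in S$ yields $h(x)\le \tfrac{\|z\|}{c}\dist(x,S)+\tfrac{\rho}{2c}\dist^2(x,S)$. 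Inserting \eqref{eq:prop:sh_loc} and dividing by $\dist(x,S)>0$ leaves $\|z\|\ge c\mu-\tfrac{\rho}{2}\dist(x,S)\ge c\mu-\tfrac{\rho}{2}\delta_2$, where the last step uses $\dist(x,S)\le\|x-x^*\|\le\delta_2$. Taking $c\ge\tfrac{1}{\mu}\bigl(1+\tfrac{\rho}{2}\delta_2\bigr)$ then forces $\|z\|\ge 1$ for every such $z$, so the infimum in \eqref{eq:prop:kl_loc} is at least $1$. When $\rho=0$ this reproduces the expected choice $\varphi=\tfrac{1}{\mu}\operatorname{Id}$, and the steeper slope for $\rho>0$ is exactly what absorbs the proximal correction term.

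For $(ii)\Rightarrow(i)$, with $\varphi=\tfrac{1}{\mu}\operatorname{Id}$ the hypothesis becomes a uniform lower bound on the norms of the proximal subgradients of $h$ on the KL region. I would recover sharpness by a steepest-descent argument, realised either as the gradient flow of $h$ (well defined since $h+\tfrac{\rho}{2}\|\cdot\|^2$ is convex, proper and lower-semicontinuous) or as a chain of proximal steps $x_{k+1}\in\prox_{\alpha h}(x_k)$, which are nonempty by \Cref{def:prox} because $h\ge 0$. Along such a trajectory the minimisation defining each step gives the descent estimate $h(x_k)-h(x_{k+1})\ge\tfrac{1}{2\alpha}\|x_{k+1}-x_k\|^2$, while the exact proximal optimality $\tfrac{x_k-x_{k+1}}{\alpha}\in\partial_{\rho/2}h(x_{k+1})$ (see the remark following \Cref{cor:proxsub:2}), combined with the KL bound—after reconciling the proximal parameters as discussed below—forces each step to be quantitatively long as long as $h(x_{k+1})>0$. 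Multiplying the two facts produces a telescoping inequality of the form $h(x_k)-h(x_{k+1})\ge\kappa\,\|x_{k+1}-x_k\|$ with a constant $\kappa>0$ proportional to $\mu$, whence $\sum_k\|x_{k+1}-x_k\|\le\kappa^{-1}h(x_0)$ and the trajectory is Cauchy. Using continuity of $h$ to pass to the limit $\overline x=\lim_k x_k$, and the KL bound once more to exclude $0\in\partial_{\rho/2}h(\overline x)$ unless $h(\overline x)=0$, I conclude $\overline x\in S$ and $\dist(x_0,S)\le\sum_k\|x_{k+1}-x_k\|\le\kappa^{-1}h(x_0)$, i.e.\ the sharpness \eqref{eq:prop:sh_loc} with constant $\kappa$.

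The main obstacle is the reverse implication, where two points require care. First, one must keep the descent trajectory inside $\overline{B}(x^*,\delta_2)\cap[0<h\le r_2]$ so that \eqref{eq:prop:kl_loc} may legitimately be invoked at each iterate; this is arranged by shrinking $r_1,\delta_1$ against the length budget $\kappa^{-1}h(x_0)\le\kappa^{-1}r_1$ and then ruling out that the limit stalls at a point where $h>0$. Second, and more delicate, is the bookkeeping of the proximal constant: the KL inequality controls $\partial_{\rho/2}(\varphi\circ h)=\partial_{\rho/2}(\tfrac{1}{\mu}h)$, which is a rescaling of $\partial_{\mu\rho/2}h$ rather than of $\partial_{\rho/2}h$, whereas the descent naturally produces subgradients of $h$ at parameter $\rho/2$. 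Reconciling these two proximal parameters—by tuning the step size $\alpha$ in the discrete scheme, or by selecting the minimal-norm subgradient along the flow—is where the real work of the proof lies, and it is precisely here that the continuity hypothesis on $h$ and the linearity of $\varphi$ are used.
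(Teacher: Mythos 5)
Your forward implication is correct and takes a genuinely different, more elementary route than the paper's. The paper proves $(i)\Rightarrow(ii)$ by localising $h$ with the indicator of a closed ball and invoking the metric characterisation of \Cref{thm:sh_versus_kl} (an Az\'e--Bolte type identity between the infimal subgradient norm and the infimal quotient $\tfrac{h(x)-r}{\dist(x,[h\le r])}$), together with the sum rule $\partial_{\rho/2}(h+\iota_{\overline B(x^*,\delta)})=\partial_{\rho/2}h+N_{\overline B(x^*,\delta)}$. You instead test the defining inequality of $\partial_{\rho/2}(c\,h)$ directly at points of $S$ and absorb the quadratic correction $\tfrac{\rho}{2}\dist^2(x,S)$ into the slope $c$ of the desingularising function; this is shorter, self-contained, and in one respect more careful than the paper, since it bounds exactly the set $\partial_{\rho/2}(\varphi\circ h)$ appearing in \eqref{eq:prop:kl_loc}, whereas the paper concludes $\dist(0,\partial_{\rho/2}h(x))\ge\mu$ and identifies this with \eqref{eq:prop:kl_loc} for $\varphi=\tfrac1\mu\operatorname{Id}$ without noting that $\partial_{\rho/2}(\tfrac1\mu h)=\tfrac1\mu\,\partial_{\mu\rho/2}h$ rescales the proximal parameter. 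The price of your route, that $\varphi=c\operatorname{Id}$ has slope $c=\tfrac1\mu(1+\tfrac\rho2\delta_2)$ rather than $\tfrac1\mu$, is immaterial because $(ii)$ only asserts the existence of some $\varphi\in\mathcal{K}(0,\overline r)$.

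The reverse implication is where your proposal has a genuine gap. The paper disposes of $(ii)\Rightarrow(i)$ by citing \Cref{thm:stud_ward}, the Hilbert-space adaptation of Studniarski--Ward; you set out to re-prove that statement via a proximal descent trajectory, but you explicitly defer the two steps that constitute the actual proof: (1) confining the iterates $x_{k+1}\in\prox_{\alpha h}(x_k)$ to $\overline{B}(x^*,\delta_2)\cap[0<h\le r_2]$ so that \eqref{eq:prop:kl_loc} may be invoked at every step, and (2) reconciling the proximal parameters. The second point is not mere bookkeeping: the hypothesis bounds elements of $\partial_{\rho/2}(\tfrac1\mu h)=\tfrac1\mu\,\partial_{\mu\rho/2}h$, while the prox step only delivers $\tfrac{x_k-x_{k+1}}{\alpha}\in\partial_{\rho/2}h(x_{k+1})$; since $\partial_C h\subseteq\partial_{C'}h$ for $C\le C'$, when $\mu<1$ one has $\partial_{\mu\rho/2}h\subseteq\partial_{\rho/2}h$ and the KL bound says nothing a priori about the prox element, so the claim that "each step is quantitatively long" does not yet follow. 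Until these are supplied, the reverse direction remains a plan rather than a proof. Two smaller points: the telescoping as sketched yields sharpness with constant $\mu/2$ rather than $\mu$ (harmless, since $\mu$ in $(i)$ is existentially quantified), and the final passage to the limit needs lower semicontinuity of $h$ (available from $h\in\Gamma_\rho(\X)$) in addition to the continuity hypothesis to place $\overline x$ in $S$.
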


}

\begin{proof}
    See Appendix \ref{app:proof_of_equivalence}.
\end{proof}
While sharpness can imply the KL property, the other implication is not always true. Consider the function $h(x)=x^2$: clearly, this function satisfies KL inequality, but it is not sharp. In fact, it is sharp with order $2$, or we can say that $h$ satisfies quadratic growth condition \cite{drusvyatskiy2018error}.
As observed in \cite{Atenas2023Unified}, the quadratic growth condition is related to the subdifferential error bound around the set of critical points.


\begin{definition}[ c.f. {Definition 3.1,\cite{Atenas2023Unified}}] \sloppy We say that the subdifferential error bound holds for a problem of the form $\minimize{x\in\X} h(x)$ where ${\fonc{h}{\X}{(-\infty,+\infty]}}$ is bounded from below, if, for every ${v\geq \inf_{x\in\X} h(x)}$, there exists $\ell >0$ such that whenever $x\in\X$, $h(x)\leq v$, the following is true:
\begin{equation}
\dist(x,\operatorname{crit} h) \leq \ell \dist(0, \partial_P h (x)),
\end{equation}
where $\operatorname{crit} h = (\partial_P h)^{-1}(0)$ and  $$\partial_P h(x_0) = \{v\in \X \mid \exists C\geq 0 \text{ s.t. } h(x)-h(x_0) \geq \langle v \mid x-x_0\rangle - C \|x-x_0\|^2,\, \forall x\in\X\}. $$
\end{definition}
In fact, for weakly convex functions, it has been proved that the subdifferential error bound is a sufficient condition for the quadratic growth condition \cite{cuong2022error,liao2024error}. In the example below, we show that global sharpness does not imply that the subdifferential error is global.

\begin{example}
{Consider the function defined for every $x\in\R$  as $\bar{f}(x)=|x^{2}-1|$, which is an instance of the function from \cref{ex:critical_points} with $a=-1$ and $b=1$: this does not satisfy the  global subdifferential error bound but only the local version, i.e. there exists $\varepsilon>0$ such that
$$
(\forall x\in B(S,\varepsilon))\quad \dist(x,S)\le\kappa \dist(0,\partial_{P} \bar{f}(x)).
$$
Let us consider
$$
g(x)=\bar{f}(x)+x^{2}=\left\{\begin{array}{ll}
x^{2}-1+x^{2}&x^{2}-1>0 ,\\
-x^{2}+1+x^{2}& x^{2}-1\le 0\end{array}\right. .
$$
And $\dist(0,\partial_{P} \bar{f}(x))=\dist(2x, \partial g(x))$, where
$$
\partial g(x)=\left\{\begin{array}{ll}
4x&x^{2}-1>0 ,\\
0& x^{2}-1\le 0\end{array}\right. .
$$
As a consequence, 
$$
\dist(0,\partial_{P} \bar{f}(x))=d(2x, \partial g(x))=2|x|,
$$
while $\dist(x,S) = \min\{ |x-1|,|x+1|\}$ (c.f. the dashed lines picture of $\dist(x,S)$ in \cref{fig:comparisons}). This means that the subdifferential error bound holds locally but not globally, while the sharpness condition holds globally.
}
\end{example}

{ \Cref{cor:sh_versus_kl} illustrates that, for weakly convex functions, the inequality defining the sharpness condition from \Cref{def:sharpness} is closely related to the KL property as defined in \Cref{def:KL:2}. Note that \eqref{eq:prop:sh_loc} holds around a given $x^* \in S$, while the local sharpness condition requires the same relation \eqref{eq:prop:sh_loc} to hold locally around the solution set $S$. Details about the relation between sharpness and KL in the finite-dimensional setting are discussed in \cite{Bai2022Equivalence}. 

However, as stated in \cite{bonettini2020convergence}, when the algorithms under analysis involve $\varepsilon$-proximal points, in order to reproduce the abstract convergence scheme from \cite{attouch2013convergence}, one would need to use an approximate version of the KL inequality involving (proximal) $\varepsilon$-subdifferentials. As a consequence, this raises the question of under which conditions an objective function satisfies this approximate KL inequality.  Unfortunately, it can be shown that this approximate property fails to hold even for basic convex functions in the finite-dimensional setting, as illustrated in \cite[Example 2]{bonettini2020convergence} for the case of the absolute value. This issue is one of the reasons behind our choice to adopt the assumption of sharpness of the objective function in order to carry out the convergence analysis of \Cref{alg:epsFB} in the next sections. Note that in \Cref{theo:sharpness 1}, we will provide direct proof of the sharpness of the objective function of the feasibility problems we consider in \Cref{sec:feasibility}.}

\section{The Inexact FB Algorithm} \label{sec:inexact FB convergence}

\Cref{assu:problem} summarises the standing assumptions on functions $f$ and $g$, which are made in all the results of the subsequent sections.
\begin{assumption}
    \label{assu:problem} Let the following facts hold:
    \begin{enumerate}[parsep=0cm, itemsep=0cm, topsep=0cm, label=(\roman*)]
    \setlength{\itemindent}{+.3in}
        \item function $f\in\Gamma_\rho (\X)$ is bounded from below;
        \item function $g\in \Gamma_0 (\X)$ is {Fr\'echet} differentiable on $\X$ with a $L_g$-Lipschitz continuous gradient;
        \item the set $S = \argmin_{x\in\X}(f+g)(x)$ is non-empty.
    \end{enumerate}
\end{assumption}

We discuss the following forward-backward scheme:\\

\begin{algorithm}[H]\caption{Inexact Forward-Backward}\label{alg:epsFB}
\textbf{Initialise:} $x_0 \in \dom (f+g)$\\
\textbf{Set:} $(\varepsilon_t)_{t\in\N}$ non-negative, $(\alpha_t)_{t\in\N}$ positive; \\
\textbf{For}{ $t=1,2,\dots$ {\normalfont\textbf{until} convergence}}{
\begin{align}
y_{t}\;&=x_{t}-\alpha_{t}\nabla g(x_{t}) \\
x_{t+1}\;&\in{\varepsilon_{t}}\text{-prox}_{\alpha_t f}(y_t) 
\end{align}
}
\end{algorithm}
\vspace{0.5cm}

 In the analysis below, we do not tackle the problem of how to compute the $\varepsilon\operatorname{-}$proximal point of  $f$ at any given $y$. Instead, we assume the existence of an oracle which provides us with an $\varepsilon$-proximal point. For instance, see Appendix \ref{app:inexprox}.  

By applying \Cref{cor:proxsub:2}, we investigate the decreasing behaviour of the objective function in \eqref{eq: ourproblem} for a sequence generated by \Cref{alg:epsFB}. 

\begin{proposition}
\label{prop:eps:FB estimate g-convex}
Let $\X$ be a Hilbert space and $\fonc{f,g}{\X}{(-\infty,+\infty]}$ satisfy \Cref{assu:problem}.
Let $\left(x_t\right)_{t\in\N}$ be the sequence generated by \Cref{alg:epsFB}. Then, for any $x\in \X$, 
\begin{equation}\label{eq:eps:FB_estimate}
\begin{aligned}
    (f+g)(x)   - (f+g)&(x_{t+1}) \geq \innerprod{\frac{x_t - x_{t+1}}{\alpha_t}}{x-x_{t+1}} \\
    &  - \frac{\rho}{2}\|x-x_{t+1}\|^2 - \frac{L_g}{2}\|x_{t}- x_{t+1}\|^2    
    - \varepsilon_t - \sqrt{\frac{2\varepsilon_t}{\alpha_t}}\| x - x_{t+1}\|.\\
    \end{aligned}
\end{equation}
Moreover,  when $2/\alpha_t > \rho +L_g$, for all $t\in\mathbb{N}$
\begin{equation}
\label{eq:decrease}
     \left(f+g\right)\left(x_t\right)-\left(f+g\right)\left(x_{t+1}\right) >  - \varepsilon_t - \sqrt{\frac{2\varepsilon_t}{\alpha_t}}\| x_t - x_{t+1}\|.
\end{equation}
\end{proposition}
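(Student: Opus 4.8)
The plan is to derive \eqref{eq:eps:FB_estimate} by turning the inexact proximal update into a proximal $\varepsilon$-subgradient inequality for $f$ and then absorbing the smooth part $g$ through its convexity together with the Descent Lemma. Since $x_{t+1}\in\varepsilon_t\text{-}\prox_{\alpha_t f}(y_t)$, I would first invoke \Cref{cor:proxsub:2}: for any $e\in\X$ with $\|e\|^2/(2\alpha_t)\le\varepsilon_t$, the vector $v_t:=(y_t-x_{t+1}-e)/\alpha_t$ lies in $\partial_{\rho/2}^{\,\varepsilon_t} f(x_{t+1})$. Unfolding \eqref{eq:def_prox_subdiff_eps} with $C=\rho/2$ then yields, for every $x\in\X$,
\[
 f(x)-f(x_{t+1})\ge \innerprod{v_t}{x-x_{t+1}}-\frac{\rho}{2}\|x-x_{t+1}\|^2-\varepsilon_t .
\]
Substituting $y_t=x_t-\alpha_t\nabla g(x_t)$ gives $v_t=\frac{x_t-x_{t+1}}{\alpha_t}-\nabla g(x_t)-\frac{e}{\alpha_t}$, so the $f$-inequality decomposes into the desired leading term $\innerprod{\frac{x_t-x_{t+1}}{\alpha_t}}{x-x_{t+1}}$, a term $-\innerprod{\nabla g(x_t)}{x-x_{t+1}}$ to be cancelled against the smooth part, and an inexactness term $-\innerprod{\frac{e}{\alpha_t}}{x-x_{t+1}}$.

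For the smooth part I would bound $g(x)-g(x_{t+1})$ from below using the gradient evaluated at the forward point $x_t$. Convexity of $g$ (\Cref{prop:subdiff}(a.ii) with $\rho=0$) gives $g(x)\ge g(x_t)+\innerprod{\nabla g(x_t)}{x-x_t}$, while the Descent Lemma (\Cref{prop:subdiff}(b.i)) applied to the pair $(x_{t+1},x_t)$ gives $g(x_{t+1})\le g(x_t)+\innerprod{\nabla g(x_t)}{x_{t+1}-x_t}+\frac{L_g}{2}\|x_t-x_{t+1}\|^2$. Subtracting these yields
\[
 g(x)-g(x_{t+1})\ge \innerprod{\nabla g(x_t)}{x-x_{t+1}}-\frac{L_g}{2}\|x_t-x_{t+1}\|^2 .
\]
Adding this to the $f$-inequality cancels the $\nabla g(x_t)$ cross-terms exactly and produces the $-\frac{L_g}{2}\|x_t-x_{t+1}\|^2$ contribution. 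I then bound the inexactness term by Cauchy--Schwarz, $-\innerprod{\frac{e}{\alpha_t}}{x-x_{t+1}}\ge -\frac{\|e\|}{\alpha_t}\|x-x_{t+1}\|$, and use $\|e\|\le\sqrt{2\alpha_t\varepsilon_t}$, hence $\|e\|/\alpha_t\le\sqrt{2\varepsilon_t/\alpha_t}$, which delivers the final term $-\sqrt{2\varepsilon_t/\alpha_t}\|x-x_{t+1}\|$ and completes \eqref{eq:eps:FB_estimate}.

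For \eqref{eq:decrease} I would specialise \eqref{eq:eps:FB_estimate} to $x=x_t$. Then $\innerprod{\frac{x_t-x_{t+1}}{\alpha_t}}{x_t-x_{t+1}}=\frac{1}{\alpha_t}\|x_t-x_{t+1}\|^2$, and collecting the two quadratic terms leaves the coefficient $\frac{1}{\alpha_t}-\frac{\rho}{2}-\frac{L_g}{2}$ in front of $\|x_t-x_{t+1}\|^2$. Under the step-size condition $2/\alpha_t>\rho+L_g$ this coefficient is strictly positive, so the quadratic term is nonnegative and may be dropped, leaving precisely the right-hand side of \eqref{eq:decrease}.

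The only delicate point is the bookkeeping in the smooth part: one must pair the convex lower bound for $g(x)$ with the descent upper bound for $g(x_{t+1})$, both anchored at the forward point $x_t$, so that the cross-terms $\innerprod{\nabla g(x_t)}{x-x_{t+1}}$ cancel exactly and only the controllable quadratic $\frac{L_g}{2}\|x_t-x_{t+1}\|^2$ survives; in parallel one must carry the error vector $e$ from \Cref{cor:proxsub:2} and control it uniformly via $\|e\|^2\le 2\alpha_t\varepsilon_t$. (Note that taking $e=0$ already produces a slightly stronger estimate without the last term; keeping $e$ general reproduces the stated form.)
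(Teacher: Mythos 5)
Your proposal is correct and follows essentially the same route as the paper: Lemma~\ref{cor:proxsub:2} to obtain the proximal $\varepsilon$-subgradient at $x_{t+1}$, the combination of convexity of $g$ with the Descent Lemma anchored at $x_t$ to cancel the $\nabla g(x_t)$ cross-terms, Cauchy--Schwarz with $\|e\|\le\sqrt{2\alpha_t\varepsilon_t}$ for the error term, and the substitution $x=x_t$ for \eqref{eq:decrease}. Your closing remark that $e=0$ is admissible in Lemma~\ref{cor:proxsub:2} and would sharpen the bound is also accurate.
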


\begin{proof}
By \Cref{alg:epsFB} and \Cref{cor:proxsub:2}, for any $t\in\mathbb{N}$, there exists $e_t\in \X$,  $\frac{\|e_t\|^2}{2\alpha_t}\leq \varepsilon_t$, such that
\begin{equation}
    \frac{x_t - \alpha_t \nabla g(x_t) - x_{t+1} - e_t}{\alpha_t} \in \partial_{\rho/2}^{\,\varepsilon_t} f(x_{t+1}).
\end{equation}
By definition of proximal ${\varepsilon\text{-subdifferential}}$,  for any $x\in\X$
\begin{align}\label{eq:inequality01}
    f\left(x\right)-f\left(x_{t+1}\right)	&\geq\left\langle \frac{x_{t}-x_{t+1}}{\alpha_{t}}-\nabla g\left(x_{t}\right) | x-x_{t+1}\right\rangle -\frac{\rho}{2}\left\Vert x-x_{t+1}\right\Vert ^{2} - \left\langle \frac{e_t}{\alpha_t} |  x - x_{t+1}\right\rangle -\varepsilon_t.
\end{align}
By the convexity of $g$ and Descent Lemma, we obtain
\begin{equation}
\begin{aligned}\label{eq:inequality00}
    \left\langle -\nabla g\left(x_{t}\right)\ |\ x-x_{t+1}\right\rangle 
    &\geq g\left(x_{t+1}\right) - g(x) 
    -\frac{L_{g}}{2}\left\Vert x_{t}-x_{t+1}\right\Vert ^{2}.
    \end{aligned}
\end{equation}
On the other side,  by applying the Cauchy-Schwarz inequality, for any $x\in\X$
\begin{equation}\label{eq:inequality2}
    \left\langle \frac{e_t}{\alpha_t}\ |\  x - x_{t+1}\right\rangle \leq \frac{1}{\alpha_t}\|e_t\|\| x - x_{t+1}\|
    \leq \frac{1}{\alpha_t}\sqrt{2\alpha_t\varepsilon_t }\| x - x_{t+1}\| \leq \sqrt{\frac{2\varepsilon_t}{\alpha_t}}\| x - x_{t+1}\|.
\end{equation}
By plugging \eqref{eq:inequality00} and \eqref{eq:inequality2} into \eqref{eq:inequality01} we obtain \eqref{eq:eps:FB_estimate}.

For the second part, since \eqref{eq:eps:FB_estimate} holds for any $x\in \X$, substituting $x=x_t$ yields
\begin{align}
    \left(f+g\right)\left(x_t\right)-\left(f+g\right)\left(x_{t+1}\right)&\geq \left(\frac{1}{\alpha_t}-\frac{\rho}{2}-\frac{ L_{g}}{2}\right) \left\Vert x_{t}-x_{t+1}\right\Vert ^{2} - \varepsilon_t - \sqrt{\frac{2\varepsilon_t}{\alpha_t}}\| x_t - x_{t+1}\|.
\end{align}
The assumption $2/\alpha_t > \rho +L_g$ yields
\begin{equation}
     \left(f+g\right)\left(x_t\right)-\left(f+g\right)\left(x_{t+1}\right)>  - \varepsilon_t - \sqrt{\frac{2\varepsilon_t}{\alpha_t}}\| x_t - x_{t+1}\|.
\end{equation}
\end{proof}

\begin{remark}\label{rem:FB estimate g-convex}
When $\varepsilon_t=0$ for every $t\in\N$, inequality \eqref{eq:decrease} from \Cref{prop:eps:FB estimate g-convex} reduces to
\begin{equation}\label{eq:FB estimate g-convex}
     \left(f+g\right)\left(x_t\right)-\left(f+g\right)\left(x_{t+1}\right)> 0,
\end{equation}
 \emph{i.e.\ }the objective values diminish in a strictly monotone way {(even if both $f$ and $g$ are weakly convex) (for finite-dimensional setting c.f. {\cite[Proposition 7]{bayram2015convergence}})}. If 
 $\inf( f+g)>-\infty$, then $(f+g)(x_t)$  converges as $t$ goes to infinity. In addition, we observe that the restriction that we have on the step size 
$\alpha_t < \dfrac{2}{L_g + \rho}$
generalises the one of the convex case (\emph{i.e.\ }when $\rho=0$) which is $\alpha_t < 2 / L_g$  (see \cite{combettes2005signal}).
\end{remark}

From now on, we consider fixed  {$\alpha_{t}=\alpha$ and $\varepsilon_{t}=\varepsilon$ for  $t\in\mathbb{N}$}. 
We adopt the following assumption.

\begin{assumption}\label{assu:conditions}
 We choose $\varepsilon \geq 0$ and $\alpha >0$ satisfying the following inequalities:
\begin{align} 
\label{eq:cond:epsilon}&\varepsilon<\frac{\mu^{2}}{2\left(\rho+1\right)}\min\left \{\frac{1}{L_g+1},\frac{1}{\rho+2}\right\},\\ 
\label{eq:cond:alpha:1}&\frac{2\varepsilon (\rho+1)}{\mu^2 -2\varepsilon(\rho+1)} \le \alpha < \min\left\{\frac{1}{L_g}, \frac{1}{\rho+1}\right\}.
\end{align}
\end{assumption}
The following quantities {play an important role in the convergence analysis in \Cref{sec:convergence:analysis}}:
\begin{align}
    E^{-} := \frac{\mu-\sqrt{\mu^{2}-2\varepsilon_{}\left(\rho+1\right)\frac{\alpha_{}+1}{\alpha_{}}}}{\rho+1}\text{\quad and\quad}
    E^{+} := \frac{\mu+\sqrt{\mu^{2}-2\varepsilon_{}\left(\rho+1\right)\frac{\alpha+1}{\alpha_{}}}}{\rho+1}. 
\end{align}

\begin{remark}
\label{rem:tubeE}
   We have $\tau_1(\varepsilon)\leq E^- < E^+ \leq \tau_2(\varepsilon)$,
{ $\tau_1(\varepsilon)$, where $\tau_2(\varepsilon)$ are as in 
\Cref{prop:eps:statpoint}.}
\end{remark}

\begin{remark}\label{rem:exact_case}

When $\varepsilon=0$, \Cref{assu:conditions} simplifies to 
  \begin{equation*}
 \alpha < \min \left\{ \frac{1}{L_g} , \frac{1}{\rho+1}\right\}. 
\end{equation*}
This implies $2 > \alpha(\rho + L_g)$
which  
{coincides with } the condition in {\eqref{eq:decrease}} in \Cref{prop:eps:FB estimate g-convex}.
To conclude, we observe that in the case of exact proximal computations ($\varepsilon=0$, {\emph{c.f} \Cref{rmk:statpoint}} ), 
\begin{equation*}
E^- = \tau_1(0)= 0 \quad \text{and}\quad E^+ = \tau_2(0)  = \frac{2\mu}{\rho}.
\end{equation*}
\end{remark}

\section{Convergence analysis}\label{sec:convergence:analysis}

The aim of this section is to show the results presented in \Cref{theo:Et constant convergence}, which describes the behaviour of the sequence $\text{dist}(x_t,S)$. 
In view of \Cref{rem:exact_case}, when $\varepsilon=0$, condition \eqref{eq:assuA} reduces to $0<\text{dist}(x_t,S)<\frac{2\mu}{\rho}$. 

Condition \eqref{eq:assuA} states that if, at some moment, the iterates generated by the algorithm get sufficiently close to the solution set, we can deduce convergence properties in terms of distance from the solution set.

The following result is crucial in the next sections to provide the (local) convergence of the iterates to a global solution of \eqref{eq: ourproblem}.
The case $\varepsilon=0$ is analyzed in more detail in \Cref{sec:convergence:exact}.

\begin{theorem} \label{theo:Et constant convergence}
 Let $\X$ be a Hilbert space, $\fonc{f,g}{\X}{(-\infty,+\infty]}$ satisfy \Cref{assu:problem} and  $f+g$ satisfy the sharpness condition \eqref{eq:sharpness:def} with constant $\mu>0$. Assume that $\varepsilon\geq0$, $\alpha>0$ are chosen according \Cref{assu:conditions}, 
and $\left(x_t\right)_{t\in\N}$ is  generated by \Cref{alg:epsFB}. 
The following holds true.
\begin{description} 
\item[--] If there exists $t_0 \in \N$ such that 
\begin{equation}\label{eq:assuA}
E^-<\operatorname{dist}(x_{t_0},S) < E^+,
\end{equation}
then there exists a constant $\zeta >1$ such that
\begin{equation}\label{eq:theo:conv_rate}
    (\forall t\geq t_0)\qquad \dist^2(x_{t},S)-(E^-)^2 \leq \left(\frac{1}{\zeta}\right)^{t-t_0}( \dist^2(x_{t_0},S)-(E^-)^2).
\end{equation}
Consequently,
       $\limsup_{t\to\infty} \dist(x_{t},S) \leq E^-.$
\item[--] If 
\begin{equation}\label{eq:assu1} (\forall t\geq t_0)\qquad E^- < \operatorname{dist}(x_{t},S) < E^+,
\end{equation}
then 
    $\lim_{t\to +\infty} \dist(x_t,S) = E^-.$
\end{description}
\end{theorem}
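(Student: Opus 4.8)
The plan is to reduce everything to a one–dimensional quadratic recursion for the scalar sequence $d_t:=\dist(x_t,S)$, whose two fixed points turn out to be exactly $E^-$ and $E^+$. Fix $t$ and let $\bar x_t\in S$ be a projection of $x_t$ onto the (closed) solution set, so $\|x_t-\bar x_t\|=d_t$. Instantiating \eqref{eq:eps:FB_estimate} at $x=\bar x_t$ and using that $\bar x_t\in S=\argmin(f+g)$ together with the sharpness of $f+g$, the left–hand side $(f+g)(\bar x_t)-(f+g)(x_{t+1})=\inf(f+g)-(f+g)(x_{t+1})$ is bounded above by $-\mu\,d_{t+1}$. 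Writing $w:=\|x_t-x_{t+1}\|$, $s:=\|x_{t+1}-\bar x_t\|$ and expanding the inner product via the polarization identity $\langle x_t-x_{t+1}\,|\,\bar x_t-x_{t+1}\rangle=\tfrac12(w^2+s^2-d_t^2)$, I obtain an inequality in $d_t,d_{t+1},w,s$ in which the coefficient of $w^2$ is $\tfrac1{2\alpha}-\tfrac{L_g}{2}>0$ (by $\alpha<1/L_g$), hence may be discarded, while $s\ge d_{t+1}$ because $\bar x_t\in S$.

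The delicate term is the inexactness contribution $\sqrt{2\varepsilon/\alpha}\,s$. I would absorb it through Young's inequality $\sqrt{2\varepsilon/\alpha}\,s\le \varepsilon/\alpha+\tfrac12 s^2$: this particular split is what turns the remaining $s^2$–coefficient into $\tfrac1{2\alpha}-\tfrac{\rho+1}{2}$, still positive thanks to $\alpha<\tfrac1{\rho+1}$, and collects the constants into $\varepsilon\tfrac{\alpha+1}{\alpha}$. Using $s\ge d_{t+1}$ on this positive $s^2$–term yields the scalar recursion
\[
\Bigl(\tfrac1{2\alpha}-\tfrac{\rho+1}{2}\Bigr)d_{t+1}^2+\mu\,d_{t+1}\le \tfrac1{2\alpha}d_t^2+\varepsilon\tfrac{\alpha+1}{\alpha}.
\]
Now $E^\pm$ are precisely the roots of $(\rho+1)E^2-2\mu E+2\varepsilon\tfrac{\alpha+1}{\alpha}$, equivalently the two fixed points of this recursion; in particular $E^-$ satisfies the displayed inequality with equality. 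Subtracting that identity and factoring $d_{t+1}^2-(E^-)^2=(d_{t+1}-E^-)(d_{t+1}+E^-)$ gives
\[
\Bigl(\tfrac1{2\alpha}-\tfrac{\rho+1}{2}+\tfrac{\mu}{d_{t+1}+E^-}\Bigr)\bigl(d_{t+1}^2-(E^-)^2\bigr)\le \tfrac1{2\alpha}\bigl(d_t^2-(E^-)^2\bigr).
\]

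The heart of the argument, and its main obstacle, is extracting a \emph{uniform} contraction factor $\zeta>1$. Using the sum–of–roots relation $E^-+E^+=2\mu/(\rho+1)$ one sees that the bracket equals $\tfrac1{2\alpha}$ exactly when $d_{t+1}=E^+$, so the per–step factor degenerates to $1$ as $d_{t+1}\uparrow E^+$: a single step contracts only while $d_{t+1}$ stays bounded away from $E^+$. To control this I would first read off from the recursion (whose admissible $d_{t+1}$ is bounded by an increasing function of $d_t$ agreeing with the identity at $E^-$ and $E^+$) that, starting from $E^-<d_{t_0}<E^+$, the sequence is non-increasing and remains in $(E^-,E^+)$ as long as it stays above $E^-$, while the region $\{d<E^-\}$ is absorbing. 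In particular $d_{t+1}\le d_{t_0}<E^+$ throughout the contraction phase, so $\tfrac{\mu}{d_{t+1}+E^-}\ge\tfrac{\mu}{d_{t_0}+E^-}>\tfrac{\rho+1}{2}$, which fixes
\[
\zeta:=2\alpha\Bigl(\tfrac1{2\alpha}-\tfrac{\rho+1}{2}+\tfrac{\mu}{d_{t_0}+E^-}\Bigr)=1-\alpha(\rho+1)+\tfrac{2\alpha\mu}{d_{t_0}+E^-}>1 .
\]

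Feeding this back into the factored inequality gives $d_{t+1}^2-(E^-)^2\le \tfrac1\zeta\bigl(d_t^2-(E^-)^2\bigr)$ whenever $d_t\ge E^-$; for $d_t<E^-$ the claimed bound holds trivially, its right-hand side being nonnegative and its left-hand side negative, and this branch is absorbing, so the estimate propagates. A straightforward induction then yields \eqref{eq:theo:conv_rate} for all $t\ge t_0$, and letting $t\to\infty$ gives $\limsup_{t}d_t\le E^-$. For the second assertion, the stronger hypothesis $E^-<d_t<E^+$ for \emph{every} $t\ge t_0$ provides $\liminf_{t}d_t\ge E^-$, which combined with $\limsup_{t}d_t\le E^-$ forces $\lim_{t}d_t=E^-$.
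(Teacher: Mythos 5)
Your proposal is correct and follows essentially the same route as the paper: your scalar quadratic recursion is the paper's inequality \eqref{eq:distance with eta and eps}, your factored per-step contraction is \Cref{prop:estimation with Et}, the invariance of the regions $\{d\le E^+\}$ and $\{d\le E^-\}$ that you read off from the recursion is \Cref{lem:inthetube:const}, and the uniform factor $\zeta$ extracted from the monotonicity of $d_t$ while it stays above $E^-$ is exactly the paper's $\zeta_{t_0}$ in \eqref{eq:zetas}. The only (shared, harmless) imprecision is the use of an exact nearest point $\bar x_t\in S$, which in an infinite-dimensional Hilbert space should be replaced by an approximate minimiser of $\|x_t-\cdot\|$ over $S$ followed by a limit, since the negative term $-\tfrac{1}{2\alpha}\|x_t-\bar x\|^2$ is the one that must be driven down to $-\tfrac{1}{2\alpha}\dist^2(x_t,S)$.
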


The following two lemmas are used in the proof of \Cref{theo:Et constant convergence}.

\begin{lemma}\label{lem:inthetube:const}
 
Let the assumptions of \Cref{theo:Et constant convergence} be satisfied. We have the following:
\begin{equation}
\label{eq:dist:xtplus}
(\exists t_0\in\N)\quad  \dist\left(x_{t_0},S\right)\leq
E^+ \Rightarrow \quad (\forall t\geq t_0) \quad \dist\left(x_{t},S\right)\leq
E^+.
\end{equation}

\begin{equation}
\label{eq:dist:xtminus}
(\exists t_0 \in\N) \quad \dist\left(x_{t_0},S\right)\leq
E^- \Rightarrow \quad (\forall t\geq t_0) \quad \dist\left(x_{t},S\right)\leq
E^-. 
\end{equation}
\end{lemma}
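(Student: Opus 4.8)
The plan is to prove both implications by induction on $t$, so it suffices to establish the one-step statements $\dist(x_t,S)\le E^{+}\Rightarrow\dist(x_{t+1},S)\le E^{+}$ and the analogous one with $E^{-}$. Fix $t$ and abbreviate $d_t=\dist(x_t,S)$, $d_{t+1}=\dist(x_{t+1},S)$. Since $S=\argmin_{x\in\X}(f+g)(x)$ need not be proximinal, I would choose a near-projection, i.e. a point $p\in S$ with $\|x_t-p\|$ arbitrarily close to $d_t$, and pass to the limit at the end. Applying \Cref{prop:eps:FB estimate g-convex} with $x=p$, the left-hand side of \eqref{eq:eps:FB_estimate} equals $\inf(f+g)-(f+g)(x_{t+1})$, and the global sharpness \eqref{eq:sharpness:def} gives $(f+g)(x_{t+1})-\inf(f+g)\ge\mu\,d_{t+1}$, so that whole left-hand side is $\le-\mu\,d_{t+1}$. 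Expanding $\innerprod{x_t-x_{t+1}}{p-x_{t+1}}$ by the polarization identity (which turns $\|x_t-p\|^2$ into $d_t^2$) and rearranging yields
\[
\tfrac{1-\alpha L_g}{2\alpha}\|x_t-x_{t+1}\|^2+\tfrac{1-\alpha\rho}{2\alpha}\|p-x_{t+1}\|^2+\mu\,d_{t+1}\le\tfrac{d_t^2}{2\alpha}+\varepsilon+\sqrt{\tfrac{2\varepsilon}{\alpha}}\,\|p-x_{t+1}\|.
\]

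Next I would discard the $\|x_t-x_{t+1}\|^2$ term, whose coefficient is nonnegative because $\alpha<1/L_g$ by \Cref{assu:conditions}. The key step is to dominate the inexactness term via the elementary bound $ab\le\tfrac12(a^2+b^2)$ applied with $a=\|p-x_{t+1}\|$ and $b=\sqrt{2\varepsilon/\alpha}$, giving $\sqrt{2\varepsilon/\alpha}\,\|p-x_{t+1}\|\le\tfrac12\|p-x_{t+1}\|^2+\tfrac{\varepsilon}{\alpha}$; this precise split is what produces the constant $\varepsilon\tfrac{\alpha+1}{\alpha}$ and lowers the coefficient of $\|p-x_{t+1}\|^2$ from $\tfrac{1-\alpha\rho}{2\alpha}$ to $\tfrac{1-\alpha(\rho+1)}{2\alpha}$, which is still positive exactly because $\alpha<1/(\rho+1)$. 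Using $\|p-x_{t+1}\|\ge d_{t+1}$ together with the positivity of this coefficient, I replace $\|p-x_{t+1}\|^2$ by $d_{t+1}^2$ and reach the scalar recursion
\[
\Phi(d_{t+1}):=\tfrac{1-\alpha(\rho+1)}{2\alpha}\,d_{t+1}^2+\mu\,d_{t+1}\le\tfrac{d_t^2}{2\alpha}+\varepsilon\tfrac{\alpha+1}{\alpha}.
\]

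Finally I would read off both invariances from this single recursion. The map $\Phi$ is strictly increasing on $[0,\infty)$ (positive quadratic and linear coefficients), and a direct computation shows that $E^{-}$ and $E^{+}$, being the two roots of $q(s)=\tfrac{\rho+1}{2}s^2-\mu s+\varepsilon\tfrac{\alpha+1}{\alpha}$, both satisfy $\Phi(E^{\pm})=\tfrac{(E^{\pm})^2}{2\alpha}+\varepsilon\tfrac{\alpha+1}{\alpha}$. Hence if $d_t\le E^{+}$ the right-hand side of the recursion is $\le\Phi(E^{+})$, so $\Phi(d_{t+1})\le\Phi(E^{+})$ and monotonicity forces $d_{t+1}\le E^{+}$; the verbatim argument with $E^{-}$ proves \eqref{eq:dist:xtminus}. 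Here \Cref{assu:conditions} is exactly what guarantees $\mu^2-2\varepsilon(\rho+1)\tfrac{\alpha+1}{\alpha}\ge0$, so that $E^{\pm}$ are well-defined and nonnegative.

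I expect the main obstacle to be the interplay between $\|p-x_{t+1}\|$ and $d_{t+1}=\dist(x_{t+1},S)$ combined with the sign of the sharpness contribution: sharpness only delivers $+\mu\,d_{t+1}$, which at first sight looks opposite to the $-\mu s$ appearing in $q$, so one cannot simply rearrange into an inequality of the form $q(d_{t+1})\le0$. The resolution is to argue through monotonicity of $\Phi$ and the fact that $E^{\pm}$ are fixed points of the recursion rather than trying to factor $q$ directly. Getting the constant in the AM-GM bound right (the value matching $\rho+1$ and $\varepsilon\tfrac{\alpha+1}{\alpha}$) is what makes the algebra close with exactly the coefficients defining $E^{\pm}$. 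A secondary technical point is the possible non-proximinality of $S$ in a general Hilbert space, which I would handle by working with an infimizing sequence $p_n\in S$ satisfying $\|x_t-p_n\|\to d_t$ and taking limits in the recursion.
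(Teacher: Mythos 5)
Your proposal is correct and follows essentially the same route as the paper's proof in Appendix B: apply \eqref{eq:eps:FB_estimate} at a point of $S$, expand the inner product by the polarization identity, absorb the inexactness term via Young's inequality with exactly the split that produces $\varepsilon\tfrac{\alpha+1}{\alpha}$ and the coefficient $\tfrac{1-\alpha(\rho+1)}{2\alpha}$, and invoke sharpness to arrive at the scalar recursion \eqref{eq:distance with eta and eps}. The only differences are cosmetic and in your favour: you close the argument by monotonicity of $\Phi$ together with the fixed-point identity $q(E^{\pm})=0$ rather than by explicitly computing the discriminant of the quadratic inequality as the paper does, and your near-projection/limit treatment of a possibly non-proximinal $S$ is more careful than the paper's choice of $\overline{x}\in S$.
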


\begin{proof}
See \Cref{sec:lem:inthetube:const}.
\end{proof}

\begin{lemma}
\label{prop:estimation with Et}
{ 
Under the assumptions of \Cref{theo:Et constant convergence},
if, for some $ t_0\in \N$, we have 
\begin{equation}
\text{dist}\left(x_{t_0},S\right)\leq
E^+,
\end{equation}
then for every $t\geq t_0$ there exists $\zeta_{t+1}\geq 1$ such that 
\begin{equation}
\label{eq:eps:contraction}
    \zeta_{t+1} \left(\operatorname{dist}^2 (x_{t+1},S) -(E^-)^2\right) \leq  \operatorname{dist}^2 (x_{t},S)-(E^-)^2.
\end{equation}
Consequently, $\dist(x_{t+1},S) \leq \dist(x_t,S)$ provided $\dist(x_{t+1},S) >E^-$.}
\end{lemma}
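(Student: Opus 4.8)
The plan is to convert the one-step descent estimate \eqref{eq:eps:FB_estimate} into a scalar recursion for $d_t:=\dist(x_t,S)$ and then recognise $E^{\pm}$ as the roots of the resulting quadratic. Fix $t\ge t_0$. Given $\eta>0$, I would pick $\bar x\in S$ with $\|x_t-\bar x\|\le d_t+\eta$ (a minimizing sequence for $\dist(x_t,S)$, which sidesteps assuming that the metric projection onto the possibly nonconvex set $S$ exists), and apply \eqref{eq:eps:FB_estimate} with $x=\bar x$. Since $\bar x\in S=\argmin(f+g)$, the left-hand side is $\inf(f+g)-(f+g)(x_{t+1})$, and the sharpness condition \eqref{eq:sharpness:def} bounds it above by $-\mu\,d_{t+1}$. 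Chaining the two gives $\mu\,d_{t+1}\le$ (minus the right-hand side of \eqref{eq:eps:FB_estimate}), into which I substitute the polarisation identity $-\innerprod{\tfrac{x_t-x_{t+1}}{\alpha}}{\bar x-x_{t+1}}=\tfrac{1}{2\alpha}\bigl(\|x_t-\bar x\|^2-\|x_t-x_{t+1}\|^2-\|\bar x-x_{t+1}\|^2\bigr)$.

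Next I would simplify the right-hand side using \Cref{assu:conditions}. The cross term is split by Young's inequality with unit weight, $\sqrt{2\varepsilon/\alpha}\,\|\bar x-x_{t+1}\|\le \tfrac12\|\bar x-x_{t+1}\|^2+\tfrac{\varepsilon}{\alpha}$; the weight $1$ is the crucial choice, since it is exactly what produces both the constant $\varepsilon\tfrac{\alpha+1}{\alpha}$ and the coefficient $\tfrac{\rho+1}{2}-\tfrac{1}{2\alpha}$ hidden in $E^{\pm}$. The bound $\alpha<1/L_g$ renders the coefficient of $\|x_t-x_{t+1}\|^2$ negative, so that term is dropped, while $\alpha<1/(\rho+1)$ renders the coefficient of $\|\bar x-x_{t+1}\|^2$ negative, so I may use $\|\bar x-x_{t+1}\|\ge d_{t+1}$ there. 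Inserting $\|x_t-\bar x\|\le d_t+\eta$ and letting $\eta\downarrow0$ yields $\mu d_{t+1}\le \tfrac{1}{2\alpha}d_t^2+\bigl(\tfrac{\rho+1}{2}-\tfrac{1}{2\alpha}\bigr)d_{t+1}^2+\varepsilon\tfrac{\alpha+1}{\alpha}$. Multiplying by $2\alpha$ and writing $c:=1-\alpha(\rho+1)\in(0,1)$ puts this in the form $c\,d_{t+1}^2+2\alpha\mu\,d_{t+1}\le d_t^2+2\varepsilon(\alpha+1)$.

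I would then identify $E^-,E^+$ as the roots of $\alpha(\rho+1)d^2-2\alpha\mu d+2\varepsilon(\alpha+1)=0$; in particular $(1-c)(E^-)^2-2\alpha\mu E^-+2\varepsilon(\alpha+1)=0$ and $E^-+E^+=\tfrac{2\mu}{\rho+1}$. Eliminating $2\varepsilon(\alpha+1)$ via the first relation and subtracting $(E^-)^2$ from both sides recasts the scalar inequality as the factored form $(d_{t+1}-E^-)\bigl[c(d_{t+1}+E^-)+2\alpha\mu\bigr]\le d_t^2-(E^-)^2$. Setting $\zeta_{t+1}:=c+\tfrac{2\alpha\mu}{d_{t+1}+E^-}$ (the degenerate case $d_{t+1}=E^-=0$ is handled trivially, as both sides of \eqref{eq:eps:contraction} vanish) makes the left-hand side exactly $\zeta_{t+1}\bigl(d_{t+1}^2-(E^-)^2\bigr)$, which is \eqref{eq:eps:contraction}.

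Finally I must verify $\zeta_{t+1}\ge1$. A direct computation shows $\zeta_{t+1}\ge1$ is equivalent to $d_{t+1}+E^-\le \tfrac{2\mu}{\rho+1}=E^-+E^+$, i.e. to $d_{t+1}\le E^+$. The hypothesis $\dist(x_{t_0},S)\le E^+$ together with \Cref{lem:inthetube:const}, eq.\ \eqref{eq:dist:xtplus}, guarantees $\dist(x_t,S)\le E^+$ for all $t\ge t_0$, so in particular $d_{t+1}\le E^+$ and $\zeta_{t+1}\ge1$. The ``consequently'' claim is then immediate: if $d_{t+1}>E^-$ then $d_{t+1}^2-(E^-)^2>0$, so \eqref{eq:eps:contraction} with $\zeta_{t+1}\ge1$ gives $d_{t+1}^2-(E^-)^2\le d_t^2-(E^-)^2$, hence $d_{t+1}\le d_t$. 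I expect the only real obstacle to be the sign bookkeeping in the second paragraph---tracking which coefficients turn negative under \Cref{assu:conditions} (so that terms may be discarded or $\|\bar x-x_{t+1}\|$ replaced by $d_{t+1}$) and fixing the Young weight that reproduces the exact constants defining $E^{\pm}$; the factorisation and the invocation of \Cref{lem:inthetube:const} are then routine.
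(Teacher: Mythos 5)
Your proposal is correct and follows essentially the same route as the paper's proof: chain the descent estimate \eqref{eq:eps:FB_estimate} with sharpness and the polarisation identity to obtain the scalar quadratic recursion \eqref{eq:distance with eta and eps}, identify $E^{\pm}$ as its fixed points, subtract and factor to produce the same $\zeta_{t+1}=1-\alpha\rho-\alpha+\tfrac{2\alpha\mu}{\dist(x_{t+1},S)+E^-}$, and invoke \Cref{lem:inthetube:const} to get $\dist(x_{t+1},S)\le E^+$ and hence $\zeta_{t+1}\ge 1$. The only (welcome) refinement is your use of an $\eta$-approximate nearest point in $S$ with $\eta\downarrow 0$, which is cleaner than the paper's phrasing of the choice of $\overline{x}$.
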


\begin{proof}
See \Cref{sec:prop:estimation with Et}.
\end{proof}
\Cref{lem:inthetube:const} and \Cref{prop:estimation with Et} hold true when 
\eqref{eq:sharpness:def} is replaced by \eqref{eq:sharpness:local:def} for a sufficiently large $\delta>0$.
{
\begin{proof}[\textbf{Proof of \Cref{theo:Et constant convergence}}]

By \eqref{eq:eps:contraction},
there exists $\zeta_{t+1}\geq 1$ such that
\begin{equation}
 (\forall t\geq t_0)\qquad  \zeta_{t+1} \left(\operatorname{dist}^2 (x_{t+1},S) -(E^-)^2\right) \leq  \operatorname{dist}^2 (x_{t},S)-(E^-)^2.
\end{equation}
From this, we infer that, for every $t\geq t_0$, the following holds:
\begin{equation}\label{eq:inequality_prod}
\begin{aligned}
\operatorname{dist}^2 (x_{t+1},S) -(E^-)^2 &\leq  \frac{1}{\zeta_{t+1}}\left(\operatorname{dist}^2 (x_{t},S)-(E^-)^2 \right) \leq  \left(\prod_{s=t_0}^{t}\frac{1}{\zeta_{s+1}}\right)\left(\operatorname{dist}^2 (x_{t_0},S)-(E^-)^2 \right).
\end{aligned}\end{equation}
Denote by $t_1$, the first iterate $t_1 > t_0$ such that $\dist(x_{ t_1},S) < E^-$.
By \Cref{lem:inthetube:const}, $\dist(x_t,S)\leq E^-$, for every $t\geq t_1$, 
 which implies that 
 the LHS in \eqref{eq:inequality_prod} is non-positive,
\begin{equation}\label{eq:conv:-}
\left( t\geq t_1\right)\qquad \dist^2(x_{t},S) - (E^-)^2 \leq 0 \end{equation}
On the other side, by our choice,
$\dist(x_t,S) > E^-$
for 
$t_0\leq t<  t_1$.  
By \Cref{prop:estimation with Et}, for every $  t_0< t<  t_1$, we have
$\operatorname{dist}(x_{t},S) \leq \operatorname{dist}(x_{t-1},S)$,
implying that
\begin{equation}\label{eq:dist:ineq} 
    \operatorname{dist}(x_{t},S) \leq 
 \operatorname{dist}(x_{t-1},S) \leq \dots \leq \operatorname{dist}(x_{t_0},S).
\end{equation}
We can therefore define a lower-bound for $\zeta_{t}$ as follows, for $  t_0\leq t<  t_1$
\begin{equation}\label{eq:zetas}
\begin{aligned}
    \zeta_{t} &= 1-\alpha \rho -\alpha+ \frac{2\alpha \mu}{\operatorname{dist}(x_{t},S)+ E^-} \geq  1-\alpha \rho -\alpha+ \frac{2\alpha \mu}{\operatorname{dist}(x_{t_0},S)+ E^-} = \zeta_{t_0} \\
    &\quad> 1-\alpha \rho -\alpha+ \frac{2\alpha \mu}{E^+ + E^-} = 1.
\end{aligned}
\end{equation}
The product in the RHS of \eqref{eq:inequality_prod} can be estimated 
as follows
\begin{equation}
     \left(\prod_{s=t_0}^{t}\frac{1}{\zeta_{s+1}}\right) \leq  \left(\frac{1}{\zeta_{t_0}}\right)^{t-t_0+1},
\end{equation}
implying that 
 \begin{equation}\label{eq:conv:+}
 \left (  t_0\leq t<  t_1 \right) \qquad    \dist^2(x_{t},S) - (E^-)^2 \leq \left(\frac{1}{\zeta_{t_0}}\right)^{t-t_0}\left(\operatorname{dist}^2 (x_{t_0},S)-(E^-)^2 \right).  
 \end{equation}
 To conclude, we combine  \eqref{eq:conv:-} (for $t > {t_1}$) with \eqref{eq:conv:+} (for $  t_0\leq t<  t_1$) to obtain
 \begin{equation}\label{eq:convergence_inequality}
(\forall t\geq t_0)   \qquad \dist^2(x_{t},S) - (E^-)^2 \leq \left(\frac{1}{\zeta_{t_0}}\right)^{t-t_0}\left(\operatorname{dist}^2 (x_{t_0},S)-(E^-)^2 \right). 
 \end{equation}
 Passing to the $\limsup$ as $t\to \infty$ we conclude that
 \begin{equation}
     \begin{split}
         \limsup_{t\to\infty} \left(\dist^2(x_{t},S) - (E^-)^2 \right)\leq \limsup_{t\to\infty} \left(\frac{1}{\zeta_{t_0}}\right)^{t-t_0}\left(\operatorname{dist}^2 (x_{t_0},S)-(E^-)^2 \right) \\
          = \left(\operatorname{dist}^2 (x_{t_0},S)-(E^-)^2 \right) \lim_{t\to\infty} \left(\frac{1}{\zeta_{t_0}}\right)^{t-t_0}= 0,
     \end{split}
 \end{equation}
where the last equality is due to the fact that $\zeta_{t_0}>1$ as shown in \eqref{eq:zetas}.  We then conclude that
 \begin{equation}
       \limsup_{t\to\infty} \dist(x_{t},S) \leq E^-.
 \end{equation}
 
To show the second part of the statement, we notice that by combining \eqref{eq:assu1} 
with \eqref{eq:theo:conv_rate}, we get
\begin{equation}\label{eq:dist_e_}
    (\forall t\geq t_0) \qquad 0 <\dist^2(x_{t},S)-(E^-)^2 \leq \left(\frac{1}{\zeta}\right)^{t-t_0}( \dist^2(x_{t_0},S)-(E^-)^2),
\end{equation} 
with the RHS non-negative and monotonically decreasing to $0$ as $t\to\infty$. By \eqref{eq:assu1}, the LHS is positive, and \eqref{eq:dist_e_} implies that
\begin{equation*}
    \lim_{t\to \infty} \operatorname{dist}(x_{t},S)  = E^-,
\end{equation*}
which concludes the proof.
\end{proof}

{By \Cref{theo:Et constant convergence}, the distance of the sequence $(x_n)_{n\in \N}$ to the solution set is bounded in the neighbourhood of the solution set. Condition \eqref{eq:assu1}  ensures that the distance of the sequence $(x_n)_{n\in\N}$  to the solution set converges. 

Moreover, if condition \eqref{eq:assu1} is satisfied, then we can obtain the strong convergence of the sequence $(x_t)_{t\in\N}$.}
\begin{corollary}\label{cor:eps:strong_conv}
    Under the assumptions of \Cref{theo:Et constant convergence}, if \eqref{eq:assu1} holds, then $(x_t)_{t\in\N}$ converges. 
\end{corollary}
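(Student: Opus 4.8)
The plan is to prove that $(x_t)_{t\in\N}$ is a Cauchy sequence; since $\X$ is complete, this yields strong convergence to some $x^\star\in\X$. The only ingredient needed beyond \Cref{theo:Et constant convergence} is control of the successive increments $\|x_{t+1}-x_t\|$. From \Cref{theo:Et constant convergence}, under \eqref{eq:assu1} we already know that $\dist(x_t,S)\to E^-$ and, more precisely, that $\dist^2(x_t,S)-(E^-)^2$ decays geometrically, i.e. $\dist^2(x_t,S)-(E^-)^2\le \zeta^{-(t-t_0)}\big(\dist^2(x_{t_0},S)-(E^-)^2\big)$ for some $\zeta>1$ (this is \eqref{eq:theo:conv_rate}). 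I would upgrade this rate on distances into a summable bound on the increments.

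The key step is to revisit the fundamental estimate \eqref{eq:eps:FB_estimate} of \Cref{prop:eps:FB estimate g-convex} with the test point $x=\bar x_t$, where $\bar x_t\in S$ is a nearest point to $x_t$ (so $\|x_t-\bar x_t\|=\dist(x_t,S)$, which exists since $S$ is closed). First I would expand the inner product by the polarization identity $\langle x_t-x_{t+1}\mid \bar x_t-x_{t+1}\rangle=\tfrac12\big(\|x_t-x_{t+1}\|^2+\|\bar x_t-x_{t+1}\|^2-\dist^2(x_t,S)\big)$, and I would bound the left-hand side using that $\bar x_t$ is a global minimiser together with the sharpness inequality \eqref{eq:sharpness:def}, namely $(f+g)(\bar x_t)-(f+g)(x_{t+1})\le-\mu\dist(x_{t+1},S)$. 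After rearranging, the term $\tfrac12\big(\tfrac{1}{\alpha}-L_g\big)\|x_{t+1}-x_t\|^2$ is isolated on one side; its coefficient is strictly positive because $\alpha<1/L_g$ by \Cref{assu:conditions}. This leaves an estimate bounding $\|x_{t+1}-x_t\|^2$ by a combination of $\dist^2(x_t,S)$, $\dist^2(x_{t+1},S)$, $\dist(x_{t+1},S)$ and inexactness terms in $\varepsilon$, whose non-error part is governed by the quantities already shown to converge to $E^-$ at a geometric rate.

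Finally, I would argue that this right-hand side is dominated by a geometrically decaying sequence, so that $\|x_{t+1}-x_t\|\le C\lambda^{t}$ with $\lambda\in(0,1)$, whence summing the geometric series gives $\sum_t\|x_{t+1}-x_t\|<\infty$ and $(x_t)$ is Cauchy. I expect the main obstacle to lie precisely in this last control in the \emph{inexact} regime: the residual created by the constant accuracy $\varepsilon$ and by the cross-term $\sqrt{2\varepsilon/\alpha}\,\|\bar x_t-x_{t+1}\|$ does not, a priori, decay, so one must exploit the exact calibration of $\varepsilon$ and $\alpha$ in \eqref{eq:cond:epsilon}--\eqref{eq:cond:alpha:1} together with the algebraic identity defining $E^-$ (which forces the limiting residual to equal $-\tfrac12\big(E^--\sqrt{2\varepsilon/\alpha}\big)^2\le 0$) in order to absorb these terms into the geometric part. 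In the exact case $\varepsilon=0$ one has $E^-=0$ and the estimate collapses to $\|x_{t+1}-x_t\|^2\lesssim \dist^2(x_t,S)$, so that summability of the increments—and therefore strong convergence—follows transparently from \eqref{eq:theo:conv_rate}.
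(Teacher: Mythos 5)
Your proposal is correct and follows essentially the same route as the paper: the paper's proof likewise extracts the term $(1-\alpha L_g)\|x_t-x_{t+1}\|^2$ from the inequality \eqref{eq:eps distance square estimate with xi} (which is exactly the estimate you re-derive from \eqref{eq:eps:FB_estimate} via polarization, sharpness, Young's inequality, and subtraction of the quadratic identity \eqref{eq:equation_Et} defining $E^-$), drops the positive term $\zeta_{t+1}\bigl(\dist^2(x_{t+1},S)-(E^-)^2\bigr)$ using \eqref{eq:assu1}, and bounds the remainder by the geometric decay \eqref{eq:theo:conv_rate} to conclude that $\sum_t\|x_t-x_{t+1}\|<\infty$ and hence that the sequence is Cauchy. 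The only difference is presentational: the paper cites the already-established intermediate inequality rather than rebuilding it.
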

\begin{proof}
By \eqref{eq:eps distance square estimate with xi}, (see \Cref{sec:prop:estimation with Et}),
there exists $\zeta_{t+1}>1$ such that
\begin{equation}
\zeta_{t+1} \left(\operatorname{dist}^2 (x_{t+1},S) -(E^-)^2\right) +(1-\alpha L_g) \Vert x_t -x_{t+1}\Vert^2 \leq  \operatorname{dist}^2 (x_{t},S)-(E^-)^2. 
\label{eq: dist const param with xt1}
\end{equation}
By \eqref{eq:assu1}, the first term of the LHS of \eqref{eq: dist const param with xt1} is positive, which implies
\begin{equation}
(1-\alpha L_g) \Vert x_t -x_{t+1}\Vert^2 \leq  \operatorname{dist}^2 (x_{t},S)-(E^-)^2. 
\end{equation}
By  \Cref{theo:Et constant convergence}, the RHS can be estimated with a constant $\zeta >1$
\begin{equation}
(1-\alpha L_g) \Vert x_t -x_{t+1}\Vert^2 \leq \left(\frac{1}{\zeta}\right)^{t-t_0} 
 \left[\operatorname{dist}^2 (x_{t_0},S)-(E^-)^2 \right].
 \label{eq: xt const param zeta}
\end{equation}
As both sides of \eqref{eq: xt const param zeta} are non-negative for all $t\geq t_0$, 
\begin{equation}\label{eq:sum}
\sqrt{1-\alpha L_g} \sum_{t\geq t_0}^T\Vert x_t -x_{t+1}\Vert \leq 
 \sqrt{\operatorname{dist}^2 (x_{t_0},S)-(E^-)^2 } \sum_{t\geq t_0}^T \left(\frac{1}{\sqrt{\zeta}}\right)^{t-t_0}.
\end{equation}
We  have $\sqrt{\zeta}>1$
and $\sqrt{1-\alpha L_g}>0$. When  $T\to\infty$, the RHS of \eqref{eq:sum} is finite, which implies that 
\begin{equation}
\sum_{t\geq t_0}^{\infty} \Vert x_t -x_{t+1}\Vert < +\infty.
\end{equation}
By virtue of {\cite[Theorem 1]{Bolte2014_Proximal}}, we conclude that $(x_t)_{t\in\N}$ is a Cauchy sequence so it converges.
\end{proof}

\section{Convergence to global solutions in the exact case}\label{sec:convergence:exact}

 In the case of exact proximal computations, we can improve \Cref{theo:Et constant convergence} by admitting a larger upper bound for the step size $\alpha$. \Cref{lem:inthetube:const} and \Cref{prop:estimation with Et} take the form presented in the following lemma. 
\begin{lemma}
\label{cor:1st result of inexact case}
 Let $\X$ be a Hilbert space, $\fonc{f,g}{\X}{(-\infty,+\infty]}$ satisfy \Cref{assu:problem} and  $f+g$ satisfy the sharpness condition \eqref{eq:sharpness:def} with constant $\mu>0$. Let $\left(x_t\right)_{t\in\N}$ be  generated by \Cref{alg:epsFB} with $\varepsilon = 0$ and  the step size  $\alpha>0$ satisfying the condition
\begin{equation*}
(\forall t\in\N)\qquad \alpha < \min \left\{ \frac{1}{L_g} , \frac{1}{\rho}\right\}.
\end{equation*}
For every $t\geq 0$, either $\operatorname{dist}(x_{t+1},S)=0$ or 
$\zeta_{t+1} \operatorname{dist}^2(x_{t+1},S) \leq \operatorname{dist}^2(x_t,S)$,
where 
\begin{equation}
\label{eq:xi_exact}
\zeta_{t+1} =  1  - \alpha \rho +\frac{2\alpha \mu}{\dist (x_{t+1},S)}.
\end{equation}
Moreover, if $\dist(x_{t_0},S) <\frac{2\mu}{\rho}$, for some $t_0\in\N$,  then    

\begin{equation}\label{eq:Inequality2murho}
   \dist(x_{t},S) < \frac{2\mu}{\rho},
\end{equation}
and $\zeta_{t+1} > 1$ for all $t\geq t_0$.
\end{lemma}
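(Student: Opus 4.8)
The plan is to specialise the descent estimate \eqref{eq:eps:FB_estimate} of \Cref{prop:eps:FB estimate g-convex} to the exact case $\varepsilon=0$ and test it against the solution set $S$. Concretely, I would fix $t$, pick $\bar{x}\in S$ (using an infimizing sequence for $\dist(x_t,S)$ if the projection onto $S$ is not attained), and substitute $x=\bar{x}$ into \eqref{eq:eps:FB_estimate}. Since $\bar{x}$ is a global minimiser, $(f+g)(\bar{x})=\inf(f+g)$, so the left-hand side is controlled from above by the global sharpness \eqref{eq:sharpness:def} applied at $x_{t+1}$, namely $(f+g)(\bar{x})-(f+g)(x_{t+1})\leq-\mu\dist(x_{t+1},S)$. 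This converts \eqref{eq:eps:FB_estimate} into an upper bound for $\tfrac1\alpha\langle x_t-x_{t+1}\,|\,\bar{x}-x_{t+1}\rangle$ in terms of $\mu\dist(x_{t+1},S)$, $\|\bar{x}-x_{t+1}\|^2$, and $\|x_t-x_{t+1}\|^2$.

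Next I would rewrite the inner product by the polarisation identity $\langle x_t-x_{t+1}\,|\,\bar{x}-x_{t+1}\rangle=\tfrac12\bigl(\|x_t-x_{t+1}\|^2+\|\bar{x}-x_{t+1}\|^2-\|\bar{x}-x_t\|^2\bigr)$ and solve for $\|\bar{x}-x_t\|^2$. Choosing $\bar{x}$ to (approximately) realise $\dist(x_t,S)$ gives $\|\bar{x}-x_t\|^2=\dist^2(x_t,S)$, while $\|\bar{x}-x_{t+1}\|\geq\dist(x_{t+1},S)$ because $\bar{x}\in S$. Using $\alpha<1/\rho$ (so $1-\alpha\rho>0$) and $\alpha<1/L_g$ (so $1-\alpha L_g\geq0$) to sign the coefficients, I arrive at
\begin{equation*}
\dist^2(x_t,S)\ \geq\ (1-\alpha L_g)\|x_t-x_{t+1}\|^2+(1-\alpha\rho)\dist^2(x_{t+1},S)+2\alpha\mu\,\dist(x_{t+1},S).
\end{equation*}
Discarding the nonnegative first summand and, when $\dist(x_{t+1},S)>0$, factoring out $\dist^2(x_{t+1},S)$ yields precisely $\zeta_{t+1}\dist^2(x_{t+1},S)\leq\dist^2(x_t,S)$ with $\zeta_{t+1}$ as in \eqref{eq:xi_exact}; the case $\dist(x_{t+1},S)=0$ is the first alternative. (The retained term $(1-\alpha L_g)\|x_t-x_{t+1}\|^2$ is the one later used in \Cref{cor:eps:strong_conv}.)

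For the ``moreover'' part I would argue by induction using the strictly increasing function $\phi(s):=(1-\alpha\rho)s^2+2\alpha\mu s$ on $[0,\infty)$, so that the displayed inequality reads $\phi(\dist(x_{t+1},S))\leq\dist^2(x_t,S)$. The decisive computation is that $\phi(2\mu/\rho)=(2\mu/\rho)^2$: the cross term $2\alpha\mu(2\mu/\rho)$ exactly cancels the deficit $-\alpha\rho(2\mu/\rho)^2$. Hence if $\dist(x_t,S)<2\mu/\rho$, then $\phi(\dist(x_{t+1},S))\leq\dist^2(x_t,S)<(2\mu/\rho)^2=\phi(2\mu/\rho)$, and strict monotonicity of $\phi$ forces $\dist(x_{t+1},S)<2\mu/\rho$; this propagates \eqref{eq:Inequality2murho} from $t_0$ to all $t\geq t_0$. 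Finally $\zeta_{t+1}>1$ is equivalent to $\tfrac{2\alpha\mu}{\dist(x_{t+1},S)}>\alpha\rho$, i.e.\ to $\dist(x_{t+1},S)<2\mu/\rho$, which now holds for every $t\geq t_0$.

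I expect the main subtlety to be the choice of the test point as the (approximate) projection of $x_t$ rather than of $x_{t+1}$: only this choice produces the coefficient $(1-\alpha\rho)$ in front of $\dist^2(x_{t+1},S)$ while keeping $\|\bar{x}-x_t\|^2=\dist^2(x_t,S)$ exactly. The second key point is the identity $\phi(2\mu/\rho)=(2\mu/\rho)^2$, which is what makes the threshold $2\mu/\rho$ forward-invariant and, equivalently, pins down the sign of $\zeta_{t+1}-1$. Handling a possibly non-attained projection via an infimizing sequence in $S$, and checking the step-size inequalities $\alpha<1/\rho$ and $\alpha<1/L_g$ are used with the correct signs, are routine technical points.
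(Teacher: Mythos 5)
Your proposal is correct and follows essentially the same route as the paper: specialise \eqref{eq:eps:FB_estimate} at a (near-)projection $\overline{x}\in S$, bound the left-hand side by sharpness, expand the inner product, and read off the quadratic inequality $(1-\alpha\rho)\dist^2(x_{t+1},S)+2\alpha\mu\dist(x_{t+1},S)\leq\dist^2(x_t,S)$, exactly as in the proofs of \Cref{lem:inthetube:const} and \Cref{prop:estimation with Et} to which the paper defers. Your forward-invariance argument via the strictly increasing map $\phi$ and the identity $\phi(2\mu/\rho)=(2\mu/\rho)^2$ is a cleaner packaging of the paper's solve-the-quadratic-via-its-discriminant step, and your infimizing-sequence treatment of a possibly non-attained projection is, if anything, more careful than the paper's.
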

\begin{proof}
 The proof follows the same steps as Lemma 6.1 starting from \eqref{eq:eps:FB_estimate} of Proposition 5.1. Notice that there is no $\varepsilon$ term in this case.
\end{proof}
In the exact case, with 
$\alpha_t \in [\underline{\alpha}, \overline{\alpha}]\subset \left(0, \min\left\{\frac{1}{\rho}, \frac{1}{L_g}\right\}\right)$ and obtain a similar result as 
\Cref{theo:Et constant convergence}.

\begin{proposition}[\textbf{Strong convergence of the sequence in the exact case}]\label{prop:convergence_convex:new}
  Let $\X$ be a Hilbert space and $\fonc{f,g}{\X}{(-\infty,+\infty]}$ satisfy \Cref{assu:problem} and the function $f+g$ satisfy the sharpness condition \eqref{eq:sharpness:def} with constant $\mu>0$. Let $(x_t)_{t\in\N}$ be  generated by \Cref{alg:epsFB} with $\varepsilon=0$ and  step sizes ${\alpha_t \in [\underline{\alpha},\overline{\alpha}]\subset (0, \min\{\frac{1}{\rho}, \frac{1}{L_g}\})}$. Assume  there exists a $t_0\in \N$ such that $\operatorname{dist}(x_{t_0},S)<\frac{2\mu}{\rho}$.
 Then 
\begin{equation}\label{eq:dist_to_zero}
  \lim_{t\to \infty} \operatorname{dist}(x_{t+1},S)=0
\end{equation}
 and the sequence $(x_t)_{t\in\N}$ converges strongly to a point $x^*\in S$.
\end{proposition}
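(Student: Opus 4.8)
The plan is to specialise the analysis of \Cref{theo:Et constant convergence} to the exact regime, where by \Cref{rem:exact_case} we have $E^-=0$ and $E^+=2\mu/\rho$, and then to upgrade the resulting convergence of the distances into strong convergence of the iterates by the summability argument already used in \Cref{cor:eps:strong_conv}. Throughout, I would rely on \Cref{cor:1st result of inexact case} as the exact-case replacement of \Cref{prop:estimation with Et} and \Cref{lem:inthetube:const}, applied at each step with the step size $\alpha_t$ used to produce $x_{t+1}$ from $x_t$.

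First I would fix $d_0:=\operatorname{dist}(x_{t_0},S)<2\mu/\rho$ and show that the tail of the distance sequence contracts at a \emph{uniform} geometric rate. By \Cref{cor:1st result of inexact case} the propagation \eqref{eq:Inequality2murho} holds, so $\operatorname{dist}(x_t,S)<2\mu/\rho$ and $\zeta_{t+1}>1$ for every $t\ge t_0$; the one-step estimate $\zeta_{t+1}\operatorname{dist}^2(x_{t+1},S)\le\operatorname{dist}^2(x_t,S)$ then forces $(\operatorname{dist}(x_t,S))_{t\ge t_0}$ to be non-increasing, whence $\operatorname{dist}(x_{t+1},S)\le d_0$ for all $t\ge t_0$. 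Inserting this upper bound into the definition \eqref{eq:xi_exact} of $\zeta_{t+1}$, and using $\alpha_t\in[\underline\alpha,\overline\alpha]$ together with $2\mu/d_0-\rho>0$ (which holds because $d_0<2\mu/\rho$), I obtain the uniform bound $\zeta_{t+1}\ge\zeta:=1+\underline\alpha(2\mu/d_0-\rho)>1$. The degenerate case $\operatorname{dist}(x_{t+1},S)=0$ is harmless: the "either/or" in \Cref{cor:1st result of inexact case} shows the distance then stays zero for all later indices, so the contraction below holds trivially.

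The first assertion \eqref{eq:dist_to_zero} now follows by iterating $\operatorname{dist}^2(x_{t+1},S)\le\zeta^{-1}\operatorname{dist}^2(x_t,S)$, which gives $\operatorname{dist}^2(x_t,S)\le\zeta^{-(t-t_0)}\operatorname{dist}^2(x_{t_0},S)\to0$. For the strong convergence I would reproduce, with $\varepsilon=0$ and $E^-=0$, the refined inequality behind \Cref{cor:eps:strong_conv}, namely $\zeta_{t+1}\operatorname{dist}^2(x_{t+1},S)+(1-\alpha_t L_g)\|x_t-x_{t+1}\|^2\le\operatorname{dist}^2(x_t,S)$; discarding the nonnegative first term yields $(1-\alpha_t L_g)\|x_t-x_{t+1}\|^2\le\operatorname{dist}^2(x_t,S)$. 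Since $\alpha_t\le\overline\alpha<1/L_g$, combining with the geometric decay from the previous paragraph produces $\|x_t-x_{t+1}\|\le C\,\zeta^{-(t-t_0)/2}$ for a constant $C$ depending on $d_0$ and $1-\overline\alpha L_g$, so the geometric series of ratio $\zeta^{-1/2}<1$ is summable and $\sum_{t\ge t_0}\|x_t-x_{t+1}\|<+\infty$. By \cite[Theorem 1]{Bolte2014_Proximal} the sequence is Cauchy, hence converges strongly to some $x^*\in\X$; finally, since $f+g$ is proper and lower semicontinuous its minimiser set $S$ is closed, so continuity of $\operatorname{dist}(\cdot,S)$ and \eqref{eq:dist_to_zero} give $\operatorname{dist}(x^*,S)=0$, that is $x^*\in S$.

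The main obstacle is the step in the second paragraph: extracting a \emph{single} contraction factor $\zeta>1$ even though $\zeta_{t+1}$ in \eqref{eq:xi_exact} depends both on the not-yet-known iterate $x_{t+1}$ and on the varying step size $\alpha_t$. This is precisely what the monotonicity of the distances (yielding $\operatorname{dist}(x_{t+1},S)\le d_0$) and the monotonicity of $\alpha\mapsto 1+\alpha(2\mu/d_0-\rho)$ in the admissible range resolve; once a uniform $\zeta$ is secured, the remaining work is the routine geometric-series bookkeeping already carried out for the inexact case in \Cref{cor:eps:strong_conv}.
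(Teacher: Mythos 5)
Your proposal is correct and follows essentially the same route as the paper's proof: the same uniform contraction factor $\xi=1+\underline{\alpha}\bigl(2\mu/\operatorname{dist}(x_{t_0},S)-\rho\bigr)>1$ obtained from \Cref{cor:1st result of inexact case} together with monotonicity of the distances, and the same summability argument via $(1-\alpha_t L_g)\Vert x_t-x_{t+1}\Vert^2\le\operatorname{dist}^2(x_t,S)$ leading to a Cauchy sequence and $x^*\in S$ by closedness of $S$. Your explicit justification of $\operatorname{dist}(x_{t+1},S)\le\operatorname{dist}(x_{t_0},S)$ before bounding $\zeta_{t+1}$ from below is a point the paper leaves implicit, and is a welcome clarification rather than a deviation.
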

\begin{proof}
By \Cref{cor:1st result of inexact case},  for every $t\geq t_0$, there exists $\xi_{t+1} =  1 -\alpha_t \rho +\frac{2\alpha_t \mu}{\dist (x_{t+1},S)}> 1$ such that 
\begin{equation}\label{eq:ineq:xi:exact}
    \xi_{t+1} \dist^2(x_{t+1},S) \leq \dist^2(x_{t},S) . 
\end{equation}
Given the lower-bound for $\alpha_t$, we now define
\begin{equation}
\label{eq:zeta_uniform}\xi:=1+\underline{\alpha}\left(\frac{2\mu}{\dist(x_{t_0},S)}-\rho\right)>1,
\end{equation}
which yields the following inequality for every $t\geq t_0$
\begin{align}
\xi_{t+1} & =1+\alpha_{t}\left(\frac{2\mu}{\operatorname{dist}\left(x_{t+1},S\right)}-\rho\right)
  >1+\alpha_{t}\left(\frac{2\mu}{\dist(x_{t_0},S)}-\rho\right) \geq1+\underline{\alpha}\left(\frac{2\mu}{\dist(x_{t_0},S)}-\rho\right)=\xi.
\end{align}
Combining the latter with \eqref{eq:ineq:xi:exact}, we have that for every $t\geq t_0$
\begin{equation}
\xi\operatorname{dist}^2\left(x_{t+1},S\right) < \xi_{t+1}\operatorname{dist}^2\left(x_{t+1},S\right) \leq \operatorname{dist}^2\left(x_{t},S\right)
\end{equation}
hence the following \emph{contraction} inequality holds
\begin{equation}
\label{eq: linear convergence}
\operatorname{dist}^2\left(x_{t+1},S\right) < \frac{1}{\xi}\operatorname{dist}^2\left(x_{t},S\right),
\end{equation}
which by recursiveness yields that for every $t\geq t_0$
\begin{equation}
\label{eq: linear convergence:2}
\operatorname{dist}^2\left(x_{t+1},S\right) < \left(\frac{1}{\xi}\right)^{t+1-t_0}\operatorname{dist}^2\left(x_{t_0},S\right).
\end{equation}
For $t\to \infty$ we obtain \eqref{eq:dist_to_zero} and this concludes the proof of the first part of the statement.\\

To show the strong convergence of  $(x_t)_{t\in\N}$, notice that, in the case of exact proximal computation, \eqref{eq:eps:FB_estimate} from \Cref{prop:eps:FB estimate g-convex} can be further estimated 
\begin{align*}
\left(f+g\right)\left(\overline{x}\right)-\left(f+g\right)\left(x_{t+1}\right) 
   \geq  \left(\frac{1}{2\alpha } - \frac{L_g}{2}\right)\| x_t - x_{t+1}\|^2 +\left(\frac{1}{2\alpha } -\frac{\rho}{2}\right)\left\Vert \overline{x} -x_{t+1}\right\Vert ^{2} -\frac{1}{2\alpha }\|x_t-\overline{x}\|^2,    
   \end{align*}
   with $\overline{x}\in S$. Following the proof of \Cref{lem:inthetube:const}, we estimate the LHS by \eqref{eq:sharpness:local:def}

\begin{align*}
   -\mu \operatorname{dist}(x_{t+1},S) + \frac{1}{2\alpha_{t}}\operatorname{dist}^2(x_t,S) \geq\left(\frac{1}{2\alpha_{t}}-\frac{\rho}{2}\right)\operatorname{dist}^2(x_{t+1},S)
    +\left(\frac{1}{2\alpha_{t}}-\frac{L_{g}}{2}\right)\left\Vert x_{t}-x_{t+1}\right\Vert ^{2}.
\end{align*}
Multiplying  both sides by $2\alpha_t$ we get
\begin{equation}\label{eq:the_inequality_above}
\begin{aligned}
   -2\alpha_t\mu \operatorname{dist}(x_{t+1},S) + \operatorname{dist}^2(x_t,S) \geq\left({1}-{\rho}\alpha_t\right)\operatorname{dist}^2(x_{t+1},S)
    +\left(1-{L_{g}}\alpha_t\right)\left\Vert x_{t}-x_{t+1}\right\Vert ^{2}.
\end{aligned}\end{equation}
Let now $t\geq t_0$. From \eqref{eq:the_inequality_above} we infer
\begin{align}
\label{eq:ineq:exactcase}
&\left(1-\alpha_{t}L_{g}\right)\left\Vert x_{t}-x_{t+1}\right\Vert ^{2}  \leq \operatorname{dist}^2(x_{t},S)-\operatorname{dist}^2(x_{t+1},S)+\underbrace{\alpha_{t}\rho \operatorname{dist}^2(x_{t+1},S)-2\mu\alpha_{t}\operatorname{dist}(x_{t+1},S)}_{<0}
\end{align}
where for every $t\geq t_0$, by virtue of \Cref{cor:1st result of inexact case} we have
\begin{equation}
    \alpha_t\operatorname{dist}(x_{t+1},S)\left(\rho\operatorname{dist}(x_{t+1},S) - 2\mu\right) <0.
\end{equation}
Since all the terms depending on $x_{t+1}$ in the RHS of \eqref{eq:ineq:exactcase} are negative, 
\begin{equation}
(t\geq t_0) \qquad \left(1-\alpha_{t}L_{g}\right)\left\Vert x_{t}-x_{t+1}\right\Vert ^{2} \leq  \operatorname{dist}^2(x_{t},S)
\end{equation}
Given the upper-bound for $\alpha_t $, 
\sloppy we obtain the following lower bound for $\left(1-\alpha_{t}L_{g}\right)\left\Vert x_{t}-x_{t+1}\right\Vert ^{2}$:
\begin{equation}
(\forall t \in\N) \qquad \left(1-\overline\alpha L_{g}\right)\left\Vert x_{t}-x_{t+1}\right\Vert ^{2} \leq  \left(1-\alpha_{t}L_{g}\right)\left\Vert x_{t}-x_{t+1}\right\Vert ^{2}.
\end{equation}
By taking $\xi>1$ from \eqref{eq:zeta_uniform},  for every $t> t_0$ we have
\begin{equation}
\left\Vert x_{t}-x_{t+1}\right\Vert \leq\frac{\operatorname{dist}(x_{t},S)}{\sqrt{1-\alpha_{t}L_{g}}}\leq \frac{\operatorname{dist}(x_{t},S) }{\sqrt{1-\overline{\alpha}L_g}}< \frac{1 }{\sqrt{1-\overline{\alpha}L_g}}\left(\frac{1}{\sqrt{\xi}}\right)^{t-t_0}\operatorname{dist}(x_{t_0},S),
\end{equation}
where the last inequality is a consequence of \eqref{eq: linear convergence:2}. In conclusion, 
\begin{equation}
    \sum_{t=0}^{\infty} \|x_{t}- x_{t+1}\| < +\infty
\end{equation}
 which means that $(x_{t})_{t\in\N}$ is a Cauchy sequence, hence it converges to some $x^*\in\X$ (see {\cite[Theorem 1]{Bolte2014_Proximal}}). Eventually,  $x^*\in S$ because $S$ is closed 
 and $\operatorname{dist}(x_t,S)\rightarrow 0$ as $t\rightarrow +\infty$.
\end{proof}
}
{
\begin{remark}[Strong Convergence in the exact convex case]
Under the sharpness condition  \eqref{eq:sharpness:def}, for convex function ($\rho=0$) with exact proximal calculation ($\varepsilon=0$), starting from \eqref{eq:distance with eta and eps} one can arrive at 
\begin{equation*}
    2\alpha\mu \dist(x_{t+1},S) \leq \dist^2(x_t,S) - \dist^2(x_{t+1},S).
\end{equation*}
Taking the sum for $t=0,\dots,T$ yields
\begin{equation*}
    2\alpha\mu \sum_{t=0}^{T}\dist(x_{t+1},S) \leq \sum_{t=0}^{T}\left(\dist^2(x_t,S) - \dist^2(x_{t+1},S)\right) = \dist^2(x_0,S).
\end{equation*}
Hence, for $T\to \infty$, 
\begin{equation*}
    2\alpha\mu \sum_{t=0}^{\infty}\dist(x_{t+1},S) \leq \dist^2(x_0,S) < +\infty,
\end{equation*}
which implies $\dist(x_t,S) \to 0$ as $t\to\infty$. In conclusion, we obtain   \cite[Theorem 3.4(d)]{combettes2005signal}. 
\end{remark}
}

 \section{Applications}\label{sec:feasibility}
{

In this section, we cast binary-constrained problems in $\H = \R^n$ as instances of ~\eqref{eq: ourproblem}. In particular, we will be considering the structured problem 
\begin{equation}
    \label{FP0}
    \minimize{x\in \H}\,f(x)+\frac{\dist^2 (\mathbf{A}x,D)}{2},
\end{equation}
where $f$ is a sharp and weakly convex function, $D\subseteq \R^m$ is a closed convex set and $\fonc{\mathbf{A}}{\R^n}{\R^m}$ is a linear operator.
Problem~\eqref{FP0} satisfies the assumptions of \Cref{theo:Et constant convergence} and \Cref{prop:convergence_convex:new}. In particular, by \Cref{theo:sharpness 1} -- which we will soon present --, the objective of~\eqref{FP0} is sharp.\\



To model the binary constraints, let us consider the function $\fonc{f}{\mathbb{R}^n}{\mathbb{R}}$ defined as
\begin{equation}\label{eq:binaryfunction}
 \left(\forall x\in \H\right)\quad   f({x}) = {\sum_{i=1}^n |x_i^2-1|},   
\end{equation}
which corresponds to a separable sum of the function from \Cref{ex:critical_points} with $a=-1$ and $b=1$. Function $f$ is a (globally) sharp and weakly convex function with the set of minimisers \[{S = \{x=(x_1,\dots,x_n)\in\R^n\mid x_i \in\{-1,1\} \;\text{for}\; i\in \{1,\dots,n\}\} = {\{-1,1\}}^n}.\]

\Cref{fig:comparisons} illustrates a comparison between function $f$ and the distance from $ {S}$ when $\H = \R$. 
Function $f$ can be seen as a non-smooth counterpart to the function 
 ${ x\mapsto  {\sum_{i=1}^n x_i^2{(x_i-1)}^2} }$ used in ~\cite{Xu2021DoublyGraduated} and it is similar to the function ${ x\mapsto  {\sum_{i=1}^n x_i{(1-x_i)}} } + \iota_{[0,1]^n}$ used in ~\cite{bonettini2023abstract} with $x\in\R^n$ to promote binary integer solutions with values in ${\{0,1\}}^n$. To the best of our knowledge, this is the first work where function $f$ is used in such a context.\\

\begin{remark}
\label{rmk: prox form of x2 -1}
Since function $f$ is defined as a separable sum of functions of the form $\overline f = |(\cdot)^2-1|$, for $\alpha>0$, the proximity operator of $f$ is defined as
\begin{equation}
(\forall y\in \H) \qquad   \prox_{\alpha f}(y) = \bigtimes_{i=1}^n \prox_{\alpha \overline f}(y_i), 
\end{equation}
    We give an explicit form of proximal operator for the $2$-weakly convex function $\overline f = |( \cdot)^2 -1|$. Consider $0<\alpha <1/2$ and $y \in \R$. Our aim is to solve the minimisation problem
    \begin{equation}
        \argmin_{x\in \R} f(x) +\frac{1}{2\alpha} ( x-y)^2.
    \end{equation}
    Let $x\in \R$ be its unique minimiser. 
 If $|x| >1$, then $x= \frac{y}{2\alpha +1}$; if $| x| <1$, then $x = \frac{y}{1-2\alpha}$; if $ x =1$, we have that the first-order optimality condition reads as $ 0 \in [-2\alpha,2\alpha] + (1-y)  $, implying $y \in [1-2\alpha,1+2\alpha]$; if $ x = -1$, we have that the first-order optimality condition reads as $ 0 \in [-2\alpha,2\alpha] +(-1-y)  $, implying $y \in [-1-2\alpha,-1+2\alpha]$.
        
    %
      In conclusion, 
\begin{equation}\label{eq:prox_of_f}
\quad (\forall y \in \R  )\quad 
    \prox_{\alpha f} (y) = \begin{cases}
        \frac{y}{1+2\alpha} & \text{ if } | y | >1+2\alpha\\
        \frac{y}{1-2\alpha} & \text{ if } | y | <1-2\alpha\\
        \frac{y}{|y|} & \text{ otherwise. } 
    \end{cases}  
\end{equation}
\end{remark}

\begin{figure}
\centering
\resizebox{0.25\textheight}{!}{
\begin{tikzpicture}
\begin{axis}[
    axis lines = center,
    legend style={at={(0.5,1.2)}, anchor=north,legend columns=-1}
]

\addplot [
    domain=-2:2, 
    samples=300, 
    color=black,
    ]
    {abs(x^2 -1)};
\addlegendentry{$|x^2 -1|$}
\addplot [dashed,
    domain=-2:2, 
    samples=300, 
    color=black,
]
{abs(abs(\x)-1)};
\addlegendentry{$||x| -1|$}
\end{axis}
\end{tikzpicture}}
\caption{Comparison between function $f$ (continuous line) and the distance from $S$ (dashed line) for $\H = \R$.}
    \label{fig:comparisons}
\end{figure}
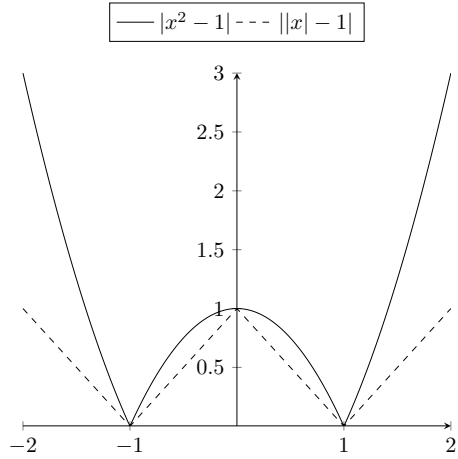

The  properties of the function  ${\dist^2(\cdot,D)}/{2} $, 
 are summarised in the following lemma.

\begin{lemma}[{\cite[Corollary 12.30, Example 13.5]{Bauschke2017}}]\label{lem:dist}
    Let $D\subseteq \H$ be a convex set. Then 
    \begin{equation}\label{eq:dist_func:diff_conv}
 \quad \left(\forall x \in \H\right)\quad  
    \frac{\dist^2 (x,D)}{2} = \frac{\|x\|^2}{2} - \left( \frac{\|\cdot\|^2}{2} + \iota_D\right)^*(x).  
\end{equation}
If, in addition, $D$ is closed, then  $\dist^2(\cdot,D)$ is Fréchet differentiable on $\H$ and 
    \begin{equation} \left(\forall x \in \H\right) \quad \nabla \frac{\dist^2 (x,D)}{2} = x - \proj_D(x).  
    \end{equation}
\end{lemma}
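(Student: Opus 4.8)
The plan is to treat the two assertions separately, since the first is a self-contained conjugacy computation while the second rests on the theory of the Moreau envelope.

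For the identity \eqref{eq:dist_func:diff_conv}, I would start from $\dist^2(x,D) = \inf_{y\in D}\|x-y\|^2$ and expand the square as $\tfrac{1}{2}\|x-y\|^2 = \tfrac{1}{2}\|x\|^2 - \langle x\mid y\rangle + \tfrac{1}{2}\|y\|^2$. Pulling the $x$-dependent term $\tfrac{1}{2}\|x\|^2$ out of the infimum over $y\in D$ gives
\[
\frac{\dist^2(x,D)}{2} = \frac{\|x\|^2}{2} - \sup_{y\in D}\Big(\langle x\mid y\rangle - \tfrac{1}{2}\|y\|^2\Big).
\]
Rewriting the supremum over $D$ as a supremum over all of $\H$ against $\tfrac{\|\cdot\|^2}{2} + \iota_D$ recognizes it precisely as the Fenchel conjugate $\big(\tfrac{\|\cdot\|^2}{2} + \iota_D\big)^*(x)$, which yields \eqref{eq:dist_func:diff_conv}. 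This step is routine and does not even use convexity of $D$.

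For the differentiability claim, the key observation is that $\tfrac{1}{2}\dist^2(\cdot,D)$ is exactly the Moreau envelope of $\iota_D$ with parameter $1$, namely $\tfrac{1}{2}\dist^2(x,D) = \inf_{y}\big(\iota_D(y) + \tfrac{1}{2}\|x-y\|^2\big)$. When $D$ is closed and convex and nonempty, $\iota_D$ is proper, lower semicontinuous and convex, so I would invoke the standard result (as in the cited corollary) that the Moreau envelope of such a function is Fréchet differentiable with gradient $x \mapsto x - \prox_{\iota_D}(x)$; since $\prox_{\iota_D} = \proj_D$, this gives $\nabla \tfrac{1}{2}\dist^2(x,D) = x - \proj_D(x)$. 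Alternatively, I would argue directly: setting $G := \tfrac{1}{2}\dist^2(\cdot,D)$, $p = \proj_D(x)$ and $p' = \proj_D(x')$, and using $p,p'\in D$ as competitors in the infima defining $G(x')$ and $G(x)$, one obtains the two-sided estimate
\[
\langle x'-p'\mid x'-x\rangle - \tfrac{1}{2}\|x'-x\|^2 \le G(x') - G(x) \le \langle x-p\mid x'-x\rangle + \tfrac{1}{2}\|x'-x\|^2;
\]
subtracting $\langle x-p\mid x'-x\rangle$ and using firm nonexpansiveness of $\proj_D$ to bound $\langle p'-p\mid x'-x\rangle$ by $\|x'-x\|^2$ then shows the remainder is $O(\|x'-x\|^2)$, giving Fréchet differentiability with gradient $x-p = x - \proj_D(x)$.

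The main obstacle is the passage from the variational identity to genuine Fréchet differentiability. The infimum defining $G$ is attained at the unique projection only because $D$ is closed and convex, and it is precisely the firm nonexpansiveness $\|\proj_D x' - \proj_D x\|^2 \le \langle \proj_D x' - \proj_D x \mid x'-x\rangle$ that upgrades the easy one-sided subgradient inequality into the symmetric quadratic remainder bound above. Without closedness the projection may fail to exist and the gradient formula breaks down, so that hypothesis must be used exactly here; the remainder of the argument is bookkeeping.
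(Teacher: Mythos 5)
Your proposal is correct: the conjugacy identity follows from expanding the square exactly as you do, and the differentiability claim is precisely the statement that $\tfrac12\dist^2(\cdot,D)$ is the Moreau envelope of $\iota_D$, whose gradient is $\operatorname{Id}-\operatorname{proj}_D$; your direct two-sided estimate with firm nonexpansiveness is also sound. The paper gives no proof of this lemma, relying entirely on the citation to Bauschke--Combettes, and your argument is exactly the standard one behind that citation, so there is nothing further to compare.
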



The sharpness of $f+g$, which is the core of our convergence result, 
holds by virtue of the following theorem, where $h_1 = f$ and $h_2 = g$.
\begin{theorem}\label{theo:sharpness 1}
    Let functions $\fonc{h_1,h_2}{\H}{\R}$ be proper and let the following assumptions be satisfied:
    \begin{enumerate}[parsep = 0pt, itemsep=5pt, topsep=5pt, label=\textup{(\roman*)}]
        \item the global minimiser set $S$ of $h_1+h_2$ is equal to (or contained in) $S_1\cap S_2$, if both non-empty or $S\subset S_1$ where $S_1,S_2$ are the sets of the minimisers of $h_1$ and $h_2$ respectively;
        \item the optimal value of $h_1+h_2$ is zero;
        \item $h_1(x)\geq 0$, $h_2(x) \geq 0$ for all $x$ in a neighborhood of $S$;
        \item $h_1$ is sharp with respect to $S$ locally or globally with constant $\mu>0$ and ${\inf_{x\in\H} h_1(x) =0}$.
    \end{enumerate}
    Then $h_1+h_2$ is locally or globally sharp with respect to $S$.
\end{theorem}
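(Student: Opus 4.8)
The plan is to reduce the sharpness of $h_1+h_2$ to the assumed sharpness of $h_1$, treating $h_2$ merely as a harmless non-negative perturbation near $S$. Recall from \Cref{def:sharpness} that, because assumption \textup{(ii)} fixes $\inf_{x\in\H}(h_1+h_2)(x)=0$, establishing local sharpness of $h_1+h_2$ amounts to producing a radius $\delta>0$ and a constant $\mu>0$ with
\begin{equation*}
\left(\forall x\in B(S,\delta)\right)\qquad (h_1+h_2)(x)\geq \mu\,\dist(x,S).
\end{equation*}
The key observation is that the sharpness of $h_1$ already supplies the entire linear lower bound, so no new estimate on $h_2$ beyond its sign is needed.

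First I would invoke assumption \textup{(iv)}: since $h_1$ is sharp with respect to $S$ with constant $\mu>0$ and $\inf_{x\in\H}h_1(x)=0$, there is a radius $\delta_1>0$ such that $h_1(x)\geq \mu\,\dist(x,S)$ for every $x\in B(S,\delta_1)$. Next I would use assumption \textup{(iii)} to obtain a radius $\delta_2>0$ on which $h_2(x)\geq 0$. Setting $\delta:=\min\{\delta_1,\delta_2\}$ and adding the two bounds yields $(h_1+h_2)(x)\geq \mu\,\dist(x,S)$ for all $x\in B(S,\delta)$, which is exactly local sharpness with the \emph{same} constant $\mu$ (the inequality $h_1+h_2\ge h_1$ shows no shrinkage of the modulus is incurred).

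It then remains to verify that this inequality is genuinely a sharpness statement for $h_1+h_2$ relative to its \emph{own} solution set, and this is where assumptions \textup{(i)} and \textup{(ii)} enter. Item \textup{(ii)} identifies $\inf_{x\in\H}(h_1+h_2)(x)=0$, so the left-hand side above equals $(h_1+h_2)(x)-\inf(h_1+h_2)$; item \textup{(i)} guarantees that the minimiser set $S$ of $h_1+h_2$ is contained in $S_1\cap S_2$ (resp. $S_1$), so that the quantity $\dist(\cdot,S)$ carried over from the sharpness of $h_1$ is the correct distance for $h_1+h_2$. For the global conclusion I would repeat the argument over all of $\H$, which is available whenever $h_1$ is globally sharp and $h_2\geq 0$ everywhere; the latter holds in the application of~\eqref{FP0}, where $h_2=\dist^2(\mathbf{A}\cdot,D)/2\geq 0$ on all of $\H$.

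The argument involves no hard analytic step; the only point requiring care — the would-be obstacle — is the bookkeeping among the three radii together with checking that the hypotheses are mutually consistent, namely that the solution set $S$ and optimal value $0$ of $h_1+h_2$ genuinely coincide with the data inherited from $h_1$ through \textup{(i)}, \textup{(ii)} and \textup{(iv)}. Once that consistency is confirmed, the sharpness constant transfers verbatim and the proof is complete.
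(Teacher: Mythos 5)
Your proposal is correct and follows essentially the same route as the paper's own proof: invoke the sharpness of $h_1$ from (iv) on a ball of radius $\delta_1$, the non-negativity of $h_2$ from (iii) on a ball of radius $\delta_2$, take $\delta=\min\{\delta_1,\delta_2\}$, add the two bounds, and use (i) and (ii) to identify the resulting inequality as sharpness of $h_1+h_2$ with respect to its own solution set and optimal value. Your added remark that the global conclusion needs $h_2\geq 0$ on all of $\H$ (beyond the literal statement of (iii)) is a fair point of care, but it does not change the argument.
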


\begin{proof}
By {(iv)} there exists $\delta_1> 0$ such that 
for every $x\in B(S,\delta_1)$ we have $h_1(x) \geq \mu \dist(x,S)$.
By (iii), there exists $\delta_2> 0$ such that for every $ x\in B(S,\delta_2)$ function $h_2$ is non-negative, hence by taking $\delta=\min\{\delta_1,\delta_2\}$ we have 
\begin{equation}
\quad   \left(\forall  x\in B(S,\delta)\right)\quad  h_1(x) + h_2(x) \geq \mu \dist  (x,S).  
\end{equation}
By (i) and (ii), the above inequality corresponds to the local sharpness of function $h_1+h_2$.
\end{proof}

In the next section, we will use the considered framework to model and solve a discrete tomography problem.
} \subsection{Discrete Tomography}
\label{sec:discrete_tomography}
Binary Tomography (BT) is a special case of Discrete Tomography (DT) which aims at reconstructing binary images starting from a limited number of their projections \cite{SCHULE2005_DiscreteTomography, Kadu2029ConvexBinaryTomography}. 
The image $\overline x \in \R^n$ is represented as a grid of $n=n_1\times n_2$ pixels taking values $x_j \in \{-1,1\}$ for $j=1,\dots,n$. The projections $y_i$ for $i=1,\dots,m$ are linear combinations of the pixels along $m$ directions. The linear transformation from image to projections is modelled as 
\begin{equation}
    y=\mathbf{A}\overline x + \omega
\end{equation}
where $x_j$ denotes the value of the image in the $j$-th cell, $y_i$ is the weighted sum of the image along the $i$-th ray, the element $\mathbf{A}_{i,j}$ of matrix $\mathbf{A}\in\R^{m\times n}$ is proportional to the length of the $i$-th ray in the $j$-th cell and finally $\omega\in\R^n$ is an additive noise with Gaussian distribution and standard deviation $\sigma>0$. In general, the projection matrix $\mathbf A$ has a low rank, meaning that $\operatorname{rank}(\mathbf A) < \min\{ n,m\}$ \cite{Kadu2029ConvexBinaryTomography}. We cast the problem as
\begin{equation}\label{eq:binary1}
    \text{Find}\quad x\in\{-1,1\}^n\quad\text{such\,that}\quad \|y-\mathbf Ax\|_2 \leq \theta
\end{equation}
for some $\theta>0$, which can be reformulated as
\begin{equation}\label{eq:discrete_tomography_objective}
    \minimize{{x\in {C}}}  F(x) \quad \text{where}\quad {C} = \{x\in\R^n\,|\,\|\mathbf Ax-y\|_2\leq \theta\}
\end{equation}
with $F$ defined as in \eqref{eq:binaryfunction} and the constraint  set ${C}$, represented by the function
  $\dist^2(\cdot,\overline{B}(y,\theta))/2$. In conclusion, we have
\begin{equation}\label{eq:discrete_tomography_objective_final}
    \minimize{{x\in\R^n}}  F(x)  + \frac{\dist^2(\mathbf Ax,\overline{B}(y,\theta))}{2}.
\end{equation}
The sharpness of the objective function is ensured by \Cref{theo:sharpness 1}, assuming that the set ${S}\cap C \neq \emptyset$ and equivalently $S \cap \argmin \frac{\dist^2(\mathbf Ax,\overline{B}(y,\theta))}{2}  \neq \emptyset$ (for an adequate value of $\theta$, the original signal $\overline{x}$ belongs to the intersection). Clearly, for every $x\in \R^n$, $\iota_{\overline{B}(y,\theta)}(x) = \iota_{\overline{B}(0,\theta)}(x-y)$ and 
  \begin{equation}
      \proj_{\overline{B}(y,\theta)}(x) = y + \proj_{\overline{B}(0,\theta)}(x-y) = y + \left( (x-y) - \prox_{\theta \|\cdot\|_2}(x-y)\right) = x - \prox_{\theta \|\cdot\|_2}(x-y).
  \end{equation}
  It follows that, by \Cref{lem:dist}, for every $x\in\R^n$, $\nabla\left( \frac{\dist^2(\mathbf Ax,\overline{B}(y,\theta))}{2}\right) =  \mathbf A^\top\left(  \prox_{\theta \|\cdot\|_2}(\mathbf Ax - y)\right)$
   and $\|\mathbf A\|^2$ is a Lipschitz constant for this gradient.

\begin{remark}
    The model in \eqref{eq:binary1} and \eqref{eq:discrete_tomography_objective} (for appropriate choices of matrix $\mathbf A\in\R^{m\times n}$, vector $y\in\R^m$ and possible additional convex constraints) could be applied to other Binary Quadratic Programs (a special class of QCQP problems having the equality constraint $x_i^2=1$ for every $i\in\{1, \dots, n\}$ ) arising in Computer Vision, such as Graph Bisection, Graph Matching and Image Co-Segmentation (see \cite[Table 2]{Wang2016_BQ} and the references therein).
\end{remark}
\paragraph{Numerical tests} For our simulations we used the MATLAB codes and data from \cite{Kadu2029ConvexBinaryTomography, githubpage}. We reproduced the same synthetic setting: we considered four phantoms (\emph{Apple}, \emph{Lizard}, \emph{Bell}, \emph{Bird}), corresponding to binary images of size 64 x 64 pixels. Operator $\mathbf A$ models an X-ray tomographic scan with 64 detectors and a parallel beam acquisition geometry with four angles (0°, 50°, 100°, 150°). We set $\sigma=0.01$ for the additive noise and $\theta = 10(64\sigma)^2$ in the constraint set $\overline{B}(y,\theta)$. \Cref{fig:discrete_tomography} illustrates, from left to right, the original image and the reconstructions obtained with the Least Squares QR method (LSQR), with the Truncated Least Squares QR method (TLSQR), with the DUAL method proposed in \cite{Kadu2029ConvexBinaryTomography} and finally with \Cref{alg:epsFB} applied to \eqref{eq:discrete_tomography_objective_final}, which we refer to as Continuously Relaxed Binary Tomography (CRBT). All methods are initialised with a vector of zeros.

\begin{figure}[htb]
    \centering
    \begin{tabular}{p{0.16\textwidth}p{0.16\textwidth}p{0.16\textwidth}p{0.16\textwidth}p{0.16\textwidth}}
    \phantom{ccc} ORIG & \phantom{cci} LSQR & \phantom{cc} TLSQR & \phantom{cci} DUAL & \phantom{cci}{CRBT} \\
  \multicolumn{5}{c}{\centering
    \includegraphics[width=0.9\textwidth,trim={0 0 0 13},clip]{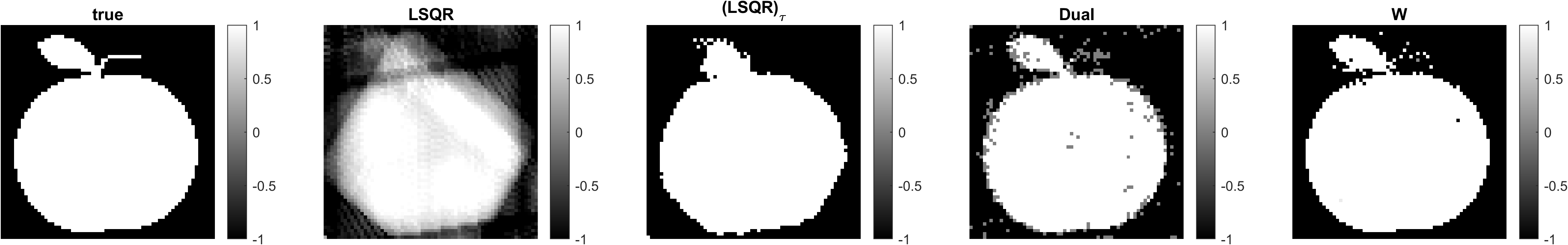}}\\
   \multicolumn{5}{c}{ \includegraphics[width=0.9\textwidth,trim={0 0 0 13},clip]{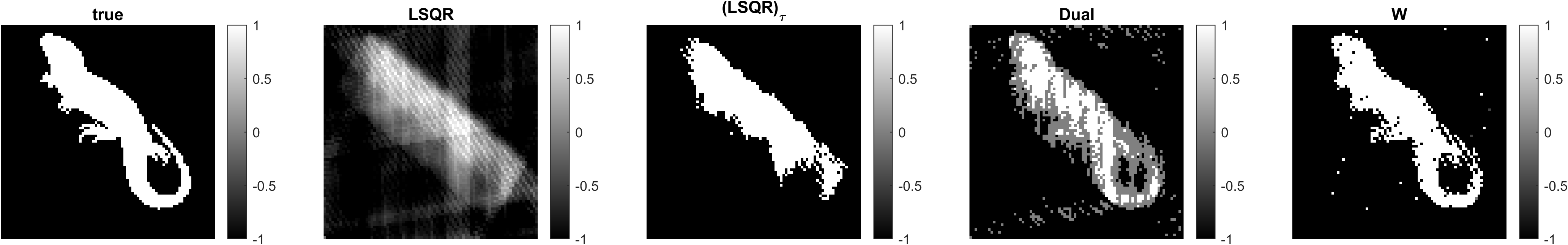}}\\
   \multicolumn{5}{c}{ \includegraphics[width=0.9\textwidth,trim={0 0 0 13},clip]{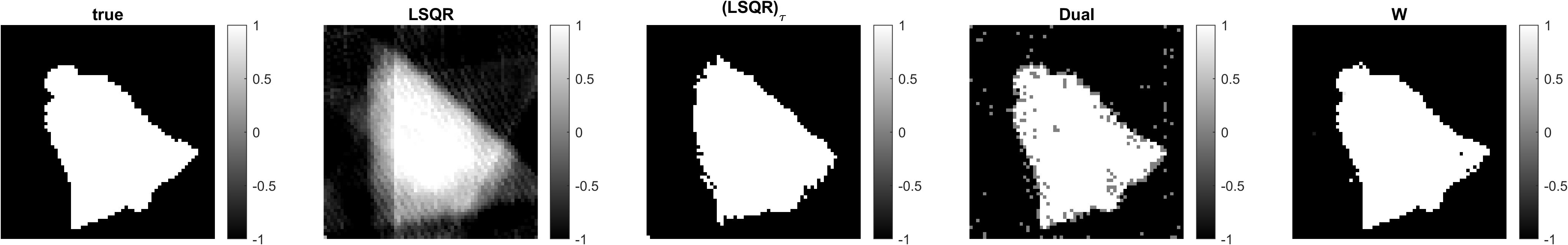}}\\
   \multicolumn{5}{c}{ \includegraphics[width=0.9\textwidth,trim={0 0 0 13},clip]{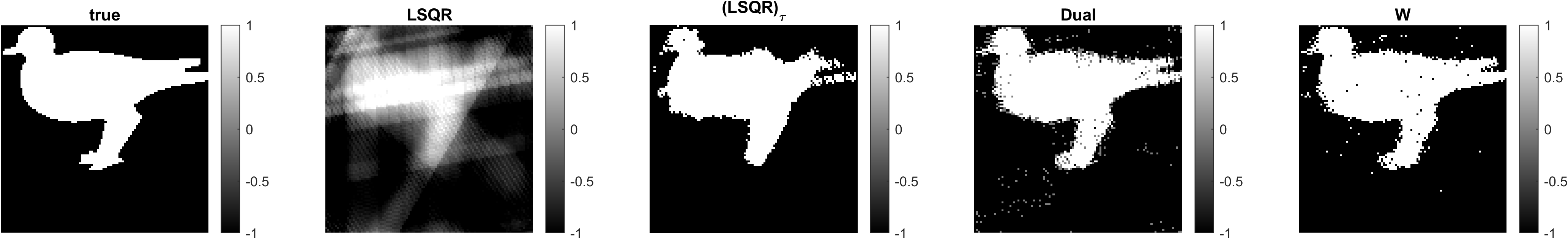}}\\
      \end{tabular}
       \caption{comparisons between different solutions. From left to right: original image, least squares solution, thresholded least squares solution, solution obtained with the dual method proposed in \cite{Kadu2029ConvexBinaryTomography}, solution obtained with our model.}
    \label{fig:discrete_tomography}
\end{figure}
\section{Conclusions}
{We investigate the convergence properties of the exact and the inexact forward-backward algorithms for the minimisation of a function that is expressed as the sum of a weakly convex function and a smooth function with Lipschitz-continuous gradient. In the inexact case, we inferred a convergence result that relies on the hypothesis that the accuracy level $\varepsilon> 0$ for the inexact proximal computation is kept constant throughout all the iterations. 
We successfully applied the analysed method to binary tomography.
It will be interesting, in future work, to extend our results so as to take into account non-constant accuracy levels $\varepsilon_t$.\\  } 
\paragraph{Funding}{This work was funded by the European Union's Horizon 2020 research and innovation program under the Marie Sk{\l}odowska-Curie grant agreement No 861137. This work represents only the authors' view, and the European Commission is not responsible for any use that may be made of the information it contains.}}

\bibliographystyle{acm}
\bibliography{references}

\begin{appendices}
\section{Proof of \Cref{cor:sh_versus_kl}}
\label{app:proof_of_equivalence}
{ In order to proceed with the proof of \Cref{cor:sh_versus_kl}, we need the two following results. The first result is presented in the following lemma, whose proof follows from {\cite[Theorem 2.1]{aze2004characterizations}} and {\cite[Remark 12.(ii)]{bolte2010characterizations}. }
\begin{lemma} 
\label{thm:sh_versus_kl}
    Let $\X$ be a Hilbert space and $h\in \Gamma_{\rho}(\X)$, $h\geq 0$, $S = \argmin_{x\in\X} h(x) = [h\leq 0]\neq\emptyset$. Let $r_0 >0$. Then 
    \begin{equation}
     \inf_{x \in [0< h<r_0]}\dist(\partial_{\rho/2} h(x),0) = \inf_{0\leq r < r_0} \inf_{x\in[r < h< r_0]} \frac{h(x) - r}{\dist(x, [h\leq r])}. 
    \end{equation}
\end{lemma}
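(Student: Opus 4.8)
The plan is to use the \emph{strong slope}
\[
|\nabla h|(x) := \limsup_{y\to x}\frac{(h(x)-h(y))_+}{\|x-y\|}
\]
as an intermediate quantity bridging the two sides of the claimed identity. The right-hand side is a nonlinear error-bound modulus, which the Azé--Corvellec theory expresses through the strong slope, whereas the left-hand side is a subdifferential slope, which for weakly convex functions again reduces to the strong slope. Establishing both coincidences and chaining them yields the result.

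First I would fix a level $r\in[0,r_0)$ and apply {\cite[Theorem 2.1]{aze2004characterizations}} to the proper lower-semicontinuous function $h$ on the (complete) Hilbert space $\X$, obtaining the levelwise error-bound identity
\[
\inf_{x\in[r<h<r_0]}\frac{h(x)-r}{\dist(x,[h\le r])}=\inf_{x\in[r<h<r_0]}|\nabla h|(x).
\]
Taking the infimum over $r\in[0,r_0)$ on both sides, the right-hand member collapses: since $|\nabla h|(x)$ does not depend on $r$ and the bands $[r<h<r_0]$ increase to $[0<h<r_0]$ as $r\downarrow 0$ (the case $r=0$ being included), I obtain
\[
\inf_{0\le r<r_0}\inf_{x\in[r<h<r_0]}\frac{h(x)-r}{\dist(x,[h\le r])}=\inf_{x\in[0<h<r_0]}|\nabla h|(x).
\]
The care needed here is that the per-level step removes the dependence on $r$ \emph{before} the outer infimum is taken, and that the truncation at $r_0$ (absent from the usual formulation over the full superlevel set $[h>r]$) does not alter either value; this I would verify by a direct monotonicity argument on the nested sublevel sets.

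Next I would identify the strong slope with the subdifferential slope. For $h\in\Gamma_{\rho}(\X)$ the function $h+\frac{\rho}{2}\|\cdot\|^2$ is convex, and, by the globalisation property recalled after \Cref{def:eps:prox_subdiff} together with {\cite[Remark 12.(ii)]{bolte2010characterizations}}, the proximal subdifferential $\partial_{\rho/2}h$ coincides with the Fréchet subdifferential and the strong slope admits the representation
\[
(\forall x\in[0<h<r_0])\qquad |\nabla h|(x)=\dist(\partial_{\rho/2}h(x),0).
\]
Substituting this into the previous display produces exactly
\[
\inf_{x\in[0<h<r_0]}\dist(\partial_{\rho/2}h(x),0)=\inf_{0\le r<r_0}\inf_{x\in[r<h<r_0]}\frac{h(x)-r}{\dist(x,[h\le r])},
\]
which is the assertion of the lemma.

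I expect the main obstacle to be the slope representation $|\nabla h|(x)=\dist(\partial_{\rho/2}h(x),0)$: while for convex functions this is classical, transferring it to the weakly convex class requires exploiting that $\partial_{\rho/2}h$ is a genuine, globally valid subdifferential and that the smooth quadratic perturbation $\frac{\rho}{2}\|\cdot\|^2$ shifts the slope and the subdifferential by the same amount, so that the convex identity is preserved under the shift. A secondary point is confirming that {\cite[Theorem 2.1]{aze2004characterizations}} applies verbatim with the upper truncation $h<r_0$, rather than needing a separate argument adapted to the truncated region.
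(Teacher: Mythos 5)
Your proposal is correct and follows essentially the same route as the paper, which proves this lemma precisely by combining \cite[Theorem 2.1]{aze2004characterizations} with \cite[Remark 12.(ii)]{bolte2010characterizations}; you have simply made explicit the intermediate role of the strong slope, the collapse of the outer infimum at $r=0$, and the identification $|\nabla h|(x)=\dist(\partial_{\rho/2}h(x),0)$ for weakly convex $h$, all of which are sound.
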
}
The following theorem is inspired by {\cite[Theorem 5.2]{studniarski1999weak}},  originally in the finite-dimensional setting, which can be easily adapted to the general Hilbert space setting.
\begin{theorem} \label{thm:stud_ward}
Let $\X$ be a Hilbert space and $h:\X\rightarrow\mathbb{R}$ be a continuous $\rho$-weakly convex function, $ h\geq 0 $,  $x^{*}\in S = \argmin_{x\in\X} h(x) = [h\leq 0]\neq\emptyset$.  Let $\delta_{0}>0$ and $r_0>0$. If there exists $\mu>0$ such that for every $x\in \overline{B}(x^{*},\delta_{0})\cap[0<h\leq r_0]$
\begin{equation}
    \label{eq:kl1}
 \left(\forall  z\in\partial_{\rho} h(x)\right) \qquad  \|z\|\ge \mu 
\end{equation}
then there exists $\eta>0$ such that for every $ x\in {B}(x^{*},\eta)\cap[0<h<r_0]$
\begin{equation}
    \label{eq:sh1}
    \ \ h(x)-h(x^{*})\ge \mu\ \dist(x,{S})
\end{equation}
\end{theorem}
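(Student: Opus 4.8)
The plan is to convert the lower bound on the subgradient norms into the error bound \eqref{eq:sh1} by means of Ekeland's variational principle. Since $x^{*}\in S$ and $\inf_{x\in\X}h=0$, we have $h(x^{*})=0$, so \eqref{eq:sh1} is exactly $h(x)\ge\mu\,\dist(x,S)$. Because $h$ is continuous at $x^{*}$ with $h(x^{*})=0$, I would first fix a radius $\eta>0$, small enough that $\eta<\delta_{0}/2$ and that $h(x)<\min\{r_0,\ \mu\delta_{0}/4\}$ for every $x\in B(x^{*},\eta)$. This continuity step is what keeps the otherwise global descent argument inside the region $\overline B(x^{*},\delta_{0})\cap[0<h\le r_0]$ on which the hypothesis \eqref{eq:kl1} is available.

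Next, fix $x\in B(x^{*},\eta)\cap[0<h<r_0]$ and apply Ekeland's variational principle to the proper, lower semicontinuous, nonnegative function $h$ with $\varepsilon=h(x)$ (so that $h(x)=\inf h+\varepsilon$) and a parameter $\lambda$ chosen slightly larger than $h(x)/\mu$. This yields $\bar x$ with $h(\bar x)\le h(x)$, $\|\bar x-x\|\le\lambda$, and such that $\bar x$ globally minimises over $\X$ the map $y\mapsto h(y)+\tfrac{h(x)}{\lambda}\|y-\bar x\|$. Arguing by contradiction, suppose $\bar x\notin S$, i.e. $h(\bar x)>0$. The choice of $\eta$ forces $\lambda<\delta_{0}/4$, so that $\|\bar x-x^{*}\|\le\lambda+\eta<\delta_{0}$ and $0<h(\bar x)\le h(x)<r_0$; thus $\bar x$ lies in the region covered by \eqref{eq:kl1}.

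I would then read off a proximal subgradient of controlled norm at $\bar x$. Since $\bar x$ is a global minimiser of $F:=h+\tfrac{h(x)}{\lambda}\|\cdot-\bar x\|$, we have $0\in\partial_{\rho/2}F(\bar x)$ trivially. Passing to the convex function $\tilde h:=h+\tfrac{\rho}{2}\|\cdot\|^{2}$, the relation $0\in\partial_{\rho/2}F(\bar x)$ is equivalent to $\rho\bar x\in\partial\bigl(\tilde h+\tfrac{h(x)}{\lambda}\|\cdot-\bar x\|\bigr)(\bar x)$, and the Moreau--Rockafellar sum rule (applicable because the norm term is finite and continuous) gives $\rho\bar x\in\partial\tilde h(\bar x)+\overline B(0,h(x)/\lambda)$. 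Undoing the reformulation produces $z\in\partial_{\rho/2}h(\bar x)$ with $\|z\|\le h(x)/\lambda$. As $\rho/2\le\rho$ yields $\partial_{\rho/2}h(\bar x)\subseteq\partial_{\rho}h(\bar x)$, this $z$ also lies in $\partial_{\rho}h(\bar x)$, so \eqref{eq:kl1} forces $\|z\|\ge\mu$. Combining, $\mu\le h(x)/\lambda$, i.e. $\lambda\le h(x)/\mu$, contradicting $\lambda>h(x)/\mu$. Hence $\bar x\in S$, whence $\dist(x,S)\le\|x-\bar x\|\le\lambda$; letting $\lambda\downarrow h(x)/\mu$ gives $\dist(x,S)\le h(x)/\mu$, which is \eqref{eq:sh1}.

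The step I expect to be the main obstacle is the localisation: guaranteeing that the Ekeland point $\bar x$ remains inside $\overline B(x^{*},\delta_{0})\cap[0<h\le r_0]$, since \eqref{eq:kl1} is only assumed there. This is precisely where continuity of $h$ at $x^{*}$ is used—shrinking $\eta$ makes $h(x)$, and hence the displacement $\lambda\approx h(x)/\mu$, as small as needed, so $\bar x$ cannot escape the ball. A secondary technical point is the subdifferential sum rule in the weakly convex setting, which I would handle exactly as above by reducing to the convex function $\tilde h$ and invoking classical convex calculus; the inclusion $\partial_{\rho/2}h\subseteq\partial_{\rho}h$ then bridges the extracted subgradient to the form of \eqref{eq:kl1}.
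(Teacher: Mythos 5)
Your argument is correct, and it is worth noting that the paper itself supplies no proof of this statement: it only remarks that the result is ``inspired by'' Theorem 5.2 of Studniarski and Ward and ``can be easily adapted'' to Hilbert spaces. Your Ekeland-based proof effectively carries out that adaptation in a self-contained way, and every step checks out: the continuity of $h$ at $x^{*}$ with $h(x^{*})=0$ lets you shrink $\eta$ so that the Ekeland point $\bar x$ (displaced by at most $\lambda<\delta_{0}/4$ from $x\in B(x^{*},\eta)$, with $h(\bar x)\le h(x)<r_0$) stays inside $\overline B(x^{*},\delta_{0})\cap[0<h\le r_0]$ where \eqref{eq:kl1} is available; the passage $0\in\partial_{\rho/2}\bigl(h+\tfrac{h(x)}{\lambda}\|\cdot-\bar x\|\bigr)(\bar x)\Leftrightarrow\rho\bar x\in\partial\bigl(\tilde h+\tfrac{h(x)}{\lambda}\|\cdot-\bar x\|\bigr)(\bar x)$ is a correct completion-of-the-square identity consistent with the paper's global Definition \ref{def:eps:prox_subdiff}; the Moreau--Rockafellar sum rule applies because the norm perturbation is everywhere continuous; and the inclusion $\partial_{\rho/2}h\subseteq\partial_{\rho}h$ legitimately bridges the extracted subgradient to the hypothesis as stated with $\partial_{\rho}h$. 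The only difference from the cited source is methodological packaging: Studniarski--Ward derive such sufficiency results from a decrease principle of proximal analysis (itself Ekeland-based and stated in finite dimensions), whereas you invoke Ekeland directly and exploit the weakly convex structure to reduce the subdifferential calculus to classical convex calculus on $\tilde h=h+\tfrac{\rho}{2}\|\cdot\|^{2}$; this buys a proof that is dimension-free and uses only tools already present in the paper. A cosmetic remark: the phrase ``the choice of $\eta$ forces $\lambda<\delta_{0}/4$'' should read that it \emph{permits} choosing $\lambda\in\bigl(h(x)/\mu,\ \delta_{0}/4\bigr)$, which is nonempty precisely because $h(x)<\mu\delta_{0}/4$ on $B(x^{*},\eta)$; with that wording fixed, the proof is complete.
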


\begin{proof}[Proof of \Cref{cor:sh_versus_kl}]

\textbf{($(i) \implies (ii)$) } We assume \eqref{eq:prop:sh_loc} holds for all ${x\in  B(x^*, \delta_1)\cap [ 0 < h(x) < r_1]}$. For any $0 < \delta < \delta_1$ we consider $\overline{h} = h + \iota_{\overline{B}(x^*,\delta)}$. By \Cref{thm:sh_versus_kl}, we have
   \begin{equation}
     \inf_{x \in [0< \overline h<r_1]}\dist(\partial_{\rho/2} \overline h(x),0) = \inf_{0\leq r < r_1} \inf_{x\in[r <\overline  h< r_1]} \frac{ \overline h(x) - r}{\dist(x, [\overline h\leq r])} 
    \end{equation}
For the RHS, we have
  \begin{equation}
   \operatorname{RHS} = \inf_{0\leq r < r_1} \inf_{x\in[r <  h< r_1] \cap \overline{B}(x^*,\delta)} \frac{  h(x) - r}{\dist(x, [ h\leq r])}
   \geq  \inf_{0\leq r < r_1} \inf_{x\in[r <  h< r_1] \cap {B}(x^*,\delta_1)} \frac{  h(x) - r}{\dist(x, [ h\leq r])}
    \end{equation}
    because $\overline{B}(x^*, \delta) \subset{B}(x^*, \delta_1) $.
On the other side, by {\cite[Theorem 2]{bednarczuk2022calculus}}, the LHS is equivalent to 
 \begin{equation}
   \operatorname{ LHS} =  \inf_{x \in [0< h<r_1]}\dist(\partial_{\rho/2}  h(x) + N_{\overline{B} (x^*,\delta)}x,0).
    \end{equation}
where $N_{\overline{B} (x^*,\delta)}$ is the Normal Cone in the sense of convex analysis. For any $0<\delta_2 < \delta$ and for any $0<r_2<r_1$ we have
\begin{equation}
   \operatorname{ LHS} \leq   \inf_{x \in [0< h\leq r_2] \cap \overline B(x^*,\delta_2)}\dist(\partial_{\rho/2}  h(x)+ N_{\overline{B}(x^*,\delta)} x ,0) = \inf_{x \in [0< h\leq r_2] \cap \overline B(x^*,\delta_2)}\dist(\partial_{\rho/2}  h(x),0)
    \end{equation}
    where the last equality stems from the fact that  $N_{\overline{B}(x^*,\delta)}x=0$ for every $x \in B(x^*,\delta)$. In conclusion, we obtain
    \begin{equation}
  \inf_{x \in [0< h\leq r_2] \cap \overline B(x^*,\delta_2)}\dist(\partial_{\rho/2}  h(x),0) \geq \inf_{0\leq r < r_1} \inf_{x\in[r <  h<  r_1] \cap {B}(x^*,\delta_1)} \frac{  h(x) - r}{\dist(x, [ h\leq r])}
    \end{equation}   
and by \eqref{eq:prop:sh_loc} we get
\begin{equation}
\inf_{0\leq r < r_1} \inf_{x\in[r <  h<  r_1] \cap {B}(x^*,\delta_1)} \frac{  h(x) - r}{\dist(x, [ h\leq r])} \geq \mu>0,
    \end{equation} 
which implies
  $\inf_{x \in [0< h\leq  r_2] \cap \overline B(x^*,\delta_2)}\dist(\partial_{\rho/2}  h(x),0) \geq \mu >0$,
which is \eqref{eq:prop:kl_loc} with function $\varphi = \frac{1}{\mu}\operatorname{Id}$.\\

\textbf{($(i) \implies (ii)$) } The second part of the assertion follows from \Cref{thm:stud_ward}.
\end{proof}

\section{Proof of \Cref{lem:inthetube:const}}\label{sec:lem:inthetube:const}

\begin{proof}
{
Take an arbitrary but fixed $t\in\mathbb{N}$ and let $\operatorname{dist}(x_{t+1},S)>0$.
We take any $\overline{x}\in S$  and  apply \eqref{eq:eps:FB_estimate} from \Cref{prop:eps:FB estimate g-convex} for $x=\overline{x}$, which yields the following inequality:
\begin{equation}\label{eq:eps:FB_estimate:2}
\begin{aligned}
    (f+g)(\overline{x})  - (f+g)&(x_{t+1}) \geq \innerprod{\frac{x_t - x_{t+1} }{\alpha }}{\overline{x}-x_{t+1}} \\
    &  - \frac{\rho}{2}\|\overline{x}-x_{t+1}\|^2 - \frac{L_g}{2}\|x_{t}- x_{t+1}\|^2 - \varepsilon  - \sqrt{\frac{2\varepsilon }{\alpha }}\| \overline{x} - x_{t+1}\|.\\
    \end{aligned}
\end{equation}
{ By using the identity
\begin{equation*}
    \left(\forall (w,y,z)\in\X^3\right) \qquad   \innerprod{w-y}{y-z} = \frac{1}{2}\|w-z\|^2 - \frac{1}{2}\|w-y\|^2 - \frac{1}{2}\|y-z\|^2, 
\end{equation*}
which follows from \cite[Lemma 2.12]{Bauschke2017}, we obtain}
\begin{equation}\label{eq:ineq_without_eta}
\begin{aligned}
&\left(f+g\right)\left(\overline{x}\right)-\left(f+g\right)\left(x_{t+1}\right) \geq  \frac{1}{2\alpha }\left( -\|x_t-\overline{x}\|^2 +\|x_{t+1}- \overline{x}\|^2  + \| x_t - x_{t+1}\|^2\right) \\
    & \qquad-\frac{\rho}{2}\left\Vert \overline{x} -x_{t+1}\right\Vert ^{2}-\frac{L_{g}}{2}\left\Vert x_{t}-x_{t+1}\right\Vert ^{2} - \varepsilon  - \sqrt{\frac{2\varepsilon }{\alpha }}\left\Vert \overline{x}-x_{t+1}\right\Vert  \\
    & \qquad= \left(\frac{1}{2\alpha } - \frac{L_g}{2}\right)\| x_t - x_{t+1}\|^2 +\left(\frac{1}{2\alpha } -\frac{\rho}{2}\right)\left\Vert \overline{x} -x_{t+1}\right\Vert ^{2}-\frac{1}{2\alpha }\|x_t-\overline{x}\|^2 - \varepsilon   - \sqrt{\frac{2\varepsilon }{\alpha }}\left\Vert \overline{x}-x_{t+1}\right\Vert  .\\
    \end{aligned}
    \end{equation}
Using Young's Inequality, we have
\begin{equation}\label{eq:ineqeta}
     - \sqrt{\frac{2\varepsilon }{\alpha }}\left\Vert \overline{x}-x_{t+1}\right\Vert  \geq  - \frac{1 }{2 }\|\overline{x}- x_{t+1}\|^2 - \frac{\varepsilon }{\alpha }.
  \end{equation}
Plugging \eqref{eq:ineqeta} back into \eqref{eq:ineq_without_eta} yields
\begin{equation}\label{eq:ineq_with_eta}
\begin{aligned}
&\qquad\qquad \left(f+g\right)\left(\overline{x}\right)-\left(f+g\right)\left(x_{t+1}\right) \geq \\
    &  \left(\frac{1}{2\alpha } - \frac{L_g}{2}\right)\| x_t - x_{t+1}\|^2 +\left(\frac{1}{2\alpha } -\frac{\rho+1}{2} \right)\left\Vert \overline{x} -x_{t+1}\right\Vert ^{2}
    -\frac{1}{2\alpha }\|x_t-\overline{x}\|^2 - \frac{\alpha+1}{\alpha}\varepsilon  .\\
    \end{aligned}
    \end{equation}
We then estimate the LHS of \eqref{eq:ineq_with_eta} by the assumption of sharpness on $f+g$
 \begin{align*}
   -\mu \dist(x_{t+1},S) &\geq   \left(\frac{1}{2\alpha } -\frac{\rho+1}{2} \right) \left\Vert \overline{x} -x_{t+1}\right\Vert ^{2} - \frac{1}{2\alpha }\|x_t-\overline{x}\|^2  +\left(\frac{1}{2\alpha } - \frac{L_g}{2}\right)\| x_t - x_{t+1}\|^2 - \frac{\alpha+1}{\alpha}\varepsilon.
 \end{align*}   
By using \eqref{eq:cond:alpha:1} from \Cref{assu:conditions} and the fact that $\overline{x}\in S$ can be taken arbitrarily, 
we choose $\overline{x}\in S$ so that 
 $$\|x_t - \overline{x}\|^2 \geq \dist^2(x_{t},S)\qquad \text{and} \qquad \|x_{t+1}- \overline{x}\|^2 \geq \dist^2(x_{t+1},S)$$
 which yields
 \begin{align*}
   -\mu \dist(x_{t+1},S) &\geq   \left(\frac{1}{2\alpha } -\frac{\rho+1}{2} \right) \dist^2(x_{t+1},S) - \frac{1}{2\alpha }\dist^2(x_{t},S) - \frac{\alpha+1}{\alpha}\varepsilon
 \end{align*}   
  \begin{equation}
  \label{eq:distance with eta and eps}
  \begin{split}
      \iff \dist^2(x_t,S) \geq \left(1 -\alpha  \rho - \alpha  \right) \dist^2(x_{t+1},S) +2\alpha  \mu \dist (x_{t+1},S) 
        -2(\alpha+1)\varepsilon
  \end{split}
 \end{equation}

Let now $t=t_0$. By hypothesis \eqref{eq:dist:xtplus}, we have 
\begin{equation}\label{eq:ineq:discriminant}
\begin{aligned}
\left(1-\alpha \rho-\alpha \right)\text{dist}^{2}\left(x_{t+1},S\right)+2\alpha \mu\text{dist}\left(x_{t+1},S\right)-2(\alpha+1)\varepsilon \leq (E^+)^2\\
\end{aligned}
\end{equation}
which is a second degree inequality with respect  $\dist(x_{t+1},S)$.
he discriminant has the form 
\begin{equation}
    \Delta =\left(\frac{\mu+\left(1-\alpha\left(\rho+1\right)\right)\sqrt{\mu^{2}-2\varepsilon\left(\rho+1\right)\frac{\alpha+1}{\alpha}} }{1+\rho}\right)^{2}  \geq0.
\end{equation}
Since $\Delta\geq 0$, the solutions are within the following range
\begin{align}\label{eq:discriminant}
\frac{-\alpha \mu-\sqrt{\Delta}}{1-\alpha \rho-\alpha } & \leq \dist \left(x_{t+1},S\right)\leq\frac{-\alpha \mu+\sqrt{\Delta}}{1-\alpha \rho-\alpha }  =\frac{\mu+\sqrt{\mu^{2}-2\varepsilon\left(\rho+1\right)\frac{\alpha+1}{\alpha}} }{1+\rho} = E^+
\end{align}
In conclusion, since the distance from a set is a non-negative value, we obtain
\begin{equation}
0\leq \dist \left(x_{t+1},S\right)\leq
E^+.
\end{equation}
which by induction proves \eqref{eq:dist:xtplus}. 
The same reasoning can be applied to obtain \eqref{eq:dist:xtminus}. 
}
\end{proof}

\section{Proof of \Cref{prop:estimation with Et}}\label{sec:prop:estimation with Et}
\begin{proof}[Proof of \Cref{prop:estimation with Et}]
{ The quantities $E^+$ and $E^-$ are roots of the quadratic equation
\begin{equation}\label{eq:equation_Et}
    (E^-)^2 = (1-\alpha \rho -\alpha)(E^-)^2  +2\alpha \mu E^- -2(\alpha+1) \varepsilon .
\end{equation}
}
 Subtracting \eqref{eq:equation_Et} from \eqref{eq:distance with eta and eps},
 we obtain the following inequality that holds for every $t\in\N$
 \begin{equation}
\begin{aligned}
    &\operatorname{dist}^2(x_{t },S)-(E^-)^2\geq \\
    &  (1-\alpha \rho-\alpha)(\operatorname{dist}^2(x_{t +1},S)-(E^-)^2) +2\alpha \mu (\operatorname{dist}(x_{t +1},S)-E^-)+(1-\alpha L_g) \Vert x_t -x_{t+1}\Vert^2\\
    &= (1-\alpha \rho -\alpha+ \frac{2\alpha \mu}{\operatorname{dist}(x_{t +1},S)+ E^-})\left(\operatorname{dist}^2(x_{t +1},S)-(E^-)^2\right)+(1-\alpha L_g) \Vert x_t -x_{t+1}\Vert^2.
    \end{aligned}\end{equation}
    Then, 
    \begin{align} \label{eq:eps distance square estimate with xi}
    \operatorname{dist}^2(x_{t },S)-(E^-)^2 \geq \zeta_{{t }+1} \left(\operatorname{dist}^2(x_{t +1},S)-(E^-)^2\right)+(1-\alpha L_g) \Vert x_t -x_{t+1}\Vert^2,
\end{align}
where 
${\zeta_{t +1} := 1-\alpha \rho -\alpha+ \dfrac{2\alpha \mu}{\operatorname{dist}(x_{t +1},S)+ E^-}  \geq 0 }$
is  non-negative  by \Cref{assu:conditions}.
By dropping the term $(1-\alpha L_g) \Vert x_t -x_{t+1}\Vert^2$ (positive by \eqref{eq:cond:alpha:1}), we have
\begin{align} \label{eq:eps distance square estimate with xi 2}
    \operatorname{dist}^2(x_{t },S)-(E^-)^2 \geq \zeta_{{t }+1} \left(\operatorname{dist}^2(x_{t +1},S)-(E^-)^2\right).
\end{align}
Now we show that at iteration $t\geq t_0$, we have $\zeta_{t+1}\geq 1$. Notice that  $\zeta_{t+1}\geq 1$  whenever 
    $\frac{2\alpha \mu}{\operatorname{dist}(x_{t +1},S)+ E^-} \geq \alpha \rho +\alpha.$
    Equivalently,
    \begin{align}
    \operatorname{dist}(x_{t +1},S) & \leq \frac{2 \mu}{\rho + 1} - E^-  =\frac{\mu +\sqrt{\mu^{2}-2\varepsilon(\rho+1)\frac{\alpha+1}{\alpha} }}{\rho+1} = E^+.
\end{align}
{By assumption, there exists $t_0$ such that $\dist(x_{t_0},S)\leq E^+$ and, by \Cref{lem:inthetube:const}, this implies that \linebreak ${\dist(x_{t+1},S)\leq E^+}$. In conclusion,   for every $t\geq t_0$ the condition $\zeta_{t+1} \geq 1$ holds.
}
\end{proof}

\section{Proof of sharpness for $\bar{f}$ in \Cref{rmk: prox form of x2 -1}} \label{app:proof_of_lemma_sharp}
\begin{proof}
 For  $x\in\X$, the distance to the sphere $\mathcal{S}$  is given as ${x}\mapsto{|\|x\|-1 |}$.
     We show that for $\mu=1$ we have
\begin{equation*}
 (\forall x \in \X)\qquad    |\|x\|^2 - 1| \geq \mu \dist(x,S). 
    \end{equation*}
If $x=0$, then $\dist(x,\mathcal{S}) = 1$ and the inequality is satisfied for $\mu=1$. Otherwise, for every $x\in\X\setminus 0$ we have
\begin{equation*}
\min_{s\in \mathcal{S}} \|x-s\|  = 
    \|x- \frac{x}{\|x\|}\| = \| x \left(1 - \frac{1}{\|x\|}\right)\| = \|x\| \left| 1- \frac{1}{\|x\|}\right| = \|x\| \frac{1}{\|x\|} \left | \|x\| - 1\right| = |\|x\|-1|.
 \end{equation*}
Consider $ \fonc{\tilde f}{\R}{\R}$ defined as ${ \sigma \mapsto |\sigma^2-1|}$, \emph{i.e.\ }  the function from \Cref{ex: weak convexity} with  $(a,b)=(-1,1)$. The following inequality is satisfied for every $\delta \leq 1$ (as illustrated in \Cref{fig:comparisons})
\begin{equation*}
(\forall \sigma \in\R)  \qquad \delta | |\sigma|-1| = \delta \min\{ |\sigma - 1|,  |\sigma +1| \}\leq  |\sigma^2 -1 |.
\end{equation*}
where the last inequality holds by the sharpness of function $\tilde f$ (see \Cref{ex:sharpness}). Choosing $\sigma = \|x\|$ we obtain
\begin{equation*}
 (\forall x \in \X\setminus 0)\qquad   \delta| \|x\| - 1| \leq   |\|x\|^2 - 1|. 
\end{equation*}
Thus,   $\mu\dist(x,\mathcal{S})\leq   f(x)$, for  $\mu \in(0, 1]$, for $x \in \X$
which implies the global sharpness of  $f$ with  $\mu=1$.
\end{proof}

\section{Inexact Proximal Computation}\label{app:inexprox}
{


For any strongly convex function $h$ and its unique minimiser $\overline{x}$,  
\begin{equation}
    \operatorname{lev}_{\leq \varepsilon + h(\overline{x})} h : = \{x\in\X\,|\, h(x) \leq \varepsilon + h(\overline{x})\}
\end{equation}
is the set of  $\varepsilon$-solutions of $h$.
\sloppy If, for some $y\in\X$, and $f$ $\rho$-weakly convex with $\alpha<\frac{1}{\rho}$, for every $x\in\X$, function $h$ is defined as 
 ${h(x): = f(x) + \frac{1}{2\alpha}\|x-y\|^2}$,
then
$
    \varepsilon\operatorname{-prox}_{\alpha f}(y) =  \operatorname{lev}_{\leq \varepsilon + h(\overline{x})}h$,
where $\overline{x}$ is the exact proximal point of $f$ at $y$.

\begin{lemma}
Let $\fonc{h,\,\widehat{h}}{\X}{(-\infty,+\infty]}$ be two strongly convex functions with minimisers $\overline{x}\in\X$ and $\widehat{x}\in\X$ respectively. Assume that for every $\varepsilon\geq 0$ 
\begin{equation}\label{eq:ineq}
    (\forall x \in \X)\qquad \qquad h(x) \leq \widehat{h}(x) \leq h(x) + \varepsilon.
\end{equation}
Then, the unique minimiser $\widehat{x}$ of $\widehat{h}$ belongs to $ \operatorname{lev}_{\leq \varepsilon + h(\overline{x})}h$.\\
\end{lemma}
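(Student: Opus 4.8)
The plan is to observe that the claim reduces to a single chain of three inequalities, each of which is immediate from the hypotheses, so the proof is essentially bookkeeping with no genuine obstacle. Recall that by definition $\widehat{x}\in\operatorname{lev}_{\leq\varepsilon+h(\overline{x})}h$ is exactly the assertion $h(\widehat{x})\leq\varepsilon+h(\overline{x})$, so this single scalar inequality is all we must establish.

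First I would exploit the minimality of $\widehat{x}$ for $\widehat{h}$: since $\widehat{x}$ is the (unique) minimiser of $\widehat{h}$, it follows in particular that $\widehat{h}(\widehat{x})\leq\widehat{h}(\overline{x})$, by comparing with the feasible point $\overline{x}$. Next I would insert the two-sided sandwich \eqref{eq:ineq} at the two relevant points: applying the \emph{left} inequality $h\leq\widehat{h}$ at the point $\widehat{x}$ gives $h(\widehat{x})\leq\widehat{h}(\widehat{x})$, while applying the \emph{right} inequality $\widehat{h}\leq h+\varepsilon$ at the point $\overline{x}$ gives $\widehat{h}(\overline{x})\leq h(\overline{x})+\varepsilon$.

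Concatenating these three estimates in order yields
\begin{equation*}
h(\widehat{x})\ \leq\ \widehat{h}(\widehat{x})\ \leq\ \widehat{h}(\overline{x})\ \leq\ h(\overline{x})+\varepsilon,
\end{equation*}
so that $h(\widehat{x})\leq\varepsilon+h(\overline{x})$, which is precisely the membership $\widehat{x}\in\operatorname{lev}_{\leq\varepsilon+h(\overline{x})}h$ that was to be shown.

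There is really no hard part here: strong convexity of $h$ and $\widehat{h}$ is invoked only to guarantee that the minimisers $\overline{x}$ and $\widehat{x}$ exist and are unique, and no quantitative strong-convexity constant enters the argument. The only point worth being careful about is the direction in which each half of \eqref{eq:ineq} is applied — the lower bound must be used at $\widehat{x}$ and the upper bound at $\overline{x}$, not the reverse — since swapping them would not close the chain. Once the inequalities are lined up in the correct order, the conclusion is immediate.
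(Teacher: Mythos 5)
Your proof is correct and uses exactly the same three ingredients as the paper's (the left inequality of \eqref{eq:ineq} at $\widehat{x}$, the right inequality at $\overline{x}$, and the minimality $\widehat{h}(\widehat{x})\leq\widehat{h}(\overline{x})$); the only difference is that you chain them directly while the paper arranges them as a proof by contradiction, which is an immaterial repackaging. If anything, your direct version is cleaner.
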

\begin{proof}
By contradiction,  assume that $\widehat{x}\notin \operatorname{lev}_{\leq \varepsilon + h(\overline{x})}h$, \emph{i.e.\ }, $h(\widehat{x}) > \varepsilon  + h(\overline{x}).$ By assumption,  for $x=\widehat{x}$, equation \eqref{eq:ineq} reads as
$
    h(\widehat{x}) \leq \widehat{h}(\widehat{x}) \leq h(\widehat{x}) + \varepsilon
$
which combined with our hypothesis implies
$
    \varepsilon + h(\overline{x}) < \widehat{h}(\widehat{x}).
$
On the other side, for $x=\overline{x}$, equation \eqref{eq:ineq} reads as
$
    h(\overline{x}) \leq \widehat{h}(\overline{x}) \leq h(\overline{x}) + \varepsilon
$,
which implies $
     \widehat{h}(\overline{x}) \leq \varepsilon + h(\overline{x}).$ This induces
    $ \widehat{h}(\overline{x}) \leq \varepsilon + h(\overline{x}) < \widehat{h}(\widehat{x})$
which is a contradition because $\widehat{x}$ is the unique minimiser of $\widehat{h}$.
\end{proof}

\begin{example}
    Let 
     $(a_1,\dots,a_n), (b_1,\dots,b_n)\in\R^n$ be  pair of vectors. The function $\fonc{F}{\R}{\R}$ defined, for every $x\in\R^n$, as
$F(x) := \sum_{i=1}^n |(x-a_i)(x-b_i)|$ 
lacks an explicit formula for its proximal operator. Consider the function $\fonc{\widehat{F}}{\R^n}{\R}$ defined, for every $x\in\R^n$ as 
$\widehat{F}(x) = \sum_{i=1}^n \sqrt{((x-a_i)(x-b_i))^2 + \varepsilon_i^2} $
for given $\varepsilon_i>0$, $i=1,\dots,n$. It is  easy to see that for every $x\in\R^n$
\begin{equation}
    F(x) \leq \widehat{F}(x) \leq \sum_{i=1}^n \left(|((x-a_i)(x-b_i))| + \varepsilon_i\right) \leq F(x) + \sum_{i=1}^n \varepsilon_i.
\end{equation}
Functions $h:=F + \frac{1}{2\alpha}\|\cdot - y\|^2$ and $\widehat{h}: =\widehat{F}  + \frac{1}{2\alpha}\|\cdot - y\|^2$ satisfy \eqref{eq:ineq} with $\varepsilon=\sum_{i=1}^n\varepsilon_i$ and by solving the surrogate smooth and strongly convex problem
\begin{equation}\label{eq:surrogate_smooth_problem}
    \argmin_{x\in\R^n} \widehat F(x) + \frac{1}{2\alpha}\|x-y\|^2
\end{equation}
we obtain a point in $\varepsilon\operatorname{-prox}_{\alpha F}(y)$. In particular, solving \eqref{eq:surrogate_smooth_problem} corresponds to solving $n$ independent smooth and strongly convex 1-dimensional problems.
\end{example}}
\end{appendices}

\end{document}